\newtheorem{thm}{Theorem}[section]
\newtheorem{prob}{Problem}[section]
\newtheorem{lem}{Lemma}[section]
\newtheorem{conj}{Conjecture}[section]
\newtheorem{claim}{Claim}[section]
\newtheorem{definition}{Definition}[section]
\newtheorem{example}{Example}[section]
\begin{document}
\title{Tur\'an numbers for non-bipartite graphs and applications to spectral extremal problems
\footnote{Supported
by the National Natural Science Foundation of China (No. 12171066)
and the Major Natural Science Research Project
of Universities in Anhui Province
(Nos.\, 2022AH040151, 2023AH051589). Partially supported by the US National Science Foundation DMS-2245556.}}
\author{ {\bf Longfei Fang$^{a,b}$},
{\bf Michael Tait$^c$},
{\bf Mingqing Zhai$^b$}
\thanks{Corresponding author.
E-mail addresses: lffang@chzu.edu.cn (L. Fang),
michael.tait@villanova.edu (M. Tait),
mqzhai@chzu.edu.cn (M. Zhai).}
\\
\small $^{a}$ School of Mathematics,
East China University of Science and Technology, \\
\small  Shanghai 200237, China\\
\small $^{b}$ School of Mathematics and Finance, Chuzhou University, \\
\small  Chuzhou, Anhui 239012, China\\
\small $^{c}$ Department of Mathematics $\&$ Statistics,
Villanova University, United States of America
\\
}

\date{}
\maketitle
{\flushleft\large\bf Abstract}
Given a graph family $\mathcal{H}$
with $\min_{H\in \mathcal{H}}\chi(H)=r+1\geq 3$.
Let ${\rm ex}(n,\mathcal{H})$ and ${\rm spex}(n,\mathcal{H})$
be the maximum number of edges
and the maximum spectral radius of the adjacency matrix
over all $\mathcal{H}$-free graphs of order $n$, respectively.
Denote by ${\rm EX}(n,\mathcal{H})$ (resp. ${\rm SPEX}(n,\mathcal{H})$)
the set of extremal graphs with respect to ${\rm ex}(n,\mathcal{H})$
(resp. ${\rm spex}(n,\mathcal{H})$).

 In this paper, we use a decomposition family defined by Simonovits to give a characterization of which graph families $\mathcal{H}$ satisfy ${\rm ex}(n,\mathcal{H})<e(T_{n,r})+\lfloor \frac{n}{2r} \rfloor$. Furthermore, we completely determine
${\rm EX}\big(n,\mathbb{G}(F_1,\ldots,F_k)\big)$ for $n$ sufficiently large,
where $\mathbb{G}(F_1,\ldots,F_k)$ denotes a finite graph family
which consists of $k$ edge-disjoint $(r+1)$-chromatic
color-critical graphs $F_1,\ldots,F_k$. This result strengthens a theorem of Gy\H{o}ri, who settled the case that $F_1=\cdots =F_k = K_{r+1}$.

Wang, Kang and Xue 
proved that ${\rm SPEX}(n,H)\subseteq {\rm EX}(n,H)$
for $n$ sufficiently large and any graph $H$ with ${\rm ex}(n,H)=e(T_{n,r})+O(1)$. As an application of our first theorem, we show that ${\rm SPEX}(n,\mathcal{H})\subseteq {\rm EX}(n,\mathcal{H})$
for $n$ sufficiently large and any finite family $\mathcal{H}$ with ${\rm ex}(n,\mathcal{H})<e(T_{n,r})+\lfloor \frac{n}{2r}\rfloor$. As an application of our second theorem we completely determine ${\rm SPEX}\big(n,\mathbb{G}(F_1,\ldots,F_k)\big)$ for $n$ sufficiently large.

Finally, related problems
are proposed for further research.

\begin{flushleft}
\textbf{Keywords:} extremal graph; spectral radius;
edge-disjoint; color-critical graph
\end{flushleft}
\textbf{AMS Classification:} 05C35; 05C50

\section{Introduction}

A graph family $\mathcal{H}$ is said to be \emph{finite}
if $|H|$ is bounded above by a fixed constant for any member $H\in \mathcal{H}$.
We call a graph \emph{$\mathcal{H}$-free}
if it does not contain any copy of $H\in \mathcal{H}$ as a subgraph.
The \emph{Tur\'{a}n number} of $\mathcal{H}$,
denoted by ${\rm ex}(n,\mathcal{H})$,
is the maximum number of edges over all
$n$-vertex $\mathcal{H}$-free graphs.
Denote by ${\rm EX}(n,\mathcal{H})$
the set of extremal graphs
with respect to ${\rm ex}(n,\mathcal{H})$,
that is, $\mathcal{H}$-free graphs with $n$ vertices
and ${\rm ex}(n,\mathcal{H})$ edges.
If $\mathcal{H}=\{H\}$, we use ${\rm ex}(n,H)$
(resp. ${\rm EX}(n,H)$) instead of ${\rm ex}(n,\mathcal{H})$
(resp. ${\rm EX}(n,\mathcal{H})$) for simplicity.

Tur\'{a}n's famous theorem 
states that ${\rm EX}(n,K_{r+1})=\{T_{n,r}\}$,
where $T_{n,r}$ denotes the $r$-partite Tur\'{a}n graph on $n$ vertices.
Replacing $K_{r+1}$ with a finite graph family $\mathcal{H}$,
the famous Erd\H{o}s-Stone-Simonovits theorem \cite{Erdos-1966}
shows that
\begin{equation*}
{\rm ex}(n,\mathcal{H})=\Big(1+o(1)\Big)\Big(1-\frac{1}
{r}\Big){{n}\choose{2}},
\end{equation*}
where $r=\min_{H\in\mathcal{H}}(\chi(H)-1)$
and $\chi(H)$ denotes the chromatic number of $H$.
Since then, the study of the Tur\'{a}n number for various $\mathcal{H}$
has become one of the most important topics in extremal graph theory.
For more information on this topic,
we refer the readers to \cite{Erdos-1962,Erdos-1995,EG-1959,FS1975,
FG2015,FS2013, Keevash, Sidorenko, Simonovits-1974,Simonovits-1968,Yuan-2022,Yuan-2021}.

Let $F+H$ be the join and $F\cup H$ be the disjoint union,
of $F$ and $H$, respectively.
Let $M_{2k}$ be the disjoint union of $k$ edges.
As usual, we denote by $K_n,C_n,P_n,S_n,E_n$ and
$W_n$ the complete graph, the cycle, the path, the star,
the empty graph and the wheel graph on $n$ vertices, respectively.
Given a graph family  $\mathcal{H}$, set
$\phi(\mathcal{H})=\max\{|H|: H\in \mathcal{H}\}$
and $\chi(\mathcal{H})=\min\{\chi(H): H\in \mathcal{H}\}$.
The Erd\H{o}s-Stone-Simonovits theorem implies that
$\chi(\mathcal{H})=r+1$ if ${\rm ex}(n,\mathcal{H})=e(T_{n,r})+O(n)$.
In order to study the Tur\'{a}n number of $\mathcal{H}$,
we introduce the decomposition family $\mathcal{M}(\mathcal{H})$,
which was given by Simonovits. Simonovits first defined a similar family in \cite{Simonovits-1974} without using the name ``decomposition family". The name was given in \cite{Simonovits-1983} (see also \cite{Simonovits-2019}). We note that the family defined in these papers is slightly different than how we have defined it, but captures the same information. The family defined as we have was given by Liu \cite{LIU}.

\begin{definition}\label{def1.1}
Given a graph family $\mathcal{H}$ with $\chi(\mathcal{H})=r+1\geq 3$ and $\phi(\mathcal{H})=\phi$.
Let $\mathcal{M}(\mathcal{H})$ be
the family of minimal graphs $M$ satisfying that
there exists an $H\in \mathcal{H}$ such that
$H\subseteq (M\cup E_{\phi})+T_{(r-1)\phi,r-1}$.
We call $\mathcal{M}(\mathcal{H})$
the decomposition family of $\mathcal{H}$.
\end{definition}

We first use the decomposition family to give a characterization of which graph families $\mathcal{H}$ satisfy
$\chi(\mathcal{H})=r+1\geq 3$ and ${\rm ex}(n,\mathcal{H})<e(T_{n,r})+\lfloor \frac{n}{2r} \rfloor$,
and further obtain structural
properties of extremal graphs in $EX(n,\mathcal{H})$.

\begin{thm}\label{thm1.2}
Let $\mathcal{H}$ be a finite graph family
with $\chi(\mathcal{H})=r+1\geq 3$ and $\phi(\mathcal{H})=\phi$.
Then for $n$ sufficiently large,
the following statements are equivalent:\\
(i) ${\rm ex}(n,\mathcal{H})<e(T_{n,r})+\lfloor \frac{n}{2r} \rfloor$;\\
(ii) there exist two integers $\nu\leq\lfloor\frac{\phi}{2}\rfloor$
and $\Delta\leq \phi$ such that $M_{2\nu},
S_{\Delta+1}\in\mathcal{M}(\mathcal{H})$;\\
(iii) every graph in ${\rm EX}(n,\mathcal{H})$
is obtained from $T_{n,r}$ by adding and deleting $O(1)$ edges.
\end{thm}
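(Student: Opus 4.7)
The plan is to prove the three equivalences cyclically, via (iii) $\Rightarrow$ (i) $\Rightarrow$ (ii) $\Rightarrow$ (iii). The implication (iii) $\Rightarrow$ (i) is immediate: if every $G \in {\rm EX}(n,\mathcal{H})$ differs from $T_{n,r}$ by $O(1)$ edges, then ${\rm ex}(n,\mathcal{H}) = e(T_{n,r}) + O(1) < e(T_{n,r}) + \lfloor n/(2r) \rfloor$ for $n$ sufficiently large.

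For (i) $\Rightarrow$ (ii) I would argue the contrapositive. Suppose no $M_{2\nu}$ with $\nu \leq \lfloor \phi/2 \rfloor$ lies in $\mathcal{M}(\mathcal{H})$. Since every element of $\mathcal{M}(\mathcal{H})$ has at most $\phi$ vertices, this is equivalent to saying no matching at all belongs to $\mathcal{M}(\mathcal{H})$. Then the graph obtained from $T_{n,r}$ by inserting a matching of size $\lfloor n/(2r) \rfloor$ inside one part must be $\mathcal{H}$-free, because any embedded $H \in \mathcal{H}$ would use only matching edges in that part, forcing a matching into $\mathcal{M}(\mathcal{H})$ by minimality (Definition \ref{def1.1}), a contradiction. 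Symmetrically, if no $S_{\Delta+1}$ with $\Delta \leq \phi$ is in $\mathcal{M}(\mathcal{H})$ (equivalently, no star is in $\mathcal{M}(\mathcal{H})$ at all), then inserting a spanning star in one part of $T_{n,r}$ yields an $\mathcal{H}$-free graph with $\Omega(n/r)$ extra edges. Either construction beats $e(T_{n,r}) + \lfloor n/(2r) \rfloor$, violating (i).

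The core of the proof is (ii) $\Rightarrow$ (iii). Given $M_{2\nu}, S_{\Delta+1} \in \mathcal{M}(\mathcal{H})$, any $\mathcal{M}(\mathcal{H})$-free graph has matching number less than $\nu$ and maximum degree at most $\Delta$, hence at most $C := (\nu-1)(2\Delta-1) = O(1)$ edges. Let $G \in {\rm EX}(n,\mathcal{H})$; by Erd\H{o}s--Simonovits stability $G$ lies within $o(n^2)$ edges of $T_{n,r}$. Choose an $r$-partition $V_1,\ldots,V_r$ of $V(G)$ minimizing $\sum_i e(G[V_i])$. The crucial claim is that each $G[V_i]$ is $\mathcal{M}(\mathcal{H})$-free, so $\sum_i e(G[V_i]) \leq rC$. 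Combining this with $e(G) \geq e(T_{n,r})$ (since $T_{n,r}$ is $\mathcal{H}$-free) and $e(G) = \sum_{i<j} e_G(V_i,V_j) + \sum_i e(G[V_i]) \leq \sum_{i<j} |V_i||V_j| + rC$, a convexity calculation on $\sum_{i<j} |V_i||V_j|$ forces the part sizes to match the Tur\'an partition up to $O(1)$ and bounds the number of missing inter-part edges by $O(1)$. A final edge-swap argument based on the extremality of $G$ fixes the imbalance exactly, delivering $|E(G) \triangle E(T_{n,r})| = O(1)$.

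The main obstacle is the structural claim that each $G[V_i]$ is $\mathcal{M}(\mathcal{H})$-free. If some $M' \in \mathcal{M}(\mathcal{H})$ lived in $V_i$, then by stability and the optimality of the partition, all but $o(n)$ vertices in each $V_j$ ($j \neq i$) have near-full degree into every other part. One then greedily selects $\phi$ such \emph{typical} vertices in each $V_j$ complete to $V(M')$ and complete across parts, reconstructing the join configuration $(M' \cup E_\phi) + T_{(r-1)\phi, r-1}$ from Definition \ref{def1.1} inside $G$ and producing a copy of some $H \in \mathcal{H}$, the desired contradiction. This Simonovits-type embedding must be executed uniformly across the finite families $\mathcal{H}$ and $\mathcal{M}(\mathcal{H})$, which is where the technical care is required.
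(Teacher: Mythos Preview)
Your treatment of (iii) $\Rightarrow$ (i) and (i) $\Rightarrow$ (ii) is fine and matches the paper's argument (the paper phrases (i) $\Rightarrow$ (ii) directly rather than by contrapositive, via the two test graphs $T_{n,r}$ with a near-perfect matching in one class and $K_1 + T_{n-1,r}$, but the content is the same).

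The gap is in (ii) $\Rightarrow$ (iii), precisely at the ``main obstacle'' you flag. Your embedding argument for the claim that $G[V_i]$ is $\mathcal{M}(\mathcal{H})$-free requires, for every vertex $u \in V(M')$, that almost all of $V_j$ is adjacent to $u$; otherwise you cannot find $\phi$ common neighbours of $V(M')$ in $V_j$. Stability only gives that the \emph{total} number of missing cross-pairs is $o(n^2)$, so almost every vertex of $V_i$ is ``typical'' in this sense, but nothing prevents the particular $O(1)$ vertices of $V(M')$ from being among the $o(n)$ atypical ones. If some $u \in V(M')$ has low total degree (or, after the max-cut choice of partition, merely has its degree split roughly evenly among the $V_j$), then $u$ may miss $\Omega(n)$ vertices of a given $V_j$, and the greedy extension fails. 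So the claim ``each $G[V_i]$ is $\mathcal{M}(\mathcal{H})$-free'' is not provable by the embedding alone; you need to rule out atypical vertices first, and that cannot be done just from stability.

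The paper resolves this circularity by interposing an extra layer. It defines the small exceptional sets $L$ (low-degree vertices) and $R$ (vertices with large in-class degree), sets $\overline{V}_i = V_i \setminus (L \cup R)$, and runs your embedding argument only on $G[\overline{V}_i]$, where every vertex is guaranteed to have near-full degree into each other class. This yields $\nu(G[\overline{V}_i]) \le \nu - 1$ and $\Delta(G[\overline{V}_i]) \le \Delta - 1$, hence a large independent set $\overline{I}_i \subseteq \overline{V}_i$. Only then is the extremality of $G$ invoked: for a putative $u_0 \in L$ one re-wires $u_0$ to $\bigcup_{i<r} \overline{I}_i$, strictly gaining edges, and shows the new graph is still $\mathcal{H}$-free by replacing $u_0$ in any created copy of $H$ with a typical vertex of $\overline{V}_r$. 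This forces $L = \varnothing$, whence $R = \varnothing$, and now $\overline{V}_i = V_i$ so the bounds you wanted on $G[V_i]$ follow. Your final convexity/edge-swap step to pin down the part sizes is correct and agrees with the paper. In short, the missing ingredient in your plan is the two-stage argument: first restrict to the clean parts $\overline{V}_i$ to get the structural bounds, then use extremality to eliminate the exceptional vertices a posteriori.
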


For an edge subset $M\subseteq E(H)$,
we define $H-M=(V(H),E(H)\setminus M)$.
A graph $H$ is called \emph{color-critical}
if there exists an edge $e\in E(H)$
such that $\chi(H-\{e\})<\chi(H)$.
Simonovits \cite{Simonovits-1968} showed that ${\rm EX}(n,H)=\{T_{n,r}\}$
when $n$ is sufficiently large and $H$ is a color-critical graph with $\chi(H)=r+1$.
Let $edk_r(G)$ denote the maximum number of edge-disjoint copies of $K_r$ in $G$,
and $edk_r(n,m)$ denote the minimum of $edk_r(G)$
for the graphs $G$ of order $n$ and size $m$.
Gy\H{o}ri \cite{Gyori-1991} determined the maximum $k$
such that
$edk_{r+1}\big(n,e(T_{n,r})+k\big)=k$ for $r\geq2$ and $n$ sufficiently large.
Let $\Gamma_{k,r}$ be the family of graphs
which consist of $k$ edge-disjoint copies of $K_{r}$.
Then, Gy\H{o}ri's result implies that
${\rm ex}(n,\Gamma_{k,r+1})=e(T_{n,r})+k-1$ for $r\geq2$ and $n$ sufficiently large.

Let $F_1,\ldots, F_k$ be color-critical graphs with $\chi(F_i)=r+1\geq3$ for $i\in\{1,\ldots,k\}$ and let $\mathbb{G}(F_1,\ldots,F_k)$ denote the family of graphs
which consists of $k$ edge-disjoint
copies of $F_1,\ldots,F_k$.
As an application of Theorem \ref{thm1.2}, we determine
${\rm ex}\big(n,\mathbb{G}(F_1,\ldots,F_k)\big)$ and characterize all graphs in
${\rm EX}\big(n,\mathbb{G}(F_1,\ldots,F_k)\big)$ for $n$ sufficiently large.
Our result extends the Gy\H{o}ri's theorem on the Tur\'{a}n number of $\Gamma_{k,r+1}$ by taking $F_1=\cdots = F_k = K_{r+1}$.

\begin{thm}\label{thm1.4}
Let $F_1,\ldots, F_k$ be color-critical graphs with $\chi(F_i)=r+1\geq3$ for $i\in\{1,\ldots,k\}$. For sufficiently large $n$,
a graph $G\in{\rm EX}\big(n,\mathbb{G}(F_1,\ldots,F_k)\big)$
if and only if $G$ is obtained from $T_{n,r}$ by embedding $k-1$ edges within
its partite sets.
\end{thm}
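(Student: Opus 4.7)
The proof proceeds by applying Theorem \ref{thm1.2} to $\mathcal{H}=\mathbb{G}(F_1,\ldots,F_k)$ and then upgrading its conclusion to the exact structural description claimed. Note that $\mathbb{G}(F_1,\ldots,F_k)$ is finite with $\chi=r+1$ and $\phi=\sum_i|F_i|\ge(r+1)k$. \emph{Step 1 (decomposition family):} fix, for each $i$, a color-critical edge $e_i=u_iw_i$ of $F_i$ and an $r$-coloring of $F_i-e_i$ placing both $u_i,w_i$ in the first color class. To witness $M_{2k}\in\mathcal{M}(\mathbb{G}(F_1,\ldots,F_k))$, take $H$ to be the vertex-disjoint union of $F_1,\ldots,F_k$; combining the prescribed colorings places the $k$ critical edges as a vertex-disjoint matching inside the first class and embeds the rest of $H$ into the $r-1$ parts of $T_{(r-1)\phi,r-1}$. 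For $S_{k+1}\in\mathcal{M}(\mathbb{G}(F_1,\ldots,F_k))$, instead identify $u_1=\cdots=u_k=:v$ while keeping the other vertices of the $F_i$ pairwise disjoint; the copies remain edge-disjoint and the critical edges form the star $\{vw_i:1\le i\le k\}$. Minimality of both witnesses reduces to one pigeonhole: in any embedding of some $H\in\mathbb{G}(F_1,\ldots,F_k)$ into $(M\cup E_\phi)+T_{(r-1)\phi,r-1}$, every copy of $F_i$ is $(r+1)$-chromatic and therefore contributes at least one monochromatic edge lying in $M$, and these contributions are pairwise distinct by edge-disjointness, so $|E(M)|\ge k$. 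With $\nu=\Delta=k\le\phi/2$, Theorem \ref{thm1.2} then gives $\mathrm{ex}(n,\mathbb{G}(F_1,\ldots,F_k))<e(T_{n,r})+\lfloor n/(2r)\rfloor$ and that every extremal graph differs from $T_{n,r}$ by adding and deleting $O(1)$ edges.

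\emph{Step 2 (lower bound).} Let $G^*$ be $T_{n,r}$ with any $k-1$ edges inserted inside a fixed partite set. Since $T_{n,r}$ is $r$-partite while each $F_i$ is $(r+1)$-chromatic, every copy of every $F_i$ in $G^*$ uses at least one of the $k-1$ inserted edges; hence $k$ edge-disjoint copies of $F_1,\ldots,F_k$ would require $k$ such edges, which is impossible. Therefore $G^*$ is $\mathbb{G}(F_1,\ldots,F_k)$-free and $\mathrm{ex}(n,\mathbb{G}(F_1,\ldots,F_k))\ge e(T_{n,r})+k-1$.

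\emph{Step 3 (exact upper bound and characterization --- main obstacle).} Let $G$ be extremal. By conclusion (iii) of Theorem \ref{thm1.2}, $V(G)$ admits a partition $V_1,\ldots,V_r$ with $G=T_{n,r}+X-Y$, where $X$ is the set of within-part edges of $G$, $Y$ the set of missing between-part edges, and $|X|+|Y|=O(1)$. Since $e(G)\le e(T_{n,r})+|X|-|Y|$ with equality iff the partition is balanced, the lower bound forces both balance of the partition and $|X|\ge|Y|+k-1$; it remains to rule out $|X|\ge k$. Suppose to the contrary that $|X|\ge k$ and pick distinct within-part edges $e_1,\ldots,e_k\in X$. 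I would construct edge-disjoint copies $F_1',\ldots,F_k'$ greedily: for each $i$ in turn, map a color-critical edge of $F_i$ onto $e_i$ and extend using the $r$-coloring of $F_i-e_i$ into the partite sets of $G$. Because $n$ is large while only $O(1)$ vertices are forbidden at each step (those on non-edges in $Y$ or already used in earlier copies $F_j'$ with $j<i$), there are $\Omega(n)$ admissible candidates in the required partite set, so the extension always succeeds. Keeping the fresh extension vertices of $F_i'$ disjoint from those of the earlier copies makes the $k$ resulting copies pairwise edge-disjoint, producing a member of $\mathbb{G}(F_1,\ldots,F_k)$ in $G$, a contradiction. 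Therefore $|X|=k-1$ and $|Y|=0$, so $G$ is exactly $T_{n,r}$ together with $k-1$ within-part edges, as claimed. The main obstacle is this greedy edge-disjoint embedding: it must simultaneously cope with the $O(1)$ missing edges in $Y$, with possible vertex overlaps among $e_1,\ldots,e_k$, and with arbitrary color-critical graphs $F_i$ of differing shapes.
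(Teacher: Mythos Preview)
Your proof is correct and reaches the same conclusion as the paper's, but the two arguments differ in how they invoke Theorem~\ref{thm1.2}. The paper first proves the crude upper bound $e(G)<e(T_{n,r})+\sum_j e(F_j)$ by a greedy removal argument based on Simonovits' Lemma~\ref{lem4.1} (if $e(G)\ge e(T_{n,r})+\sum_j e(F_j)$, peel off copies of $F_1,F_2,\ldots$ one at a time until a contradiction), and then applies Theorem~\ref{thm1.2} via condition~(i). You instead establish condition~(ii) directly, by exhibiting explicit members of $\mathbb{G}(F_1,\ldots,F_k)$ that witness $M_{2k}$ and $S_{k+1}$ in the decomposition family, together with the pigeonhole lower bound $|E(M)|\ge k$ for minimality. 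Your route is self-contained (it does not appeal to Lemma~\ref{lem4.1}) and yields as a byproduct the exact matching and star parameters in $\mathcal{M}(\mathbb{G}(F_1,\ldots,F_k))$; the paper's route is shorter to state but imports the color-critical extremal theorem.

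Steps~2 and~3 are essentially the paper's Lemma~\ref{lem4.2} and the argument for $e(G_{in})\le k-1$. The paper carries out Step~3 by partitioning each $V_i\setminus\big(V(G_{in})\cup V(G_{cr})\big)$ into $k$ blocks of size $\beta$ and placing $F_j$ on the $j$th block together with the endpoints of $e_j$; your greedy version achieves the same edge-disjointness by choosing all extension vertices fresh and away from endpoints of $Y$. One small point: once Theorem~\ref{thm1.2}(iii) is applied, the underlying complete $r$-partite graph is already $T_{n,r}$, so the partition is balanced automatically and $e(G)=e(T_{n,r})+|X|-|Y|$ holds with equality; your clause ``with equality iff the partition is balanced'' is superfluous (though harmless).
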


We use Theorems \ref{thm1.2} and \ref{thm1.4} to study spectral extremal graph theory problems. Let $A(G)$ be the adjacency matrix
and $\rho(G)$ be the spectral radius of a graph $G$.
The spectral extremal value of a graph family $\mathcal{H}$,
denoted by ${\rm spex}(n,\mathcal{H})$,
is the maximum spectral radius over all
$n$-vertex $\mathcal{H}$-free graphs.
An $\mathcal{H}$-free graph $G$ is said to
be \textit{extremal} with respect to
${\rm spex}(n,\mathcal{H})$,
if $|G|=n$ and  $\rho(G)={\rm spex}(n,\mathcal{H})$.
Denote by ${\rm SPEX}(n,\mathcal{H})$
the family of extremal graphs with respect to
${\rm spex}(n,\mathcal{H})$.
For simplicity, we use ${\rm spex}(n,H)$
(resp. ${\rm SPEX}(n,H)$) instead of ${\rm spex}(n,\mathcal{H})$
(resp. ${\rm SPEX}(n,\mathcal{H})$)
if $\mathcal{H}=\{H\}$.
In recent years, the study on ${\rm spex}(n,H)$
has become very popular (see for example,
\cite{Byrne-arxiv-2024,Cioaba1,Cioaba-2023,Lei-2024,
LP-2022,Lin-2021,LinN-2021,NWK-arxiv,Nikiforov-2010,
TAIT-2019,TAIT-2017,Zhai-2022}).
Cioab\u{a},
Desai and Tait \cite{Cioaba-2022} posed the following conjecture.

\begin{conj}\emph{(\!\cite{Cioaba-2022})}\label{conj1.1}
Let $r\geq 2$, $n$ be sufficiently large,
and $H$ be a graph such that every graph in
${\rm EX}(n,H)$ are $T_{n,r}$ plus $O(1)$ edges.
Then, ${\rm SPEX}(n,H)\subseteq {\rm EX}(n,H)$.
\end{conj}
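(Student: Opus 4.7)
The plan is to combine the known spectral stability machinery with the edge-extremal structural hypothesis of the conjecture. Let $G \in {\rm SPEX}(n,H)$. Since $\chi(H)=r+1$ and $T_{n,r}$ has chromatic number $r$, the graph $T_{n,r}$ is itself $H$-free, so $\rho(G) \ge \rho(T_{n,r}) \ge (1-\tfrac{1}{r})n - 1$. The hypothesis that every element of ${\rm EX}(n,H)$ is $T_{n,r}$ plus $O(1)$ edges in particular forces ${\rm ex}(n,H) = e(T_{n,r}) + O(1)$, so $e(G) \le e(T_{n,r}) + C$ for some constant $C=C(H)$. These two pinching bounds are the input to everything that follows.

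Next, I would invoke a spectral stability theorem of Nikiforov type to conclude that $G$ differs from $T_{n,r}$ by at most $o(n^2)$ edges, and then upgrade this to an $O(1)$ edit distance. Let $\mathbf{x}$ be the Perron eigenvector of $G$ normalized with $\max_v x_v = 1$ and fix an $r$-partition $V_1 \cup \cdots \cup V_r$ of $V(G)$ maximizing the number of crossing edges. Using eigenvector-weighted degree inequalities together with $e(G) \le e(T_{n,r}) + C$, one shows that any vertex whose eigenvector entry deviates from the canonical Tur\'an value, or whose neighborhood pattern across the $V_i$ is atypical, can be recolored to strictly increase the crossing-edge count; this forces the number of such atypical vertices to be bounded uniformly in $n$. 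Consequently $G$ is obtained from $T_{n,r}$ by adding at most $O(1)$ intra-part edges and deleting at most $O(1)$ cross-edges, placing $G$ in the same finite-edit neighborhood of $T_{n,r}$ that houses every member of ${\rm EX}(n,H)$.

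The main obstacle will be the last step: showing that within this $O(1)$-edit neighborhood, the spectral maximizer is also an edge maximizer. I would handle it by an eigenvector-weighted exchange. Pick $G^\ast \in {\rm EX}(n,H)$ and evaluate $\mathbf{x}^\top\bigl(A(G^\ast)-A(G)\bigr)\mathbf{x}$; since all but $O(1)$ Perron entries satisfy $x_v = c + O(1/n)$ for a common constant $c$, to leading order each missing cross-edge costs $2c^{2}$ of spectral radius while each extra intra-part edge gains $2c^{2}$, so spectral radius is driven by the \emph{net} edge surplus over $T_{n,r}$. Because $G^\ast$ realizes the maximum such surplus compatible with $H$-freeness, any strictly smaller surplus in $G$ would give $\rho(G)<\rho(G^\ast)$, contradicting the choice of $G$; hence $G \in {\rm EX}(n,H)$. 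The delicate part is that the exchange must remain valid in the presence of the handful of exceptional high-weight vertices, which requires a quantitative estimate of the form $x_v = (1-1/r)+O(1/n)$ for typical vertices together with a symmetrization that transports the $O(1)$ added edges of $G^\ast$ into the part structure of $G$; this is where the hardest technical bookkeeping lies.
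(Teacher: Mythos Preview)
Your high-level strategy---spectral stability, then reduction to an $O(1)$ edit distance from $T_{n,r}$, then an eigenvector-weighted exchange against an edge-extremal $G^\ast$---is exactly the route the paper takes in proving the more general Theorem~\ref{thm1.3}, which specializes to this conjecture. Your final exchange step is essentially the content of Lemma~\ref{lem3.4} combined with Claim~\ref{cl3.4} and the closing computation of Section~\ref{4}: once all Perron entries satisfy $x_v=(1+O(1/n))x_{u^*}$ (Claim~\ref{cl3.3}), the spectral radius is governed to first order by the net surplus $e(G_{in})-e(G_{cr})$, and comparison with $G^\ast$ forces $G\in{\rm EX}(n,H)$.

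The genuine gap is in your middle step, upgrading from $o(n^2)$ to $O(1)$ edit distance. You propose to get this using only $e(G)\le e(T_{n,r})+C$ together with recoloring and eigenvector bounds, but those inputs do not suffice. The edge-count constraint by itself allows $\Theta(n)$ intra-part edges balanced by $\Theta(n)$ missing cross-edges, and such a graph can have spectral radius within $O(1/n)$ of $\rho(T_{n,r})$. Recoloring does not help either: a vertex with a single intra-part neighbor and full cross-part neighborhood already sits in its optimal color class, so the maximality of the partition says nothing. What actually pins the intra-part edges to $O(1)$ is the \emph{$H$-freeness of $G$}, used structurally rather than merely through the global edge count. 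The paper extracts this via Theorem~\ref{thm1.2}: the hypothesis ${\rm ex}(n,H)<e(T_{n,r})+\lfloor n/(2r)\rfloor$ forces the decomposition family $\mathcal{M}(\{H\})$ to contain both a matching $M_{2\nu}$ and a star $S_{\Delta+1}$. After the bad sets $L$ and $R$ are shown empty via eigenvector arguments (Claim~\ref{cl3.1}), Claims~\ref{cl2.5} and~\ref{cl2.6} give $\Delta(G[V_i])\le\Delta-1$ and $\nu(G[V_i])\le\nu-1$ in every part, because a large intra-part star or matching together with the complete $r$-partite backbone would already contain a copy of $H$. The Chv\'atal--Hanson bound (Lemma~\ref{lem2.2}) then yields $e(G[V_i])\le(\nu-1)\Delta=O(1)$, and Claim~\ref{cl3.2} confines all missing cross-edges to the same $O(1)$-vertex set $V(G_{in})$. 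Without this structural use of $H$-freeness your argument does not close; you should insert it between your spectral-stability step and the final exchange.
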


Up to now, Conjecture \ref{conj1.1}
has been confirmed for some cases of $H$:
$K_{r+1}$ \cite{Guiduli-1996,Nikiforov-2007},
color-critical graphs \cite{Nikiforov-2009-2,Zhai-2023},
friendship graphs \cite{Cioaba-2020,ZHAI2022},
intersecting cliques \cite{Desai-2022},
and intersecting odd cycles \cite{Li-2022}.
Recently, Wang, Kang and Xue \cite{WANG-2023}
completely solved Conjecture \ref{conj1.1}
by a stronger result as follows.

\begin{thm}\emph{(\!\!\cite{WANG-2023})}\label{thm1.1}
Let $r\geq 2$ be an integer, $n$ be sufficiently large,
and $H$ be a graph with ${\rm ex}(n,H)=e(T_{n,r})+O(1)$.
Then, ${\rm SPEX}(n,H)\subseteq {\rm EX}(n,H)$.
\end{thm}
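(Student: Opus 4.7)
The plan is to show that every $G^* \in {\rm SPEX}(n,H)$ must lie in ${\rm EX}(n,H)$ by first forcing $G^*$ to be a very small perturbation of $T_{n,r}$, and then showing that within this perturbation class the spectral radius is monotone in the edge count. Since ${\rm ex}(n,H) = e(T_{n,r}) + O(1)$, the Erd\H{o}s-Stone-Simonovits theorem yields $\chi(H) = r+1$, so $T_{n,r}$ is $H$-free and any spectral extremal $G^*$ satisfies $\rho(G^*) \geq \rho(T_{n,r}) \geq (1 - 1/r)n - 1$.

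First I would deduce structural near-rigidity. A Nikiforov-type spectral inequality gives $e(G^*) \geq e(T_{n,r}) - o(n^2)$, and the Erd\H{o}s-Simonovits edge stability theorem then produces a balanced $r$-partition $V_1 \cup \cdots \cup V_r$ of $V(G^*)$ whose crossing graph differs from the corresponding complete $r$-partite graph in only $o(n^2)$ edges. Feeding this partition back into the eigenvalue equation $\rho(G^*)\mathbf{x} = A(G^*)\mathbf{x}$ with the $\ell_\infty$-normalized Perron eigenvector $\mathbf{x}$ sharpens the defect to $O(n)$ and then to $O(1)$ ``bad'' edges, using the observation that $G^*$ is essentially edge-maximal $H$-free: if any non-edge could be added without creating a copy of $H$, then since the Perron entries are positive, adding that edge would strictly increase $\rho$, contradicting spectral maximality.

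Next I would show that the Perron entries satisfy $x_v = \alpha + o(1)$ for all but $O(1)$ vertices $v$, with $\alpha$ the common limiting value on the balanced partition. A Rayleigh quotient computation then yields
\[
\rho(G^*) \;=\; \rho(T_{n,r}) \;+\; \frac{2(a-d)}{n} \;+\; O\!\left(\tfrac{1}{n^2}\right),
\]
where $a$ counts the within-part edges of $G^*$ and $d$ counts the missing Tur\'an edges, both being $O(1)$. Picking any $G \in {\rm EX}(n,H)$, the same analysis produces the analogous formula with parameters $a', d'$. The spectral extremality $\rho(G^*) \geq \rho(G)$, combined with the $1/n$-scale gap in this expansion, forces $a - d \geq a' - d'$ for all sufficiently large $n$ (both sides being integers), and hence $e(G^*) = e(T_{n,r}) + a - d \geq e(T_{n,r}) + a' - d' = e(G) = {\rm ex}(n,H)$, yielding $G^* \in {\rm EX}(n,H)$.

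The main obstacle is tightening the structural defect from the $o(n^2)$ scale delivered by classical stability down to the $O(1)$ scale required for the Rayleigh comparison at the $1/n$ precision. This calls for an iterative cleanup: use the eigenvector to flag vertices with atypical weight, show there are only $O(1)$ of them, then invoke the hypothesis ${\rm ex}(n,H) = e(T_{n,r}) + O(1)$ together with the edge-maximality of $G^*$ to separately cap the number of within-part edges and the number of missing crossing edges. Once this $O(1)$-rigidity is in place, the first-order spectral monotonicity in $a - d$ closes the argument; everything before it is the real work.
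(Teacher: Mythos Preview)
The paper does not itself prove Theorem~\ref{thm1.1}; it is quoted from Wang, Kang and Xue \cite{WANG-2023}. What the paper proves is the extension Theorem~\ref{thm1.3}, and the authors say explicitly that their argument is ``inspired by'' \cite{WANG-2023}. Your outline matches the architecture of that proof closely: spectral stability (Lemma~\ref{lem3.2}) to land near $T_{n,r}$, a cleanup eliminating low-degree and high-inner-degree vertices (Claim~\ref{cl3.1}), near-constancy of the Perron entries (Claim~\ref{cl3.3}), and a first-order Rayleigh comparison against a graph in ${\rm EX}(n,\mathcal{H})$ (Lemma~\ref{lem3.4} and the final paragraphs of Section~\ref{4}).

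Two places where your sketch is thinner than the paper's execution deserve mention. First, the ``iterative cleanup'' down to $O(1)$ bad edges is not driven by edge-maximality alone: from the hypothesis ${\rm ex}(n,H)=e(T_{n,r})+O(1)$ the paper extracts, via Theorem~\ref{thm1.2}, a small matching $M_{2\nu}$ and a small star $S_{\Delta+1}$ in the decomposition family $\mathcal{M}(H)$, and it is these forbidden local configurations that force $\nu(G[V_i])$ and $\Delta(G[V_i])$ to be bounded, whence $e(G[V_i])=O(1)$ by the Chv\'atal--Hanson bound (Lemma~\ref{lem2.2}). Second, your expansion $\rho(G^*)=\rho(T_{n,r})+2(a-d)/n+O(1/n^2)$ tacitly assumes the underlying complete $r$-partite graph on $V_1,\dots,V_r$ is exactly $T_{n,r}$; the paper verifies this separately via Lemma~\ref{lem3.3}, which gives a $\gamma/n$ spectral penalty for any imbalance $|V_1|-|V_r|\geq 2$ and rules it out against inequality~\eqref{eq3.9}. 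Neither point overturns your plan, but both are genuine ingredients rather than bookkeeping.
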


Naturally, one may ask
whether Theorem \ref{thm1.1} is still true
when $O(1)$ is replaced with $\Theta(n)$?
This turns out to be false, and for example the odd wheel $W_{2k+1}$ is a counterexample.
Dzido and Jastrzebski \cite{TD} determined ${\rm EX}(n,W_5)$,
and Yuan \cite{Yuan-2021} characterized ${\rm EX}(n,W_{2k+1})$
for $k\geq3$ and $n$ sufficiently large.
Their results imply that ${\rm ex}(n,W_{2k+1})=e(T_{n,2})+\Theta(n)$.
Later,
Cioab\u{a}, Desai and Tait \cite{Cioaba-2022} determined
the structure of graphs in ${\rm SPEX}(n,W_{2k+1})$
when $k\geq 2$ and $k\notin \{4,5\}$,
and showed that ${\rm SPEX}(n,W_{2k+1})
\cap{\rm EX}(n,W_{2k+1})=\varnothing$
when $k=7$ or $k\geq 9$.

Although the conclusion of Theorem \ref{thm1.1} is not always true
for any $H$ with ${\rm ex}(n,H)=e(T_{n,r})+\Theta(n)$, we
use Theorem \ref{thm1.2} to obtain the following result,
which significantly extends Theorem \ref{thm1.1}.

\begin{thm}\label{thm1.3}
Let $n$ be sufficiently large and $\mathcal{H}$ be a finite graph family
satisfying $\chi(\mathcal{H})=r+1\geq 3$ and ${\rm ex}(n,\mathcal{H})<e(T_{n,r})+\lfloor \frac{n}{2r} \rfloor$.
Then, ${\rm SPEX}(n,\mathcal{H})\subseteq{\rm EX}(n,\mathcal{H})$.
\end{thm}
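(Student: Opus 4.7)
The plan is to start with an arbitrary $G^{\ast}\in {\rm SPEX}(n,\mathcal{H})$ and, through three stages of increasingly tight structural analysis, force $G^{\ast}$ to meet the Tur\'{a}n bound and therefore lie in ${\rm EX}(n,\mathcal{H})$. Since $\chi(\mathcal{H})=r+1$, the Tur\'{a}n graph $T_{n,r}$ is itself $\mathcal{H}$-free, so $\rho(G^{\ast})\geq\rho(T_{n,r})\geq (1-\tfrac{1}{r})n-1$; on the other hand, the hypothesis gives $e(G^{\ast})\leq e(T_{n,r})+\lfloor\tfrac{n}{2r}\rfloor-1$. These two bounds together squeeze $G^{\ast}$ to look essentially like $T_{n,r}$, and the goal is to promote this proximity to an exact statement by comparing $G^{\ast}$ with any extremal graph in ${\rm EX}(n,\mathcal{H})$, which by Theorem~\ref{thm1.2}(iii) differs from $T_{n,r}$ by only $O(1)$ edges.

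In the first stage I would carry out a standard stability plus min-degree bootstrap. By Nikiforov's spectral stability theorem (or by reading it off the Tur\'{a}n-type edge bound just noted), $G^{\ast}$ admits an $r$-partition $V_1\cup\cdots\cup V_r$ with $|V_i|=\tfrac{n}{r}+o(n)$ and $\sum_i e(G^{\ast}[V_i])=o(n^2)$. A Perron-eigenvector swap argument --- if some vertex $v$ had degree much below $(1-\tfrac{1}{r})n$, delete $v$ and duplicate a ``typical'' vertex of the largest part to strictly increase $\rho$ --- then upgrades this to $\delta(G^{\ast})\geq (1-\tfrac{1}{r}-\varepsilon)n$ for any fixed $\varepsilon>0$ and sufficiently large $n$.

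The second, and critical, stage invokes Theorem~\ref{thm1.2}(ii): there exist integers $\nu\leq\lfloor\tfrac{\phi}{2}\rfloor$ and $\Delta\leq\phi$ with $M_{2\nu},S_{\Delta+1}\in \mathcal{M}(\mathcal{H})$. If some $G^{\ast}[V_i]$ contained $\nu$ independent edges, then by the min-degree bound I could greedily select $\phi$ vertices in each of the other $r-1$ parts so that the induced cross-structure contains $T_{(r-1)\phi,r-1}$; together with the matching and $\phi$ additional isolated vertices inside $V_i$ this embeds a copy of $(M_{2\nu}\cup E_{\phi})+T_{(r-1)\phi,r-1}$, which by definition of $\mathcal{M}(\mathcal{H})$ contains some $H\in\mathcal{H}$, a contradiction. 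The analogous argument with $S_{\Delta+1}$ forces $\Delta(G^{\ast}[V_i])\leq\Delta$. Hence $G^{\ast}[V_i]$ has matching number $<\nu$ and maximum degree $\leq\Delta$, so a K\"onig-type bound gives $e(G^{\ast}[V_i])\leq 2(\nu-1)\Delta=O(1)$. A dual application to the bipartite complement between two parts likewise forces $O(1)$ missing cross-edges. Thus $G^{\ast}$ is obtained from $T_{n,r}$ by adding an edge set $A$ and deleting an edge set $B$ with $|A|,|B|=O(1)$.

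The final stage is a local-swap comparison. The Perron components of $G^{\ast}$ are nearly constant on each $V_i$ (up to $O(1/n)$) by the eigen-equation and the proximity of $G^{\ast}$ to $T_{n,r}$, so whenever a missing cross-edge $uv$ and an inside edge $uw$ share the vertex $u$, replacing $uw$ by $uv$ strictly increases the Rayleigh quotient. That the swap preserves $\mathcal{H}$-freeness is the heart of the matter; it is verified using Theorem~\ref{thm1.2}(ii), since any newly created copy of some $H\in\mathcal{H}$ would have to use the new edge $uv$, and the bounded numbers of inside edges and missing cross-edges combined with the matching/star characterization of $\mathcal{M}(\mathcal{H})$ rule out the required embedding. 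Iterating the swap yields an $\mathcal{H}$-free graph of the form $T_{n,r}+A'$ with spectral radius $\geq\rho(G^{\ast})$; the extremality of $G^{\ast}$ forces $B=\varnothing$, and then maximality of $|A|$ (otherwise one more insertion would be $\mathcal{H}$-free and improve $\rho$) gives $e(G^{\ast})={\rm ex}(n,\mathcal{H})$, i.e., $G^{\ast}\in{\rm EX}(n,\mathcal{H})$. The main obstacle I expect is making the swap step fully rigorous: preserving $\mathcal{H}$-freeness after each local move is exactly where the \emph{joint} presence of a matching and a star in $\mathcal{M}(\mathcal{H})$, supplied by Theorem~\ref{thm1.2}(ii), is used in an essential way.
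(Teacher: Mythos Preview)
Your first two stages are broadly on the right track and parallel the paper's development (spectral stability, then the matching/star bounds on each $G^{\ast}[V_i]$ via the decomposition family). However, your ``dual application to the bipartite complement'' to bound the number of missing cross-edges is not a valid argument: the decomposition family controls subgraphs you can embed, not edges that are absent. The paper instead shows that every isolated vertex of $G^{\ast}[V_i]$ is complete to the other parts (an easy extremality argument), which forces $V(G_{cr})\subseteq V(G_{in})$ and hence $e(G_{cr})=O(1)$.

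The real problem is your third stage. The local swap ``remove an inside edge $uw$, add the missing cross-edge $uv$'' does \emph{not} preserve $\mathcal{H}$-freeness in general, and in fact your conclusion $B=\varnothing$ is false. The paper's own concluding Example exhibits a finite family $\mathcal{H}$ with $\chi(\mathcal{H})=3$ and $\mathrm{ex}(n,\mathcal{H})=e(T_{n,2})+\Theta(1)$ for which \emph{no} graph in $\mathrm{EX}(n,\mathcal{H})$ contains $T_{n,2}$; since $\mathrm{SPEX}(n,\mathcal{H})\subseteq\mathrm{EX}(n,\mathcal{H})$, the spectral extremal graph itself has $B\neq\varnothing$. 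In that example, performing your swap (adding back the missing cross-edge while deleting one leaf of an interior star) immediately creates a forbidden $H_3$. So the step you flagged as the ``main obstacle'' is not merely delicate --- it is wrong as stated.

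What the paper does instead is a quantitative eigenvalue comparison with $O(1/n^2)$ precision. After establishing that the Perron entries satisfy $x_u\geq(1-O(1/n))x_{u^{\ast}}$, one computes $\rho(K)-\rho(G^{\ast})\geq \tfrac{2(e(G_{cr})-e(G_{in}))}{n}-O(1/n^2)$ via the Rayleigh quotient, where $K$ is the complete $r$-partite graph on the same partition. Separately, for a fixed $G\in\mathrm{EX}(n,\mathcal{H})$ written as $T_{n,r}$ plus $\alpha_1$ added and $\alpha_2$ deleted edges, a direct Rayleigh calculation with the Perron vector of $T_{n,r}$ yields $\rho(G)-\rho(T_{n,r})\geq \tfrac{2(\alpha_1-\alpha_2)}{n}-O(1/n^2)$, hence $\rho(G^{\ast})-\rho(T_{n,r})$ has the same lower bound. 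Adding the two inequalities and using that $K\cong T_{n,r}$ (forced by a separate lemma on unbalanced complete $r$-partite graphs) gives $e(G_{in})-e(G_{cr})\geq \alpha_1-\alpha_2$. The reverse inequality comes from applying the edge-extremal bound to a balanced Tur\'an subgraph of $G^{\ast}$ containing all of $V(G_{in})$. Equality then gives $e(G^{\ast})=e(T_{n,r})+\alpha_1-\alpha_2=\mathrm{ex}(n,\mathcal{H})$, without ever asserting $B=\varnothing$.
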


We also prove a spectral version of Theorem \ref{thm1.4},
in which the unique extremal graph
is characterized.
A direct application is to characterize the spectral extremal graph 
on forbidding $k$
edge-disjoint odd cycles $C_{2\ell_1+1},C_{2\ell_2+1},
\ldots,C_{2\ell_k+1}$,
strengthening a spectral extremal result of Lin, Zhai and Zhao on $k$
edge-disjoint triangles (see \cite{LIN-2022}).

\begin{thm}\label{thm1.5}
Let $F_1,\ldots, F_k$ be color-critical graphs with $\chi(F_i)=r+1\geq3$ for $i\in\{1,\ldots,k\}$. For sufficiently large $n$, the unique graph in ${\rm SPEX}(n,\mathbb{G}(F_1,\ldots,F_k))$ is obtained
from $T_{n,r}$ by embedding a subgraph $H$ of size $k-1$ in
a partite set $V_i$ with $|V_i|=\lfloor\frac nr\rfloor,$
where $H$ is a triangle for $k=4$ and $H$ is a star otherwise.
\end{thm}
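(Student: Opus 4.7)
The plan is to combine Theorems~\ref{thm1.3} and~\ref{thm1.4} with a Perron-eigenvector analysis. By Theorem~\ref{thm1.4}, ${\rm ex}(n,\mathbb{G}(F_1,\ldots,F_k))=e(T_{n,r})+k-1$. Since $k$ and $r$ are constants, for $n$ large we have $k-1<\lfloor n/(2r)\rfloor$, so the hypothesis of Theorem~\ref{thm1.3} is satisfied (note $\chi(\mathbb{G}(F_1,\ldots,F_k))=r+1$ because each $F_i$ has chromatic number $r+1$). Hence ${\rm SPEX}(n,\mathbb{G}(F_1,\ldots,F_k))\subseteq{\rm EX}(n,\mathbb{G}(F_1,\ldots,F_k))$, and every candidate spectral extremal graph has the form $G=T_{n,r}+H$ where $H$ is a graph with exactly $k-1$ edges whose edges are embedded within the partite classes of $T_{n,r}$. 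The task is then to determine which choice of $H$ and which partite class maximize $\rho(G)$.

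To compare candidates I would exploit the symmetries of $G$. If $H$ lies in a single partite class $V_1$, then the Perron vector $\mathbf{x}$ of $G$ takes a common value $y_j$ on each $V_j\setminus V(H)$, and the excesses $\delta_v=x_v-y_1$ for $v\in V(H)$ satisfy $(\rho I-A(H))\,\delta=y_1\,d_H$, where $d_H$ is the degree sequence of $H$. Expanding the resolvent yields the walk-count identity
\begin{equation*}
\Delta_H:=\sum_{v\in V(H)}\delta_v=y_1\sum_{\ell\geq 1}\frac{W_\ell(H)}{\rho^\ell},\qquad W_\ell(H):=\mathbf{1}^\top A(H)^\ell\mathbf{1}.
\end{equation*}
Substituting back into the eigenvector equations for the remaining vertices reduces to a scalar equation in $\rho$ that is strictly monotone in both $\Delta_H$ and $y_1$. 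From this I would extract two reductions: (i) all $k-1$ edges must be concentrated in a single partite class, since placing edges in two distinct classes produces two decoupled excess systems and merging into one class strictly increases both the $y$-factor and the walk counts $W_\ell$; and (ii) that class has size $\lfloor n/r\rfloor$, because in $T_{n,r}$ with unequal partite classes the Perron value $y_j$ is larger on the smaller class (indeed, $(\rho+|V_j|)y_j$ is constant in $j$ in the unperturbed Tur\'{a}n graph, and the perturbation from $H$ is $O(1)$).

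It then remains to maximize $\big(W_1(H),W_2(H),W_3(H),\ldots\big)$ lexicographically over graphs $H$ with $k-1$ edges. Since $W_1(H)=2(k-1)$ is fixed, the decisive quantity is $W_2(H)=\sum_v d_v^2$. It is classical that, among graphs with $m$ edges, the star $S_{m+1}$ uniquely maximizes $\sum_v d_v^2$, with the sole exception $m=3$, where the triangle $C_3$ also attains the maximum value $12$. Hence for $k\neq 4$ the star $S_k$ is uniquely optimal and the theorem follows. For $k=4$, where the tie at $W_2$ occurs, a direct computation gives $W_3(C_3)=24>18=W_3(S_4)$, so the triangle wins at the next order. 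I expect the main technical obstacle to be step (i) above, since spreading edges across several classes produces independent excess vectors that do not combine into a single formula a priori; handling this requires running the Perron analysis separately in each class and verifying strict superadditivity of the walk counts under merging, together with the monotonicity of the $y$-factors with respect to class size.
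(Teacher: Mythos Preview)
Your reduction via Theorems~\ref{thm1.3} and~\ref{thm1.4} matches the paper's Lemma~\ref{lem5.1}, and the walk-count identity $\Delta_H=y_1\sum_{\ell\ge 1}W_\ell(H)\rho^{-\ell}$ together with the lexicographic comparison of $(W_1,W_2,W_3,\dots)$ gives a correct and conceptually clean treatment of the single-class optimization; in particular your resolution of the $k=4$ tie via $W_3(C_3)=24>18=W_3(S_4)$ is neater than the paper's explicit eigenvector manipulation in Claim~\ref{cl5.3}. The paper takes a different route: for each of the three reductions (class size, shape of $H_i$, single class) it constructs an explicit competitor $G'$ and applies Rowlinson's double-eigenvector identity $\mathbf{x}^T\mathbf{y}(\rho'-\rho)=\mathbf{x}^T(A(G')-A(G^*))\mathbf{y}$, controlling the cross terms through the bounds on $x_{\overline{V_i}}$ in Claim~\ref{cl5.1}. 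Your method, if completed, would be more structural; the paper's is heavier computationally but avoids any asymptotic bookkeeping.

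Your heuristic for step~(i), however, contains a reversal. Working at a common value of $\rho$ with equal parts of size $m$, the single-class configuration satisfies $(\rho+m)y_1=S$, whereas with $H_1\subset V_1$ and $H_2\subset V_2$ one finds $(\rho+m)y_1=S+\Delta_{2}$; so the $y$-factors feeding $\Delta_i=y_i\Phi_{H_i}$ are \emph{larger} in the split case, not smaller. What actually makes the single-class star win is the superadditivity of $W_2$: merging $a_1+a_2$ edges into one star raises $W_2$ by $2a_1a_2$, contributing at order $\rho^{-2}(\rho+m)^{-1}\sim n^{-3}$ to your scalar equation, while the two-class coupling correction $2p_1p_2/(1-p_1p_2)$ with $p_i=\Phi_{H_i}/(\rho+m)$ is only of order $n^{-4}$. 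A rigorous proof along your lines must isolate this gap (and handle the additional coupling when the parts have unequal sizes), which is precisely the obstacle you flag; the paper's Claim~\ref{cl5.4} sidesteps the issue entirely by a direct two-eigenvector comparison against the single-class star.
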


The rest of the paper is organized as follows.
In Sections \ref{2}-\ref{5}, we give the proofs of
main theorems mentioned above.
In the concluding section, some problems
are proposed for further research.
Throughout the following sections,
we assume that $r\geq2$ and $n$ is sufficiently large.

\section{Proof of Theorem \ref{thm1.2}}\label{2}

Before proceeding,
we give some notation and terminology not defined above.
A matching in $G$ is a subset of edges,
no two of which share a common vertex.
The matching number of $G$, denoted by $\nu(G)$,
is the number of edges in a maximum matching.
Given two disjoint vertex subsets $V_1,V_2\subseteq V(G)$.
Let $G[V_1]$ and $G-V_1$ be the subgraph induced by $V_1$
and $V(G)\setminus V_1$, respectively.
Let $e(G)=|E(G)|$, $e(V_1)=e(G[V_1])$, and $e(V_1,V_2)$
be the number of edges with
one endpoint in $V_1$ and the other in $V_2$.

The classical stability theorem was given
by Erd\H{o}s \cite{Erdos-1967,Erdos-1968}
and Simonovits \cite{Simonovits-1968}.
It plays a very important role in extremal graph theory.

\begin{lem} \label{lem2.1}
\emph{(\!\cite{Erdos-1967,Erdos-1968,Simonovits-1968})}
Let $\mathcal{H}$ be a finite graph family
with $\chi(\mathcal{H})=r+1\geq 3$.
For every $\varepsilon>0$,
there exist a constant $\delta>0$ and an integer $n_0$ such that
if $G$ is an $\mathcal{H}$-free graph on
$n\geq n_0$ vertices with
$e(G)\geq (\frac{r-1}{r}-\delta)\frac{n^2}{2}$,
then $G$ can be obtained from $T_{n,r}$
by adding and deleting at most $\varepsilon n^2$ edges.
\end{lem}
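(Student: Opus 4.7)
The plan is to reduce to forbidding a single complete multipartite graph and then apply a max-$r$-cut argument combined with a supersaturation embedding. Since $\chi(\mathcal{H})=r+1$, there exists $H_0\in\mathcal{H}$ with $\chi(H_0)=r+1$; setting $t=|V(H_0)|$, we have $H_0\subseteq K_{r+1}(t,\ldots,t)$, so every $\mathcal{H}$-free graph on $n$ vertices is automatically $K_{r+1}(t,\ldots,t)$-free. It therefore suffices to prove the conclusion under the assumption that $G$ is $K_{r+1}(t,\ldots,t)$-free.

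Next I would choose an $r$-partition $V(G)=V_1\cup\cdots\cup V_r$ that maximizes the number of crossing edges, and write $e_c$ for the number of crossing edges and $e_w=\sum_{i=1}^{r}e(G[V_i])$ for the number of within-part edges. By the max-cut choice, every $v\in V_i$ satisfies $d_G(v,V_i)\leq d_G(v,V_j)$ for all $j$. Combining $e_c\leq e(T_{n,r})\leq (1-1/r)n^2/2$ with the hypothesis $e(G)=e_c+e_w\geq (1-1/r-\delta)n^2/2$, a convexity argument on the quantity $\sum_{i<j}|V_i||V_j|$ will force $|V_i|=n/r+O(\sqrt{\delta}\,n)$ for each $i$, \emph{once} we know that $e_w$ is small.

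The heart of the proof is to establish $e_w\leq \tfrac{\varepsilon}{3}n^2$. Suppose otherwise. Then a linear number of vertices $v$, all lying in some common part $V_i$, satisfy $d_G(v,V_i)\geq cn$ for a constant $c=c(\varepsilon)>0$, and by the max-cut inequality each such $v$ has at least $cn$ neighbors in every other part as well. Fix such a $v\in V_i$. Iterating a Kővári–Sós–Turán-style common-neighborhood extraction across the remaining $r-1$ parts, I would pass to subsets $U_j\subseteq V_j$ of size at least $c^{\,r}n$ with the property that any $t$ vertices chosen from one $U_j$ have at least $t$ common neighbors in each of the other $U_{j'}$; and I would use the remaining pool of same-part neighbors of $v$ to furnish the $(r+1)$-st colour class. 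Greedily selecting $t$ vertices in each class yields a copy of $K_{r+1}(t,\ldots,t)$, hence of $H_0$, which contradicts $\mathcal{H}$-freeness.

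Once $e_w\leq \tfrac{\varepsilon}{3}n^2$ and each $|V_i|$ is within $O(\sqrt{\delta}\,n)$ of $n/r$, the edit distance from $G$ to $T_{n,r}$ is bounded by $e_w$ (deleting within-part edges), plus $e(T_{n,r})-e_c$ (filling in missing crossing edges), plus $O(\sqrt{\delta}\,n^2)$ (reassigning vertices to rebalance the parts). Choosing $\delta$ small enough in terms of $\varepsilon$ and $t$, and $n_0$ large enough, each of these contributions is at most $\tfrac{\varepsilon}{3}n^2$, yielding the claimed bound. The main obstacle I expect is the supersaturation step: one must verify quantitatively that the constant $c$ and the common-neighborhood sizes stay bounded below throughout all $r+1$ stages of the greedy embedding, and it is precisely this iteration that determines how small $\delta$ must be taken relative to $\varepsilon$ and $\phi(\mathcal{H})$.
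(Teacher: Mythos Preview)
The paper does not prove this lemma: it is quoted as the classical Erd\H{o}s--Simonovits stability theorem, cited to the original 1966--1968 papers, and then used as a black box throughout Section~2. There is thus no in-paper argument for you to compare against.

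On the substance of your sketch: the overall architecture (reduce to $K_{r+1}(t,\ldots,t)$, take a max $r$-cut, show few within-part edges, then balance the parts) is one of the standard routes to stability, and you correctly identify the embedding step as the crux. That step, however, has a real gap as written. The assertion that a K\H{o}v\'ari--S\'os--Tur\'an iteration produces subsets $U_j$ in which \emph{every} $t$-tuple from one $U_j$ has $t$ common neighbours in each other $U_{j'}$ is stronger than what KST actually delivers; what you obtain is a single $t$-tuple with a large common neighbourhood, and the selection must be interleaved across all $r+1$ classes rather than carried out part by part. Likewise, ``the remaining pool of same-part neighbours of $v$'' is a set joined only to the one vertex $v$, not to a full $t$-set already placed in each of the other $r$ classes, so it cannot serve directly as the $(r+1)$-st class. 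The standard repair is to precede the max-cut by iteratively deleting vertices of degree below $(1-1/r-\eta)n$; one checks that at most $O(\sqrt{\delta}\,n)$ vertices are removed, and in the remaining graph every vertex has near-maximum degree, which both pins down the part sizes and lets you run the greedy embedding one vertex at a time (choose a vertex in the current class, restrict every other class to its neighbourhood, repeat $(r+1)t$ times), with all neighbourhood sizes staying linear because each vertex misses only about $n/r$ others.
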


Given two integers $\nu$ and $\Delta$,
we denote $f(\nu,\Delta)=\max\{e(G): \nu(G)\le
\nu,\Delta(G)\le \Delta\}$.
The following result was given by Chv\'{a}tal
and Hanson \cite{CH-1976}.

\begin{lem} \label{lem2.2}\emph{(\!\cite{CH-1976})}
For every two integers $\nu\ge 1$ and $\Delta\ge 1$,
we have
\begin{equation*}
f(\nu,\Delta)=\nu\Delta+\Big\lfloor\frac{\Delta}{2}\Big\rfloor
\Big\lfloor\frac{\nu}{\lceil{\Delta}/{2}\rceil}\Big\rfloor\le\nu(\Delta+1).
\end{equation*}
\end{lem}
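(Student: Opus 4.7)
The inequality $\leq \nu(\Delta+1)$ is a trivial arithmetic consequence of the stated equality, since $\lfloor\Delta/2\rfloor\leq\lceil\Delta/2\rceil$ and $\lfloor\nu/\lceil\Delta/2\rceil\rfloor\leq\nu/\lceil\Delta/2\rceil$ together give $\lfloor\Delta/2\rfloor\lfloor\nu/\lceil\Delta/2\rceil\rfloor\leq\nu$. Hence the substantive content is the exact evaluation of $f(\nu,\Delta)$, which I would prove by combining an upper bound via edge-coloring with an explicit extremal construction.

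For the upper bound, the clean starting point is Vizing's theorem: since $\Delta(G)\leq\Delta$, one has $\chi'(G)\leq\Delta+1$, so $E(G)$ decomposes into at most $\Delta+1$ matchings, each of size at most $\nu(G)\leq\nu$. This alone yields the slack estimate $e(G)\leq(\Delta+1)\nu$, which is already the second inequality of the lemma. To sharpen it to the exact formula I would, following Chv\'{a}tal and Hanson, analyze an extremal $G$ by induction on $\nu$: writing $\nu=q\lceil\Delta/2\rceil+s$ with $0\leq s<\lceil\Delta/2\rceil$, I would locate in $G$ either a $(\Delta+1)$-clique or a suitable overfull subgraph, peel it off (removing $\binom{\Delta+1}{2}$ edges and $\lceil\Delta/2\rceil$ units of matching budget), and apply induction to the residual graph. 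The base case, where no such block exists, is the technical heart: a short case analysis using the degree bound and the leftover matching budget $s$ caps the residual edge count at $s\Delta+\lfloor\Delta/2\rfloor$, matching the closed formula.

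For the lower bound I would exhibit an explicit extremal graph: take $q$ vertex-disjoint copies of $K_{\Delta+1}$, contributing $q\binom{\Delta+1}{2}=q\Delta(\Delta+1)/2$ edges and using $q\lceil\Delta/2\rceil$ of the matching budget, and append a remainder gadget $H_s$ of matching number $s$ and maximum degree $\leq\Delta$ maximizing the residual edge count. For $\Delta$ even one may take $H_s$ to be a star $K_{1,\Delta}$; for $\Delta$ odd a slightly more clever construction is needed (for instance a $K_5$-minus-three-edges-type graph of the kind that witnesses the non-obvious value $f(2,3)=7$). A direct arithmetic check then confirms the total equals $\nu\Delta+\lfloor\Delta/2\rfloor q$. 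The main obstacle throughout is the structural statement underlying the sharpened upper bound --- that any extremal $G$ essentially decomposes as disjoint $K_{\Delta+1}$'s plus a small remainder --- which is the core of Chv\'{a}tal and Hanson's argument and which I would invoke via their cited paper rather than reprove from scratch.
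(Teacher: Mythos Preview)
The paper does not prove Lemma~2.2 at all; it is quoted verbatim from Chv\'atal and Hanson \cite{CH-1976} and used as a black box. So there is no ``paper's own proof'' to compare against, and your decision to ultimately defer to the cited reference is exactly what the authors do.

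Your derivation of the slack bound $f(\nu,\Delta)\le(\Delta+1)\nu$ via Vizing is clean and correct, and the arithmetic for the final inequality is fine. One genuine slip in your lower-bound sketch: for odd $\Delta$ the blocks $K_{\Delta+1}$ are themselves suboptimal, not just the remainder. A copy of $K_{\Delta+1}$ has $\binom{\Delta+1}{2}$ edges and matching number $\lceil\Delta/2\rceil$, but the formula gives $f(\lceil\Delta/2\rceil,\Delta)=\lceil\Delta/2\rceil\Delta+\lfloor\Delta/2\rfloor=\binom{\Delta+1}{2}+\tfrac{\Delta-1}{2}$, so each block is short by $(\Delta-1)/2$ edges, and no choice of remainder $H_s$ (with $\nu(H_s)\le s$) can recover a deficit that scales with $q$. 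Your $K_5$-minus-three-edges example (namely $\overline{P_3\cup K_2}$) is the right replacement block for $\Delta=3$, but your text reads as though only $H_s$ changes with parity. The fix is to use such a $(\Delta+2)$-vertex gadget as the repeated block when $\Delta$ is odd; once that is done the arithmetic goes through. Since you are citing \cite{CH-1976} anyway, this is a minor point, but the sketch as written would not survive a ``direct arithmetic check'' for odd $\Delta$.
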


\begin{definition}\label{def2.1}
Let $\mathcal{H}$ be a finite graph family
with $\chi(\mathcal{H})=r+1\geq 3$ and $\phi(\mathcal{H})=\phi$
such that $M_{2\nu},
S_{\Delta+1}\in\mathcal{M}(\mathcal{H})$ for some integers $\nu\leq\lfloor\frac{\phi}{2}\rfloor$
and $\Delta\leq \phi$.
A graph $G$ is called $\mathcal{H}$-good if
$G$ is an $n$-vertex $\mathcal{H}$-free graph and is obtained from $T_{n,r}$
by adding and deleting at most $\varepsilon n^2$ edges, where
\begin{equation}\label{eq2.1}
\varepsilon<4^{-3}\varphi^{-15}
,~~\varphi=\max\{r,\phi+1,5\}.
\end{equation}
\end{definition}

Let $\mathcal{H}$ be defined
as in Definition \ref{def2.1}.
In the following, we shall prove six claims
for an arbitrary  $\mathcal{H}$-good graph $G$
of order $n$ sufficiently large.

\begin{claim}\label{cl2.1}
Define $V_1,\ldots, V_r$ such that they form a partition $V(G)=\bigcup_{i=1}^{r}V_i$
where $\sum_{1\leq i<j\leq r}e(V_i,V_j)$ attains the maximum. Then 
$\sum_{i=1}^{r}e(V_i)\leq \varepsilon n^2$ and $\big||V_i|-\frac nr\big|\leq\varepsilon^{\frac13} n$
for each $i\in [r],$ where $[r]=\{1,2,\ldots,r\}$.
\end{claim}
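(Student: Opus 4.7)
The plan is to exploit the $\mathcal{H}$-good hypothesis in tandem with the maximality built into the definition of $(V_1,\ldots,V_r)$. First I would identify $V(G)$ with $V(T_{n,r})$ via Definition \ref{def2.1} and let $U_1,\ldots,U_r$ be the canonical Tur\'an $r$-partition. Every edge of $G$ lying inside some $U_i$ must be one of the at most $\varepsilon n^2$ \emph{added} edges, hence $\sum_{i} e_G(U_i)\le\varepsilon n^2$. Since maximizing $\sum_{i<j}e(V_i,V_j)$ is equivalent to minimizing $\sum_i e(V_i)$ (their sum being the fixed quantity $e(G)$), the partition $(V_1,\ldots,V_r)$ satisfies $\sum_i e(V_i)\le\sum_i e_G(U_i)\le\varepsilon n^2$, giving the first assertion.

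For the size bound, I would play two complementary estimates on $\sum_{i<j}e(V_i,V_j)$ off each other. On one hand, at most $\varepsilon n^2$ edges of $T_{n,r}$ were deleted, so $e(G)\ge e(T_{n,r})-\varepsilon n^2$, whence
\[
\sum_{i<j}e(V_i,V_j)=e(G)-\sum_i e(V_i)\ \ge\ e(T_{n,r})-2\varepsilon n^2\ \ge\ \frac{(r-1)n^2}{2r}-2\varepsilon n^2-\frac{r}{8}.
\]
On the other hand, writing $|V_i|=n/r+t_i$ with $\sum_i t_i=0$,
\[
\sum_{i<j}e(V_i,V_j)\ \le\ \sum_{i<j}|V_i||V_j|\ =\ \frac{(r-1)n^2}{2r}-\frac{1}{2}\sum_i t_i^{2}.
\]
Comparing the two bounds yields $\sum_i t_i^{2}\le 5\varepsilon n^2$ for $n$ large, and in particular $|t_i|\le\sqrt{5\varepsilon}\,n$ for every $i$.

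Finally, condition (\ref{eq2.1}) forces $\varepsilon$ to be small enough (in particular $\varepsilon<5^{-15}$, since $\varphi\ge 5$) that $\sqrt{5\varepsilon}\le\varepsilon^{1/3}$, delivering $\bigl||V_i|-n/r\bigr|\le\varepsilon^{1/3}n$. There is no serious obstacle here: the whole argument is a routine stability computation, and the only point to watch is matching the $\sqrt{\varepsilon}$-type defect produced by the quadratic inequality to the sharper $\varepsilon^{1/3}$ slack demanded by the claim, which is precisely what the smallness built into (\ref{eq2.1}) is designed to absorb.
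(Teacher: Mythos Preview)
Your proof is correct and follows essentially the same approach as the paper: both bound $\sum_i e(V_i)$ by comparing with the Tur\'an partition $U_1,\dots,U_r$, and both control the part sizes by playing a quadratic upper bound on $\sum_{i<j}|V_i||V_j|$ against the lower bound $e(G)\ge e(T_{n,r})-\varepsilon n^2$. The only cosmetic difference is that you use the clean identity $\sum_{i<j}|V_i||V_j|=\tfrac{(r-1)n^2}{2r}-\tfrac12\sum_i t_i^2$, whereas the paper isolates the extremal part and bounds the rest via Cauchy--Schwarz, arriving at $(6\varepsilon)^{1/2}n$ instead of your $(5\varepsilon)^{1/2}n$; both are then absorbed into $\varepsilon^{1/3}n$ via~(\ref{eq2.1}).
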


\begin{proof}
By Definition \ref{def2.1}, we have
\begin{equation}\label{eq2.2}
e(G)\geq e(T_{n,r})-\varepsilon n^2\geq \frac{r-1}{2r}n^2-\frac{r}{8}-\varepsilon n^2
>\frac{r-1}{2r}n^2-2\varepsilon n^2,
\end{equation}
and there exists a partition $V(G)=\bigcup_{i=1}^{r} U_i$ such that
$\sum_{i=1}^{r}e(U_i)\leq \varepsilon n^2$
and $\big\lfloor\frac nr\big\rfloor\leq |U_i|
\leq\big\lceil\frac nr\big\rceil$ for each $i\in [r]$.

Now we select a new partition $V(G)=\bigcup_{i=1}^{r}V_i$
such that $\sum_{1\leq i<j\leq r}e(V_i,V_j)$
is maximized.
Equivalently $\sum_{i=1}^{r}e(V_i)$
is minimized, and hence
$\sum_{i=1}^{r}e(V_i)\leq
\sum_{i=1}^{r}e(U_i)\leq \varepsilon n^2.$

Set $\alpha=\max\Big\{\big||V_i|
-\frac nr\big|: i\in [r]\Big\}$.
We may assume that
$\alpha=\big||V_1|-\frac nr\big|$.
By the Cauchy-Schwarz inequality,
we get that
$(r-1)\sum_{i=2}^{r}|V_i|^2\geq\big(\sum_{i=2}^{r}|V_i|\big)^2
=(n-|V_1|)^2$.
Thus,
\begin{equation*}
2\sum_{2\leq i<j\leq r}|V_i||V_j|=
\Big(\sum_{i=2}^{r}|V_i|\Big)^2-\sum_{i=2}^{r}|V_i|^2
\leq \frac{r-2}{r-1}\Big(n-|V_1|\Big)^2.
\end{equation*}
Consequently,
\begin{align*}
e(G)
&= \sum_{1\leq i<j\leq r}|V_i||V_j|+\sum_{i=1}^{r}e(V_i)
\leq |V_1|(n-|V_1|)+\!\!\sum_{2\leq i<j\leq r}\!\!|V_i||V_j|
+\varepsilon n^2 \nonumber\\
&\leq |V_1|(n-|V_1|)+\frac{r-2}{2(r-1)}\big(n-|V_1|\big)^2+\varepsilon n^2 \nonumber\\
&=-\frac{r}{2(r-1)}\alpha^2+\frac{r-1}{2r}n^2+\varepsilon n^2,
\end{align*}
where the last equality holds as $\alpha=\big||V_1|-\frac nr\big|$.
Combining \eqref{eq2.2} gives that $\frac{r}{2(r-1)}\alpha^2
<3\varepsilon n^2$.
Therefore, $\alpha
<(6\varepsilon)^{\frac12} n<\varepsilon^{\frac13}n$
as $\varepsilon<6^{-3}$ by \eqref{eq2.1}.
\end{proof}

\begin{claim}\label{cl2.2}
Let $L=\{v\in V(G): d_G(v)\leq
\big(\frac{r-1}{r}-5\varepsilon^{\frac13}\big)n\}.$
Then we have $|L|\leq \varepsilon^{\frac13} n$.
\end{claim}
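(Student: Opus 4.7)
The plan is to exploit the partition $V_1,\ldots,V_r$ provided by Claim \ref{cl2.1} and to count, for each vertex $v$, its ``external non-degree'' with respect to this partition. Writing $i(v)$ for the index with $v\in V_{i(v)}$, set
\[
\mu(v)=\bigl(n-|V_{i(v)}|\bigr)-\bigl(d_G(v)-|N_G(v)\cap V_{i(v)}|\bigr),
\]
that is, the number of vertices outside $V_{i(v)}$ that are \emph{not} adjacent to $v$. The strategy is to show that (a) every $v\in L$ individually has a large $\mu(v)$, while (b) the global sum $\sum_v\mu(v)$ is tiny because $G$ is already very close to $T_{n,r}$; combining (a) and (b) then forces $|L|$ to be small.

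For the per-vertex lower bound in (a), I will combine the degree hypothesis $d_G(v)\leq(\tfrac{r-1}{r}-5\varepsilon^{1/3})n$ with the part-size bound $|V_{i(v)}|\leq\tfrac{n}{r}+\varepsilon^{1/3}n$ from Claim \ref{cl2.1}. Since $d_G(v)-|N_G(v)\cap V_{i(v)}|\leq d_G(v)$, a direct subtraction gives $\mu(v)\geq 4\varepsilon^{1/3}n$ for each $v\in L$. For the global upper bound in (b), I observe that $\sum_v\mu(v)$ is twice the number of missing edges between distinct parts, so a short double-count yields
\[
\sum_v\mu(v)\;=\;n^2-\sum_{i=1}^r|V_i|^2-2e(G)+2\sum_{i=1}^r e(V_i).
\]
Into this I plug $e(G)\geq e(T_{n,r})-\varepsilon n^2\geq\tfrac{r-1}{2r}n^2-\tfrac{r}{8}-\varepsilon n^2$ from (\ref{eq2.2}), the convexity bound $\sum_i|V_i|^2\geq n^2/r$, and $\sum_i e(V_i)\leq\varepsilon n^2$ from Claim \ref{cl2.1}. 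The main terms telescope, leaving $\sum_v\mu(v)\leq 4\varepsilon n^2+\tfrac{r}{4}$.

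Assembling the two estimates gives $4\varepsilon^{1/3}n\cdot|L|\leq\sum_{v\in L}\mu(v)\leq\sum_v\mu(v)\leq 4\varepsilon n^2+\tfrac{r}{4}$, whence $|L|\leq\varepsilon^{2/3}n+\tfrac{r}{16\varepsilon^{1/3}n}$. Since $\varepsilon<4^{-3}\varphi^{-15}<\tfrac18$ by (\ref{eq2.1}) and $n$ is sufficiently large, this is comfortably below $\varepsilon^{1/3}n$. The only genuinely delicate step is the per-vertex bound $\mu(v)\geq 4\varepsilon^{1/3}n$: the coefficient $5$ in the definition of $L$ is calibrated precisely so that, after absorbing the $\varepsilon^{1/3}n$ slack coming from the imbalance $\bigl||V_i|-\tfrac{n}{r}\bigr|\leq\varepsilon^{1/3}n$, a linear surplus of $4\varepsilon^{1/3}n$ remains for the final counting. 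Everything else is a routine global double count using the stability information packaged in Claim \ref{cl2.1}.
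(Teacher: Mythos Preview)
Your proof is correct, but it follows a genuinely different route from the paper's. The paper argues by contradiction: assuming $|L|>\varepsilon^{1/3}n$, it deletes a set $L_0\subseteq L$ of size $\lfloor\varepsilon^{1/3}n\rfloor$ and shows that the remaining graph $G-L_0$ on $n_0$ vertices has $e(G-L_0)>e(T_{n_0,r})+\varepsilon n_0^2$ edges, which contradicts the stability Lemma~\ref{lem2.1} applied to $G-L_0$. In other words, the paper reinvokes global stability on a truncated graph, rather than using the partition of Claim~\ref{cl2.1} at all.

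Your argument instead stays with the partition $V_1,\ldots,V_r$ already produced by Claim~\ref{cl2.1} and does a direct double count of missing cross-edges. This is more self-contained (no second appeal to Lemma~\ref{lem2.1}) and in fact yields the sharper estimate $|L|\le \varepsilon^{2/3}n+O(1/n)$, comfortably below the stated $\varepsilon^{1/3}n$. The paper's route is a bit quicker to write down and highlights the stability mechanism; yours is more elementary and quantitatively tighter. Either approach serves the downstream argument equally well.
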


\begin{proof}
Suppose to the contrary that $|L|>\varepsilon^{\frac13} n$,
then there exists a subset $L_0\subseteq L$
with $|L_0|=\lfloor\varepsilon^{\frac13} n\rfloor$.
Set $n_0=|G-L_0|=n-\lfloor\varepsilon^{\frac13} n\rfloor$.
Clearly, $n_0<n-\varepsilon^{\frac13}n+1$,
and thus
\begin{equation}\label{eq2.3}
\frac{r-1}{2r} n_0^2<\frac{r-1}{2r}\Big((1-\varepsilon^{\frac13})n+1\Big)^2\leq \Big(\frac{r-1}{2r}-\frac{r-1}{r}\varepsilon^{\frac13}
+\varepsilon^{\frac23}\Big)n^2
\end{equation}
for $n$ sufficiently large.
Combining \eqref{eq2.2}, we obtain
\begin{align*}
e(G-L_0)&\geq  e(G)-\sum_{v\in L_0}d_G(v)\nonumber\\
&\geq \frac{r-1}{2r}n^2-2\varepsilon n^2-\varepsilon^{\frac13} n\Big(\frac{r-1}{r}-5\varepsilon^{\frac13}\Big)n\nonumber\\
&=\Big(\frac{r-1}{2r}-\frac{r-1}{r}\varepsilon^{\frac13}
+5\varepsilon^{\frac23}-2\varepsilon\Big)n^2 \nonumber\\
&>\frac{r-1}{2r}n_0^2+2\varepsilon n_0^2,
\end{align*}
where the last inequality follows from
\eqref{eq2.2}, $\varepsilon^{\frac23}
>\varepsilon$ and $n>n_0$.
This implies that
$e(G-L_0)>e(T_{n_0,r})+\varepsilon n_0^2$,
which contradicts Lemma \ref{lem2.1}
as $G-L_0$ is $\mathcal{H}$-free.
\end{proof}

\begin{claim}\label{cl2.3}
Let $R=\bigcup_{i=1}^{r}R_i$, where
$R_i=\{v\in V_i: d_{V_i}(v)\geq 2\varepsilon^{\frac13}n\}$.
Then $|R|\leq \frac12\varepsilon^{\frac13}n$.
\end{claim}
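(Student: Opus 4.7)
The plan is to prove Claim~\ref{cl2.3} by a short double-counting argument that combines the bound on $\sum_i e(V_i)$ from Claim~\ref{cl2.1} with the definition of $R_i$.

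First I would fix $i\in[r]$ and estimate $|R_i|$ by summing degrees inside $V_i$. Since every vertex $v\in R_i$ satisfies $d_{V_i}(v)\ge 2\varepsilon^{1/3}n$, we have
\begin{equation*}
2\varepsilon^{1/3}n\cdot|R_i|\;\le\;\sum_{v\in R_i}d_{V_i}(v)\;\le\;\sum_{v\in V_i}d_{V_i}(v)\;=\;2e(V_i),
\end{equation*}
which yields $|R_i|\le e(V_i)/(\varepsilon^{1/3}n)$.

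Summing over $i\in[r]$ and invoking Claim~\ref{cl2.1}, I get
\begin{equation*}
|R|\;=\;\sum_{i=1}^{r}|R_i|\;\le\;\frac{1}{\varepsilon^{1/3}n}\sum_{i=1}^{r}e(V_i)\;\le\;\frac{\varepsilon n^{2}}{\varepsilon^{1/3}n}\;=\;\varepsilon^{2/3}n.
\end{equation*}
Finally, by the choice of $\varepsilon$ in \eqref{eq2.1} we have $\varepsilon^{1/3}\le 1/2$, hence $\varepsilon^{2/3}n\le\tfrac12\varepsilon^{1/3}n$, which gives the claim.

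There is no real obstacle here; the claim is essentially an averaging observation, and the only thing to check is that the numerical constants in \eqref{eq2.1} are small enough to absorb $\varepsilon^{2/3}$ into $\tfrac12\varepsilon^{1/3}$, which is immediate since $\varphi\ge 5$ forces $\varepsilon<4^{-3}\cdot 5^{-15}\ll 1/8$.
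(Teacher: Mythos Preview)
Your proof is correct and follows essentially the same double-counting argument as the paper: bound $|R_i|$ via $\sum_{v\in R_i}d_{V_i}(v)\le 2e(V_i)$, sum over $i$, apply Claim~\ref{cl2.1} to get $|R|\le\varepsilon^{2/3}n$, and use \eqref{eq2.1} to conclude $\varepsilon^{2/3}n\le\tfrac12\varepsilon^{1/3}n$. The paper's presentation is virtually identical.
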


\begin{proof}
For each $i\in [r]$,
\begin{equation*}
  e(V_i)=\sum\limits_{v\in V_i}\frac12d_{V_i}(v)\geq
\sum\limits_{v\in R_i}\frac12d_{V_i}(v)\geq|R_i|\varepsilon^{\frac13}n.
\end{equation*}
Using Claim \ref{cl2.1} gives
$\varepsilon n^2\geq \sum\limits_{i=1}^{r}e(V_i)\geq
|R|\varepsilon^{\frac13}n,$
and thus $|R|\leq\varepsilon^{\frac23}n\leq
\frac12\varepsilon^{\frac13}n$ by \eqref{eq2.1}.
\end{proof}

\begin{claim}\label{cl2.4}
Let $i^*\in [r]$ and set $\overline{V}_i=V_i\setminus (R\cup L)$
for each $i\in [r]$.
Assume that $u_0\in \bigcup_{i\in[r]\setminus
\{i^*\}}(R_{i}\setminus L)$ and
$\{u_1,\dots,u_{\varphi^2}\}\subseteq
\bigcup_{i\in [r]\setminus\{i^*\}}\overline{V}_i$.
Then there exist at least $\varphi$ vertices in
$\overline{V}_{i^*}$ adjacent to $u_0,u_1,\dots,u_{\varphi^2}$.
\end{claim}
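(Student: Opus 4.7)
The plan is to count the common neighbors of $u_0,u_1,\ldots,u_{\varphi^2}$ in $\overline{V}_{i^*}$ directly via inclusion--exclusion, bounding the non-neighbors of each $u_j$ separately. For the typical vertices $u_j$ with $1\leq j\leq\varphi^2$, this step is straightforward: since $u_j\in\overline{V}_{i_j}$ for some $i_j\neq i^*$, we have both $u_j\notin L$ (so $d_G(u_j)\geq(\frac{r-1}{r}-5\varepsilon^{\frac13})n$) and $u_j\notin R_{i_j}$ (so $d_{V_{i_j}}(u_j)<2\varepsilon^{\frac13}n$). Combined with the size estimate $n-|V_{i_j}|\leq\frac{r-1}{r}n+\varepsilon^{\frac13}n$ from Claim \ref{cl2.1}, a short calculation forces $u_j$ to have at most $8\varepsilon^{\frac13}n$ non-neighbors outside $V_{i_j}$, and in particular at most $8\varepsilon^{\frac13}n$ non-neighbors in $V_{i^*}$.

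The subtle step is controlling $u_0$, because its membership in $R_i$ allows $d_{V_i}(u_0)$ to be as large as $\Theta(n)$, so the previous argument fails. Here I will exploit the extremal choice of the partition: since $\sum_{a<b}e(V_a,V_b)$ is maximized, a swapping argument (moving $u_0$ from $V_i$ to $V_\ell$ changes the cross-edge count by $d_{V_i}(u_0)-d_{V_\ell}(u_0)$, which must be $\leq 0$) gives $d_{V_\ell}(u_0)\geq d_{V_i}(u_0)$ for every $\ell\in[r]\setminus\{i\}$. Summing over all $\ell$ yields $d_{V_i}(u_0)\leq d_G(u_0)/r$, and therefore
\[
d_{V_{i^*}}(u_0)\geq d_G(u_0)-d_{V_i}(u_0)-\sum_{\ell\neq i,i^*}|V_\ell|\geq \frac{r-1}{r}d_G(u_0)-(r-2)\Big(\frac{n}{r}+\varepsilon^{\frac13}n\Big).
\]
Plugging in $d_G(u_0)\geq(\frac{r-1}{r}-5\varepsilon^{\frac13})n$ and using the identity $(r-1)^2-r(r-2)=1$ gives the key estimate $d_{V_{i^*}}(u_0)\geq \frac{n}{r^2}-O(\varepsilon^{\frac13}n)$.

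Combining the two bounds via inclusion--exclusion and absorbing $|R\cup L|\leq \frac{3}{2}\varepsilon^{\frac13}n$ from Claims \ref{cl2.2} and \ref{cl2.3}, the number of common neighbors in $\overline{V}_{i^*}$ is at least
\[
|N(u_0)\cap \overline{V}_{i^*}|-\sum_{j=1}^{\varphi^2}|\overline{V}_{i^*}\setminus N(u_j)|\geq \frac{n}{r^2}-O(\varphi^2\varepsilon^{\frac13}n).
\]
The hypothesis $\varepsilon<4^{-3}\varphi^{-15}$ combined with $\varphi\geq\max\{r,5\}$ forces $\varphi^2\varepsilon^{\frac13}$ to be much smaller than $\frac{1}{r^2}$, so this lower bound exceeds $\varphi$ once $n$ is sufficiently large. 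The main obstacle is the lower bound on $d_{V_{i^*}}(u_0)$: a naive use of the degree condition $u_0\notin L$ would allow all of $u_0$'s missing neighbors to concentrate in $V_{i^*}$, and the maximality of the cross-edge count (captured by the swapping inequality) is precisely what rules out this pathology.
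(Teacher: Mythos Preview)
Your proposal is correct and follows essentially the same approach as the paper: both arguments use the maximality of the partition to deduce $d_{V_i}(u_0)\leq d_G(u_0)/r$ for the exceptional vertex $u_0$, obtain $d_{V_{i^*}}(u_0)\geq n/r^2-O(\varepsilon^{1/3}n)$, and then combine this with the easy non-neighbor bound for the remaining $u_j$ via inclusion--exclusion before subtracting $|R\cup L|$. The only cosmetic difference is that the paper bounds $d_{V_{i^*}}(u_j)$ from below for every $j$ and applies $\sum_j d_{V_{i^*}}(u_j)-\varphi^2|V_{i^*}|$, whereas you bound non-neighbors of each $u_j$ ($j\geq 1$) from above and subtract them from $|N(u_0)\cap\overline{V}_{i^*}|$; these are equivalent formulations.
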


\begin{proof}
Suppose $u_0\in R_{i_0}\setminus L$
for some ${i_0}\in [r]\setminus \{i^*\}$.
Then $d_G(u_0)>(\frac{r-1}{r}-5\varepsilon^{\frac13})n$
as $u_0\notin L$.
By Claim \ref{cl2.1},
$d_{V_{i}}(u_0)\leq |V_{i}|
\leq(\frac{1}{r}+\varepsilon^{\frac13})n$ for any $i\in [r]$.
Since $\bigcup_{i=1}^{r}V_i$
is a partition of $V(G)$ such that $\sum_{1\leq i<j\leq r}e(V_i,V_j)$
attains the maximum, we have
$d_{V_{{i_0}}}(u_0)\leq \frac{1}{r}d_{G}(u_0)$.
Thus,
\begin{align}\label{eq2.4}
d_{V_{i^*}}(u_0)
&=d_G(u_0)-d_{V_{i_0}}(u_0)-
\!\!\!\!\!\sum_{i\in [r]\setminus\{i^*,i_0\}}\!\!\!\!\!d_{V_{i}}(u_0)
\geq \frac{r-1}{r}d_G(u_0)-\big(r-2\big)\Big(\frac{1}{r}
+\varepsilon^{\frac13}\Big)n\nonumber\\
&>\Big(\frac{1}{r^2}-(r+3)\varepsilon^{\frac13}\Big)n
>\Big(\frac{1}{r^2}-2\varphi\varepsilon^{\frac13}\Big)n,
\end{align}
where the last inequality holds as $r+3<2\varphi$
by \eqref{eq2.1}.

Now, choose an arbitrary $j\in [\varphi^2]$.
Assume that $u_j\in \overline{V}_{i_j}$
for some $i_j\in [r]\setminus\{i^*\}$.
Then $u_j\notin R_{i_j}\cup L$ by the definition of $\overline{V}_{i_j}$.
Hence, $d_{V_{i_j}}(u_j)<2\varepsilon^{\frac13}n$ and $d_G(u_j)>\big(\frac{r-1}{r}-5\varepsilon^{\frac13}\big)n$.
Clearly, $d_{V_i}(u_j)\leq |V_{i}|\leq
(\frac1r+\varepsilon^{\frac13})n$
for any $i\in [r]$.
Thus,
\begin{align}\label{eq2.5}
d_{V_{i^*}}(u_j)
&=d_G(u_j)-d_{V_{i_j}}(u_j)
-\!\!\!\!\!\sum_{i\in [r]\setminus\{i^*,i_j\}}\!\!\!\!\!d_{V_{i}}(u_j)
>\Big(\frac{r-1}{r}-7\varepsilon^{\frac13}\Big)n
-\big(r-2\big)\Big(\frac{1}{r}+\varepsilon^{\frac13}\Big)n  \nonumber\\
&=\Big(\frac{1}{r}-(r+5)\varepsilon^{\frac13}\Big)n
\geq\Big(\frac{1}{r}-2\varphi\varepsilon^{\frac13}\Big)n.
\end{align}
where the last inequality holds as $r+5\leq 2\varphi$
by \eqref{eq2.1}.
Combining \eqref{eq2.4} and \eqref{eq2.5},
we have
\begin{align*}
\Big|\bigcap_{j=0}^{\varphi^2}N_{V_{i^*}}(u_j)\Big|
&\geq\sum_{j=0}^{\varphi^2}d_{V_{i^*}}(u_j)-\varphi^2|V_{i^*}|\\
&>\Big(\frac{1}{r^2}-2\varphi\varepsilon^{\frac13}\Big)n
+\varphi^2\Big(\frac{1}{r}-2\varphi\varepsilon^{\frac13}\Big)n
-\varphi^2\Big(\frac{1}{r}+\varepsilon^{\frac13}\Big)n\\
&=\Big(\frac{1}{r^2}-
(2\varphi+2\varphi^3+\varphi^2)\varepsilon^{\frac13}\Big)n.
\end{align*}
In view of \eqref{eq2.1},
we have $\varepsilon^{\frac13}<\frac{1}{4\varphi^{5}}$
and $\varphi\geq r\geq2$. Thus,
$\frac 1{r^2}\geq\frac 1{\varphi^2}>4\varphi^3\varepsilon^{\frac13}$
and $2\varphi+2\varphi^3+\varphi^2\leq 3\varphi^3$.
It follows that
$\big|\bigcap_{j=0}^{\varphi^2}N_{V_{i^*}}(u_j)\big|
\geq \varphi^3\varepsilon^{\frac13}n.$
Furthermore,
$|R\cup L|\leq \frac32\varepsilon^{\frac13}n$
by Claims \ref{cl2.2} and \ref{cl2.3}.
Thus,
$\big|\bigcap_{j=0}^{\varphi^2}N_{V_{i^*}}(u_j)\big|
>|R\cup L|+\varphi$ for $n$ sufficiently large.
Therefore, there exist at least $\varphi$ vertices
in $\overline{V}_{i^*}$ adjacent to $u_0,u_1,\dots,u_{\varphi^2}$.
\end{proof}

\begin{claim}\label{cl2.5}
$R\subseteq L$.
Moreover, $\Delta\big(G[\overline{V}_i]\big)\leq
\Delta-1$ for each $i\in [r]$.
\end{claim}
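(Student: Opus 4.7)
The plan is to prove both statements simultaneously by contradiction. Suppose either $R\not\subseteq L$, giving some $u_0\in R_{i_0}\setminus L$, or $\Delta(G[\overline{V}_{i_0}])\geq\Delta$ for some $i_0\in[r]$, giving some $u_0\in\overline{V}_{i_0}$ with at least $\Delta$ neighbors in $\overline{V}_{i_0}$. In the first case, since $d_{V_{i_0}}(u_0)\geq 2\varepsilon^{1/3}n$ while $|R\cup L|\leq \tfrac{3}{2}\varepsilon^{1/3}n$ by Claims \ref{cl2.2}--\ref{cl2.3}, one may select $\Delta$ neighbors $v_1,\ldots,v_\Delta$ of $u_0$ inside $\overline{V}_{i_0}$; in the second case such $v_1,\ldots,v_\Delta$ exist by assumption. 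Either way, $\{u_0,v_1,\ldots,v_\Delta\}$ spans a copy of $S_{\Delta+1}$ lying entirely in $V_{i_0}$.

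Since $S_{\Delta+1}\in\mathcal{M}(\mathcal{H})$, Definition \ref{def1.1} yields an $H\in\mathcal{H}$ with $H\subseteq (S_{\Delta+1}\cup E_\phi)+T_{(r-1)\phi,r-1}$, so it suffices to embed the right-hand side in $G$ to contradict $\mathcal{H}$-freeness. First build the Tur\'an part iteratively: enumerate $[r]\setminus\{i_0\}$ as $i_1,\ldots,i_{r-1}$, and for $t=1,\ldots,r-1$ apply Claim \ref{cl2.4} with $i^*=i_t$ and conditioning vertices $u_0,v_1,\ldots,v_\Delta$ together with the previously chosen $w_{i_s,j}$ for $s<t$, $j\in[\phi]$ (padded arbitrarily from $\bigcup_{i\ne i^*}\overline{V}_i$ up to $\varphi^2$). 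The number of conditioning vertices never exceeds $(\Delta+1)+(r-2)\phi\leq \varphi^2$ thanks to $\varphi\geq\max\{r,\phi+1\}$, so Claim \ref{cl2.4} returns at least $\varphi\geq\phi$ common neighbors in $\overline{V}_{i_t}$; pick $\phi$ of them as $w_{i_t,1},\ldots,w_{i_t,\phi}$. In the second case $u_0\in\overline{V}_{i_0}$ rather than $R\setminus L$, but inspecting the proof of Claim \ref{cl2.4} shows that the $\overline{V}$-type bound \eqref{eq2.5} applies to $u_0$ as well, which is stronger than the $R\setminus L$ bound \eqref{eq2.4}, so the common-neighborhood count goes through unchanged.

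To complete the embedding, adjoin $\phi$ further vertices $x_1,\ldots,x_\phi\in V_{i_0}\setminus(R\cup L\cup\{u_0,v_1,\ldots,v_\Delta\})$, each adjacent to every $w_{i,j}$; these vertices realize the $E_\phi$ factor, for which no internal edges are required. Every $w_{i,j}\in\overline{V}_i$ satisfies $d_{V_{i_0}}(w_{i,j})\geq \bigl(\tfrac{1}{r}-2\varphi\varepsilon^{1/3}\bigr)n$ by \eqref{eq2.5}, so an intersection-of-neighborhoods estimate produces at least $\bigl(\tfrac{1}{r}-O(\varepsilon^{1/3})\bigr)n$ common neighbors in $V_{i_0}$, far exceeding $\phi+|R\cup L|+\Delta+1$ by \eqref{eq2.1} and large $n$. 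The vertex set $\{u_0,v_1,\ldots,v_\Delta,x_1,\ldots,x_\phi\}\cup\{w_{i,j}:i\in[r]\setminus\{i_0\},\,j\in[\phi]\}$ then supports a subgraph isomorphic to $(S_{\Delta+1}\cup E_\phi)+T_{(r-1)\phi,r-1}$, hence a copy of $H$, contradicting $\mathcal{H}$-freeness of $G$.

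The main obstacle is purely organizational: one must verify that Claim \ref{cl2.4} can be invoked (or trivially adapted in the second case) at each step of the iterative construction without breaching the $\varphi^2$ budget on conditioning vertices, and that the final pool of $x_1,\ldots,x_\phi\in V_{i_0}$ can be selected to avoid the small exceptional set $R\cup L\cup\{u_0,v_1,\ldots,v_\Delta\}$. Both are handled comfortably by the slack built into \eqref{eq2.1} together with $\varphi\geq\max\{r,\phi+1\}\geq 5$.
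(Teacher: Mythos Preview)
Your proof is correct and follows essentially the same strategy as the paper's: locate a copy of $S_{\Delta+1}$ in one class and iteratively apply Claim~\ref{cl2.4} to build the complete multipartite scaffold, contradicting $\mathcal{H}$-freeness via $S_{\Delta+1}\in\mathcal{M}(\mathcal{H})$. The only cosmetic differences are that the paper treats the two assertions separately and takes $\varphi$-subsets in every class from the outset (so the $E_\phi$ vertices come for free as surplus leaves of $S_\varphi$, making your separate common-neighborhood step for $x_1,\ldots,x_\phi$ unnecessary), while you are actually more explicit than the paper in noting that the $\overline{V}$-bound~\eqref{eq2.5} can replace~\eqref{eq2.4} when $u_0\notin R\setminus L$.
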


\begin{proof}
We first show $R\subseteq L$ by contradiction.
Suppose that there exists $u_0\in R\setminus L$.
We may assume that $u_0\in R_1$.
Then $d_{V_1}(u_0)\geq 2\varepsilon^{\frac13}n$
by the definition of $R_1$.
However, by Claims \ref{cl2.2} and \ref{cl2.3},
we have $|R\cup L|\leq\frac32\varepsilon^{\frac13}n.$
Thus, $d_{V_1}(u_0)\geq|R\cup L|+\varphi-1$,
and hence there exists a subset
$\{u_j: j\in[\varphi-1]\}\subseteq N_{\overline{V}_1}(u_0)$.
Set $\overline{V}_1'=\{u_0,u_1,\ldots,u_{\varphi-1}\}$.
Then by Claim \ref{cl2.4},
there exists a $\varphi$-subset
$\overline{V}_2'\subseteq \overline{V}_2$
such that these $\varphi$ vertices in $\overline{V}_2'$
are adjacent to each vertex in $\overline{V}_1'$.
Recursively applying Claim \ref{cl2.4},
we can find a sequence of $\varphi$-subsets
$\overline{V}_2',\dots,\overline{V}_r'$
such that for every $k\in [r]\setminus \{1\}$,
$\overline{V}_k'\subseteq\overline{V}_k$ and these $\varphi$ vertices in
$\overline{V}_k'$ are adjacent to each vertex in
$\bigcup_{i=1}^{k-1}\overline{V}_i'$, as $\big|\bigcup_{i=1}^{r-1}\overline{V}_i'\big|\leq\varphi^2+1$.
Now, one can see that $G[\bigcup_{i=1}^{r}\overline{V}_i']$
contains a spanning subgraph obtained from $T_{r\varphi,r}$ by embedding
a copy of $S_\varphi$ in the first partite set $\overline{V}_1'$,
where $\varphi\geq\phi+1\geq\Delta+1$ by (\ref{eq2.1}).
However, the condition (ii) indicates that
$S_{\Delta+1}\in \mathcal{M}(\mathcal{H})$.
By Definition \ref{def1.1},
$G[\bigcup_{i=1}^{r}\overline{V}_i']$
contains some member of $\mathcal{H}$ as a subgraph,
a contradiction.
Therefore, $R\subseteq L$.

Now we show that $\Delta\big(G[\overline{V}_i]\big)
\leq\Delta-1$ for each $i\in[r]$.
Suppose to the contrary that $\Delta\big(G[\overline{V}_i]\big)\geq\Delta$
for some $i\in[r]$.
We may assume that
$\Delta\big(G[\overline{V}_1]\big)\geq\Delta$
and $u_0$ is a vertex of maximum degree in $G[\overline{V}_1]$.
Then, there exists a vertex subset $\{u_j: j\in[\Delta]\}\subseteq N_{\overline{V}_1}(u_0)$.
By Claims \ref{cl2.1}, \ref{cl2.2} and \ref{cl2.3}, we have
\begin{align}\label{eq2.6}
|\overline{V}_i|\geq |V_i|-|L|-|R|
\geq \Big(\frac{1}{r}-\frac{5}{2}\varepsilon^{\frac13}\Big)n
\geq \varphi\geq \Delta+1.
\end{align}
Then we can choose a $\varphi$-subset $\overline{V}_1'\subseteq\overline{V}_1$
such that $\{u_0,u_1,\ldots,u_\Delta\}\subseteq \overline{V}_1'$.
Recursively applying Claim \ref{cl2.4},
we can find a sequence of $\varphi$-subsets
$\overline{V}_2',\dots,\overline{V}_r'$
such that for each $k\in [r]\setminus \{1\}$,
$\overline{V}_k'\subseteq
\overline{V}_k$ and these $\varphi$ vertices in
$\overline{V}_k'$ are adjacent to each vertex in
$\bigcup_{i=1}^{k-1}\overline{V}_i'$, as $\big|\bigcup_{i=1}^{r-1}\overline{V}_i'\big|\leq\varphi^2$.
By a similar discussion as above,
$G[\bigcup_{i=1}^{r}\overline{V}_i']$
contains a member of $\mathcal{H}$ as a subgraph,
a contradiction.
Hence, $\Delta\big(G[\overline{V}_1]\big)\leq\Delta-1$.
This completes the proof.
\end{proof}

\begin{claim}\label{cl2.6}
For every $i\in [r]$, we have
$\nu\big(G[\overline{V}_i]\big)\leq\nu-1$,
and thus $G[\overline{V}_i]$ contains
an independent set $\overline{I}_i$
with $|\overline{V}_i\setminus \overline{I}_i|\leq 2(\nu-1)$.
\end{claim}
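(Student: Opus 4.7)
The plan is to mirror the argument of Claim~\ref{cl2.5}, replacing the role of $S_{\Delta+1}$ with that of $M_{2\nu}$ and invoking $M_{2\nu}\in\mathcal{M}(\mathcal{H})$ in place of $S_{\Delta+1}\in\mathcal{M}(\mathcal{H})$, and then to deduce the independent set $\overline{I}_i$ from the standard fact that the unsaturated vertices of a maximum matching form an independent set.

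For the matching upper bound I would argue by contradiction. Suppose $\nu\big(G[\overline{V}_{i^*}]\big)\geq\nu$ for some $i^*\in[r]$; by relabeling take $i^*=1$. Fix a matching of $\nu$ edges in $G[\overline{V}_1]$, with vertex set $S$ of size $2\nu$. Since $\nu\leq\lfloor\phi/2\rfloor$ gives $2\nu\leq\phi<\varphi$, and since $|\overline{V}_1|\geq\varphi$ by \eqref{eq2.6}, I would extend $S$ to a $\varphi$-subset $\overline{V}_1'\subseteq\overline{V}_1$. Then, exactly as in the proof of Claim~\ref{cl2.5}, I would recursively apply Claim~\ref{cl2.4} to build $\varphi$-subsets $\overline{V}_2',\dots,\overline{V}_r'$ with $\overline{V}_k'\subseteq\overline{V}_k$ and each vertex of $\overline{V}_k'$ adjacent to every vertex of $\bigcup_{j<k}\overline{V}_j'$, since $\big|\bigcup_{j=1}^{k-1}\overline{V}_j'\big|\leq\varphi^2$ fits within the budget of Claim~\ref{cl2.4}. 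Because $M_{2\nu}\in\mathcal{M}(\mathcal{H})$, Definition~\ref{def1.1} provides some $H\in\mathcal{H}$ with $H\subseteq(M_{2\nu}\cup E_\phi)+T_{(r-1)\phi,r-1}$. Since $|V(H)|\leq\phi\leq\varphi-1$, one embeds $H$ into $G\big[\bigcup_{i=1}^{r}\overline{V}_i'\big]$ by sending the vertices of $H$ in $V(M_{2\nu})$ to the matched vertices of $\overline{V}_1'$ (thereby preserving the needed matching edges), the vertices of $H$ in $V(E_\phi)$ to any other vertices of $\overline{V}_1'$, and the vertices of $H$ in $V(T_{(r-1)\phi,r-1})$ to $\overline{V}_2',\dots,\overline{V}_r'$ respecting the partition; the cross-part edges of $H$ are then automatic from the complete join structure we built, and extra edges in the host cause no harm since $H$ is required only as a subgraph. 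This exhibits a copy of $H$ inside $G$, contradicting the $\mathcal{H}$-freeness of $G$. Therefore $\nu\big(G[\overline{V}_i]\big)\leq\nu-1$ for every $i\in[r]$.

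For the second conclusion I would take any maximum matching $N$ of $G[\overline{V}_i]$. The bound just proved gives $|N|\leq\nu-1$, so $N$ saturates at most $2(\nu-1)$ vertices. Let $\overline{I}_i$ be the set of vertices of $\overline{V}_i$ not saturated by $N$; any edge within $\overline{I}_i$ could be adjoined to $N$, contradicting the maximality of $N$. Hence $\overline{I}_i$ is independent in $G[\overline{V}_i]$ and $|\overline{V}_i\setminus\overline{I}_i|\leq 2(\nu-1)$.

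The only real subtlety is to verify that the embedding of $H$ genuinely fits inside a host whose parts have merely $\varphi=\max\{r,\phi+1,5\}$ vertices; in particular the Simonovits graph $(M_{2\nu}\cup E_\phi)+T_{(r-1)\phi,r-1}$ itself is generally too large for each $\overline{V}_k'$ to literally contain its respective part. The resolution, already used tacitly in Claim~\ref{cl2.5}, is that it suffices to embed the smaller target $H\in\mathcal{H}$, whose $|V(H)|\leq\phi\leq\varphi-1$ slots split across the $r$ parts without exceeding the $\varphi$-vertex capacity of any part; the matching structure needed inside $\overline{V}_1'$ is furnished by $M_{2\nu}$ and nothing more is required. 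Beyond this observation, the entire argument is parallel to the star case of Claim~\ref{cl2.5}.
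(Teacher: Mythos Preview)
Your proposal is correct and follows essentially the same argument as the paper's proof: both argue by contradiction, extend the $\nu$ matching edges to a $\varphi$-subset $\overline{V}_1'$, recursively apply Claim~\ref{cl2.4} to build the $\overline{V}_k'$'s, and then invoke $M_{2\nu}\in\mathcal{M}(\mathcal{H})$ via Definition~\ref{def1.1}; the independent set is extracted identically from the unsaturated vertices of a maximum matching. Your final paragraph on the embedding subtlety is in fact more explicit than what the paper writes, which simply asserts that $G[\bigcup_{i}\overline{V}_i']$ contains a member of $\mathcal{H}$ by Definition~\ref{def1.1}.
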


\begin{proof}
Suppose to the contrary that there exists some $i$,
say $i=1$, with $\nu\big(G[\overline{V}_i]\big)\geq\nu$.
Let $u_1u_2,u_3u_4,\dots,u_{2\nu-1}u_{2\nu}$
be $\nu$ independent edges in $G[\overline{V}_1]$.
In view of (\ref{eq2.1}) and the condition (ii),
we have $\varphi\geq\phi+1\geq2\nu$.
Furthermore, $|\overline{V}_1|\geq \varphi$ by (\ref{eq2.6}).
Hence, one can choose a $\varphi$-subset
$\overline{V}_1'\subseteq \overline{V}_1$ such that
$\{u_j: j\in[2\nu]\}\subseteq \overline{V}_1'$.
By Claim \ref{cl2.4}, there exists a $\varphi$-subset
$\overline{V}_2'\subseteq \overline{V}_2$
such that all vertices in $\overline{V}_2'$
are adjacent to each vertex in $\overline{V}_1'$.
Recursively applying Claim \ref{cl2.4},
we can find a sequence of $\varphi$-subsets
$\overline{V}_2',\dots,\overline{V}_r'$
such that for every $k\in[r]\setminus\{1\}$,
$\overline{V}_k'\subseteq\overline{V}_k$ and all vertices in
$\overline{V}_k'$ are adjacent to each vertex in
$\bigcup_{i=1}^{k-1}\overline{V}_i'$.
Now, we can observe that $G[\bigcup_{i=1}^{r}\overline{V}_i']$
contains a subgraph obtained from $T_{r\varphi,r}$ by embedding
$M_{2\nu}$ in the first partite set $\overline{V}_1'$.
However, the condition (ii) indicates that
$M_{2\nu}\in\mathcal{M}(\mathcal{H})$.
By Definition \ref{def1.1},
$G[\bigcup_{i=1}^{r}\overline{V}_i']$
contains a member of $\mathcal{H}$ as a subgraph,
a contradiction.
Therefore, $\nu\big(G[\overline{V}_i]\big)\leq\nu-1$
for each $i\in[r]$.

Now, assume that
$\nu\big(G[\overline{V}_i]\big)=\nu_i$ for $i\in[r]$.
Then $\nu_i\leq \nu-1$ by the above discussion.
Let $\{u_1u_2,\dots,u_{2\nu_i-1}u_{2\nu_i}\}$
be a maximum matching of $G[\overline{V}_i]]$ and $\overline{I}_i=\overline{V}_i\setminus\{u_j: j\in[2\nu_i]\}.$
Clearly, $\overline{I}_i$ is an independent set and
$|\overline{V}_i\setminus\overline{I}_i|=2\nu_i\leq2(\nu-1)$.
\end{proof}

Now we give the proof of Theorem \ref{thm1.2}.

\begin{proof}
It is obvious that ($iii$)$\Rightarrow$ ($i$) holds.
Now we show ($i$)$\Rightarrow$ ($ii$).
Let $T_{n,r}'$ be the graph obtained
from $T_{n,r}$ by embedding
a maximum matching into one partite set of size $\lceil \frac nr\rceil$.
Since $e(T_{n,r}')\geq e(T_{n,r})+\lfloor\frac{n}{2r}\rfloor>{\rm ex}(n,\mathcal{H})$,
$T_{n,r}'$ contains a member $H\in \mathcal{H}$.
We can further observe that $T_{r\phi,r}'$
contains a copy of $H$ as $|H|\leq \phi$.
By Definition \ref{def1.1},
$M_{2\lfloor\frac\phi{2}\rfloor}$ contains a member
$M'\in \mathcal{M}(\mathcal{H})$,
which implies that $M'=M_{2\nu}$
for some $\nu\leq \lfloor\frac{\phi}{2}\rfloor$.

On the other hand, since
$e(K_1+T_{n-1,r})\geq e(T_{n,r})+\lfloor\frac{n-1}{r}\rfloor>{\rm ex}(n,\mathcal{H})$,
$K_1+T_{n-1,r}$ contains a member $H\in \mathcal{H}$.
We can further observe that $K_1+T_{r\phi,r}$
contains a copy of $H$ as $|H|\leq \phi$.
Thus, $S_{\phi+1}$ contains a member $M''\in \mathcal{M}(\mathcal{H})$,
which implies that $M''=S_{\Delta+1}$
for some integer $\Delta\leq \phi$.
Therefore, ($i$)$\Rightarrow$ ($ii$) holds.

In what follows, it suffices to show
($ii$)$\Rightarrow$ ($iii$).
Let $G\in {\rm EX}(n,\mathcal{H})$.
Then $G$ is $\mathcal{H}$-free.
Recall that $\chi(H)\geq r+1$ for every $H\in \mathcal{H}$.
Hence, $T_{n,r}$ is also $\mathcal{H}$-free.
Thus,
\begin{equation*}
e(G)\geq e(T_{n,r})\geq \frac{r-1}{2r}n^2-\frac{r}{8}
>\Big(\frac{r-1}{r}n^2-\delta\Big)\frac{n^2}2
\end{equation*}
for some constant $\delta>0$.
By Lemma \ref{lem2.1}, $G$ can be obtained from
$T_{n,r}$ by adding and deleting at most $\varepsilon n^2$ edges, where $\varepsilon<4^{-3}\varphi^{-15}$.
Therefore, $G$ is $\mathcal{H}$-good,
and thus Claims \ref{cl2.1}-\ref{cl2.6} hold.

By Claims \ref{cl2.2} and \ref{cl2.3},
we have $|L\cup R|\leq \frac{3}{2}\varepsilon^{\frac13}n$.
By Claim \ref{cl2.6},
$\sum_{i\in [r-1]}|\overline{V}_i\setminus\overline{I}_i|
\leq 2(\nu-1)(r-1)\leq\frac{1}{2}\varepsilon^{\frac13}n$.
Furthermore, $|V_r|\leq\frac nr+\varepsilon^{\frac13}n$
by Claim \ref{cl2.1}. Thus,
\begin{align}\label{eq2.7}
\!\!\sum_{i\in [r-1]}\!\!|\overline{I}_i|
\geq\!\!\sum_{i\in [r-1]}\!\!|V_i|-
\!\!\sum_{i\in [r-1]}\!\!|\overline{V}_i\setminus\overline{I}_i|
-|L\cup R|
\geq n-|V_r|-2\varepsilon^{\frac13}n
\geq \Big(1-\frac1{r}-3\varepsilon^{\frac13}\Big)n.
\end{align}

Now, we show that $L$ is an empty set.
Suppose that $L\neq \varnothing$.
Let $u_0\in L$ and $G'$ be the graph obtained from $G$
by deleting all edges incident to $u_0$
and joining all possible edges between $u_0$ and
$\bigcup_{i\in [r-1]}\overline{I}_i$.
By the definitions of $\overline{I}_i$ and $L$,
we have $u_0\notin\bigcup_{i\in [r]}\overline{I}_i$
and $d_G(u_0)\leq \big(1-\frac1r-5\varepsilon^{\frac13}\big)n$.
Combining \eqref{eq2.7} gives
$d_G(u_0)\leq\sum_{i\in [r-1]}|\overline{I}_i|
-2\varepsilon^{\frac13}n.$
Hence, $e(G')>e(G)$.
Since $G\in \mathrm{EX}(n,\mathcal{H})$,
we can see that $G'$ contains a member $H\in \mathcal{H}$
and $u_0\in V(H)$.
By the definitions of
$\phi$ and $\varphi$, we have
$d_H(u_0)\leq|H|-1\leq\phi-1\leq\varphi-2$. Moreover,
$N_{H}(u_0)\subseteq\bigcup_{i\in[r-1]}\overline{I}_i
\subseteq\bigcup_{i\in[r-1]}\overline{V}_i$
by the definition of $G'$.
By Claim \ref{cl2.4}, we can find $\varphi$ vertices
in $\overline{V}_{r}$ (and thus a vertex $u\in\overline{V}_{r}\setminus V(H)$)
adjacent to every vertex in $N_{H}(u_0)$.
This implies that $G[(V(H)\setminus\{u_0\})\cup\{u\}]$
contains a copy of $H$, a contradiction.
Hence, $L=\varnothing$.
By Claim \ref{cl2.5},
we further obtain $R=L=\varnothing$.

Since $\overline{V}_i=V_i\setminus (R\cup L)$,
we have $\overline{V}_i=V_i$ for $i\in [r]$.
This, together with Claims  \ref{cl2.5} and \ref{cl2.6},
implies that $\Delta(G[V_i])\leq \Delta-1$ and $\nu(G[V_i])\leq \nu-1$
for $i\in [r]$.
Combing Lemma \ref{lem2.2}, we get that
\begin{align}\label{eq2.8}
e(G[V_i])\leq f(\nu-1,\Delta-1)\leq (\nu-1)\Delta.
\end{align}

Let $\widehat{V}_i$ be the set of isolated vertices in $G[V_i]$.
Then by (\ref{eq2.8}),
$|V_i\setminus \widehat{V}_i|\leq 2e(G[V_i])\leq
\varepsilon^{\frac13}n$ for $n$ sufficiently large.
Moreover, $|V_i|\geq\frac nr-\varepsilon^{\frac13}n$
by Claim \ref{cl2.1}. It follows that
$|\widehat{V}_i|=|V_i|-|V_i\setminus \widehat{V}_i|\geq  \Big(\frac1{r}-2\varepsilon^{\frac13}\Big)n$
for each $i\in[r].$

Now, we complete the proof of ($ii$)$\Rightarrow$ ($iii$).
We first show $\lfloor\frac nr\rfloor\leq |V_i|
\leq\lceil\frac nr\rceil$ for each $i\in[r].$
Suppose to the contrary that
there exist two distinct integers $i,j\in [r]$
such that $|V_i|-|V_j|\geq 2$.
Choose an isolated vertex $u_0\in \widehat{V}_i$,
and let $G'$ be the graph obtained from $G$
by deleting all edges from $u_0$ to $V_j$
and adding all edges from $u_0$ to $V_i\setminus \{u_0\}$.
It is not hard to verify that $G'$ is still $\mathcal{H}$-free
and $e(G')-e(G)\geq(|V_i|-1)-|V_j|\geq1$.
This contradicts the fact that
$G\in \mathrm{EX}(n,\mathcal{H})$.

Let $K$ denote the complete $r$-partite
graph with partite sets $V_1,V_2,\dots,V_r$.
Then $K\cong T_{n,r}$, and $K$ is $\mathcal{H}$-free.
Furthermore, let $G_{in}$ and $G_{cr}$
be two subgraphs of $G$ induced by edges
in $E(G)\setminus E(K)$
and $E(K)\setminus E(G)$, respectively.
Then $e(G_{in})=\sum_{i=1}^{r}e(G[V_i])$,
and by \eqref{eq2.8} we have $e(G_{in})\leq r(\nu-1)\Delta$.
On the other hand,
since $G\in \mathrm{EX}(n,\mathcal{H})$ and $K$ is
$\mathcal{H}$-free,
we have $e(K)\leq e(G)=e(K)+e(G_{in})-e(G_{cr})$,
which implies that
$e(G_{cr})\leq e(G_{in})\leq r(\nu-1)\Delta.$
Therefore, (iii) holds.

This completes the proof of Theorem \ref{thm1.2}.
\end{proof}

\section{Proof of Theorem \ref{thm1.4}}\label{3}

Let $\mathbb{G}(F_1,\ldots,F_k)$ denote the set of
graphs which consist of $k$ edge-disjoint
color-critical graphs $F_1,F_2,\ldots,F_k$,
where $\chi(F_j)=r+1\geq3$ for each $j\in[k]$.
In this section, we characterize
$\mathrm{EX}\big(n,\mathbb{G}(F_1,\ldots,F_k)\big).$
We need to introduce two lemmas.
The first one is due to Simonovits and plays an important role in extremal graph theory.

\begin{lem}\label{lem4.1}\emph{(\!\cite{Simonovits-1968})}
Let $r\geq 2$ and $H$ be a color-critical graph
with $\chi(H)=r+1$.
Then $T_{n,r}$ is the unique extremal graph
with respect to ${\rm ex}(n,H)$ for $n$ sufficiently large.
\end{lem}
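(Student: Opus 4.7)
The plan is to derive Lemma \ref{lem4.1} as a corollary of Theorem \ref{thm1.2} by computing the decomposition family of a single color-critical graph. This route is attractive because all of the structural analysis has already been absorbed into the proof of Theorem \ref{thm1.2} in Section \ref{2}, and no fresh stability or embedding argument is needed.

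First I would verify that $K_2\in\mathcal{M}(\{H\})$. Let $e=xy$ be a critical edge of $H$, so $\chi(H-e)=r$. In any proper $r$-coloring of $H-e$, the endpoints $x$ and $y$ must share a color class, for otherwise the same coloring would properly color $H$, contradicting $\chi(H)=r+1$. Writing $\phi=|H|$, one can then embed $H$ into $(K_2\cup E_{\phi-2})+T_{(r-1)\phi,r-1}$ by sending $\{x,y\}$ to the two endpoints of the $K_2$-edge, placing the remaining vertices of that shared color class into the $E_{\phi-2}$ part, and sending the other $r-1$ color classes of $H-e$ into distinct partite sets of $T_{(r-1)\phi,r-1}$. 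Minimality of $K_2$ is immediate: any proper subgraph of $K_2$ is edgeless, which would make the right-hand side $r$-colorable and hence unable to contain $H$. Since $\phi\geq r+1\geq 3$, condition (ii) of Theorem \ref{thm1.2} holds for $\mathcal{H}=\{H\}$ with $\nu=\Delta=1$ (noting $M_2=S_2=K_2$).

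Next I would invoke Theorem \ref{thm1.2} and refine its conclusion (iii) to exact equality $G=T_{n,r}$. The proof of the implication (ii)$\Rightarrow$(iii) in Section \ref{2} shows that every $G\in\mathrm{EX}(n,H)$ admits a max-cut partition $V_1,\dots,V_r$ with $\lfloor n/r\rfloor\leq|V_i|\leq\lceil n/r\rceil$, together with $\nu(G[V_i])\leq\nu-1$ and $\Delta(G[V_i])\leq\Delta-1$. Substituting $\nu=\Delta=1$, each $G[V_i]$ must be edgeless, so $G$ is a spanning subgraph of the complete $r$-partite graph $K$ on parts $V_1,\dots,V_r$, and $K\cong T_{n,r}$. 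Since $T_{n,r}$ is itself $H$-free, the inequality $e(G)\leq e(K)=e(T_{n,r})$ combined with the extremality of $G$ forces $G=K=T_{n,r}$, giving uniqueness.

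The only delicate step is the identification $K_2\in\mathcal{M}(\{H\})$ from Definition \ref{def1.1}, but this reduces to matching the coloring picture with the join/disjoint-union template of the definition and checking minimality against the trivial empty alternatives. A self-contained alternative would mimic Simonovits' original argument (apply Lemma \ref{lem2.1}, take a max-cut partition, then use the color-critical edge to embed $H$ whenever an edge lies inside a partite set), but leveraging Theorem \ref{thm1.2} is both shorter and makes transparent why color-criticality collapses the decomposition family to the single graph $K_2$, thereby forcing the rigid conclusion $G=T_{n,r}$.
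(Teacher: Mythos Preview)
Your argument is correct, but it is worth noting that the paper does not give its own proof of Lemma \ref{lem4.1}: the lemma is quoted verbatim from Simonovits \cite{Simonovits-1968} and used as a black box. So there is nothing to compare against on the paper's side except the citation.

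That said, your route---deriving the result from Theorem \ref{thm1.2} by observing that color-criticality forces $K_2=M_2=S_2\in\mathcal{M}(\{H\})$---is a genuinely nice internal derivation, and there is no circularity since the proof of Theorem \ref{thm1.2} in Section \ref{2} does not invoke Lemma \ref{lem4.1}. Two small points are worth making explicit. First, you are not using merely the \emph{statement} of Theorem \ref{thm1.2}(iii), which only gives ``$T_{n,r}$ plus or minus $O(1)$ edges''; you are reaching inside the proof of (ii)$\Rightarrow$(iii) to extract the sharper bounds $\nu(G[V_i])\leq\nu-1$ and $\Delta(G[V_i])\leq\Delta-1$ together with $\lfloor n/r\rfloor\leq|V_i|\leq\lceil n/r\rceil$. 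This is legitimate because those facts are established for any $G\in\mathrm{EX}(n,\mathcal{H})$ once (ii) holds, but you should signal clearly that you are invoking the \emph{proof} rather than the theorem. Second, the verification that $K_2$ is minimal in $\mathcal{M}(\{H\})$ is exactly as you say: any edgeless $M$ makes $(M\cup E_\phi)+T_{(r-1)\phi,r-1}$ an $r$-partite graph, which cannot contain the $(r+1)$-chromatic $H$. With $\nu=\Delta=1$ each $G[V_i]$ is forced to be edgeless, and the balanced part sizes then pin down $G=T_{n,r}$.
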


\begin{lem}\label{lem4.2}
Let $G_0$ be an arbitrary graph
which is obtained from $T_{n,r}$ by adding $k-1$ edges,
say $e_1,e_2,\dots, e_{k-1}$.
Then, $G_0$ is $\mathbb{G}(F_1,\ldots,F_k)$-free.
\end{lem}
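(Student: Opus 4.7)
The plan is to argue by a simple pigeonhole on the $k-1$ added edges, exploiting that each $F_i$ has chromatic number exactly $r+1$ while $T_{n,r}$ is only $r$-chromatic.

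More precisely, I would suppose for contradiction that $G_0$ contains some member of $\mathbb{G}(F_1,\ldots,F_k)$, i.e.\ that there exist $k$ pairwise edge-disjoint subgraphs $F_1',\ldots,F_k'$ of $G_0$ with $F_j'\cong F_j$ for each $j\in[k]$. The first observation is that any copy of $F_j$ inside $G_0$ must use at least one of the added edges $e_1,\ldots,e_{k-1}$: if all edges of $F_j'$ lay in $E(T_{n,r})$, then $F_j'$ would be a subgraph of the $r$-chromatic graph $T_{n,r}$, forcing $\chi(F_j)\leq r$, contradicting $\chi(F_j)=r+1$. (Note that I only need $\chi(F_j)=r+1$ here; color-criticality is not used in this lemma, it is used elsewhere in the paper via Lemma~\ref{lem4.1}.)

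Next I would assign to each copy $F_j'$ an added edge $\varphi(j)\in\{e_1,\ldots,e_{k-1}\}\cap E(F_j')$, which is nonempty by the previous paragraph. Since the copies $F_1',\ldots,F_k'$ are pairwise edge-disjoint, the map $\varphi\colon [k]\to\{e_1,\ldots,e_{k-1}\}$ is injective. This is a map from a $k$-element set to a $(k-1)$-element set, contradicting the pigeonhole principle. Hence no such collection of $k$ edge-disjoint copies exists, and $G_0$ is $\mathbb{G}(F_1,\ldots,F_k)$-free.

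There is no real obstacle; the only subtlety is remembering that the claim is about \emph{any} realization of $G_0$ (i.e.\ regardless of which $k-1$ edges are added and where they are placed within the partite sets or across them), and that we genuinely need nothing about $F_j$ beyond $\chi(F_j)=r+1$. The entire argument is a two-line pigeonhole, so the write-up should be kept short and should emphasize the chromatic obstruction followed by the counting of added edges.
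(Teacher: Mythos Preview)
Your proposal is correct and follows essentially the same approach as the paper: both argue that each edge-disjoint copy $F_j'$ must contain at least one of the $k-1$ added edges (since $\chi(F_j)=r+1$ but $T_{n,r}$ is $r$-chromatic), and then derive a contradiction from having $k$ edge-disjoint copies but only $k-1$ added edges. Your formulation via an injective map $\varphi\colon[k]\to\{e_1,\ldots,e_{k-1}\}$ is just a slightly more explicit phrasing of the same pigeonhole step.
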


\begin{proof}
Suppose to the contrary that $G_0$
contains a subgraph $H$ which consists of $k$ edge-disjoint
graphs $H_1,\dots, H_k$ such that
$H_j\cong F_j$ for each $j\in [k]$.
Since each $H_j$ is $(r+1)$-color,
we can see that each $H_j$ contains
at least one edge in $\{e_1,e_2,\dots,e_{k-1}\}$.
Therefore, $H$ contains at least $k$ edges in
$\{e_1,e_2,\dots,e_{k-1}\}$, a contradiction.
\end{proof}

Now, we are ready to give the proof of Theorem \ref{thm1.4}.

\begin{proof}
Let $G\in \mathrm{EX}\big(n,\mathbb{G}(F_1,\ldots,F_k)\big)$.
We first show that $e(G)<e(T_{n,r})+\sum_{j=1}^{k}e(F_j)$.
Otherwise, $e(G)\geq e(T_{n,r})+\sum_{j=1}^{k}e(F_j)$.
Then $e(G)>e(T_{n,r})$.
By Lemma \ref{lem4.1},
$G$ contains a subgraph
isomorphic to $F_1$.
Now let $t$ be the maximum integer
such that $G$ contains a member of
$\mathbb{G}(F_1,\ldots,F_t)$.
Clearly, $1\leq t\leq k-1$.
Let $G'=G-\bigcup_{j\in[t]}E(F_j)$.
Then, $e(G')\geq e(T_{n,r})+\sum_{j=t+1}^{k}e(F_j)>e(T_{n,r})$.
Again by Lemma \ref{lem4.1}, $G'$
contains a subgraph isomorphic to $F_{t+1}$,
which contradicts the choice of $t$.
Therefore, $e(G)<e(T_{n,r})+\sum_{j=1}^{k}e(F_j)$.

On the other hand,
we have $\chi(H)\geq \chi(F_1)\geq r+1$ for every
$H\in\mathbb{G}(F_1,\ldots,F_k)$.
Hence, $T_{n,r}$ is $\mathbb{G}(F_1,\ldots,F_k)$-free
and $e(G)\geq e(T_{n,r})$.

Combining the two inequalities proved above,
we have $\mathrm{ex}\big(n,\mathbb{G}(F_1,\ldots,F_k)\big)
=e(G)=e(T_{n,r})+O(1)$.
By Theorem \ref{thm1.2},
there exists a constant integer $\alpha>0$ such that
$G$ is obtain from $T_{n,r}$
by adding and deleting at most $\alpha$ edges.

Assume that $V_1,V_2,\ldots,V_r$
are $r$ partite sets of $T_{n,r}$.
Let $G_{in}$ and $G_{cr}$ be two subgraphs of $G$
induced by edges in $E(G)\setminus E(T_{n,r})$
and $E(T_{n,r})\setminus E(G)$, respectively.
Clearly, $|V(G_{in})|\leq 2e(G_{in})\leq 2\alpha$
and $|V(G_{cr})|\leq 2e(G_{cr})\leq 2\alpha$.
For every $i\in [r]$, we have
\begin{align}\label{eq4.1}
\Big|V_i\setminus\big(V(G_{in})\cup V(G_{cr})\big)\Big|\geq
\Big\lfloor\frac{n}{r}\Big\rfloor-4\alpha\geq k\beta,
\end{align}
where $\beta=\max\big\{|V(F_j)|: j\in[k]\}$

Next, we shall prove that $e(G_{in})\leq k-1$.
Suppose to the contrary that $G_{in}$ admits
at least $k$ edges $e_1,\dots,e_k$.
In view of \eqref{eq4.1},
we can select a subset
$V_i'=\{u_{i,1},u_{i,2},\dots,u_{i,k\beta}\}
\subseteq V_i\setminus \big(V(G_{in})\cup V(G_{cr})\big)$
for each $i\in [r]$.
Then $G[\bigcup_{i\in[r]}V_i']$ is a complete $r$-partite graph.
Furthermore, for each $j\in [k]$, set $V_{i,j}'=\{u_{i,(j-1)\beta+1},u_{i,(j-1)\beta+2},\dots,u_{i,j\beta}\}$,
and let $G_j$ be the subgraph of $G$ induced by
$\cup_{i=1}^{r}V_{i,j}'$ and two endpoints of $e_j$.
Then
$G_1,G_2,\ldots,G_k$ are edge-disjoint.
Moreover, we can observe that $G_j$ is obtained from
a complete $r$-partite graph by embedding one edge $e_j$,
more precisely, $G_j\cong (K_2\cup E_{\beta})+T_{(r-1)\beta,r-1}$.
Since $F_j$ is a color-critical graph
with $\chi(F_j)=r+1$ and $|F_j|\leq\beta$,
it is obvious that $G_j$ contains a copy of $F_j$ for each $j\in[k]$.
Then, $G$ contains edge-disjoint $F_1,F_2,\ldots,F_k$,
a contradiction.
Thus, $e(G_{in})\leq k-1$.

Now we have
\begin{align}\label{eq4.2}
e(G)=e(T_{n,r})+e(G_{in})-e(G_{cr})
\leq e(T_{n,r})+(k-1)-e(G_{cr}).
\end{align}
On the other hand,
by Lemma \ref{lem4.2}, if $G_0$ is graph
obtained from $T_{n,r}$ by adding $k-1$ edges,
then $G_0$ is $\mathbb{G}(F_1,\ldots,F_k)$-free.
Since $G\in \mathrm{EX}\big(n,\mathbb{G}(F_1,\ldots,F_k)\big)$,
we have $e(G)\geq e(G_0)=e(T_{n,r})+k-1$.
Combining \eqref{eq4.2},
we have $e(G_{in})=k-1$ and $e(G_{cr})=0$.
Hence, $G$ is obtained from $T_{n,r}$ by adding exactly $k-1$ edges.
Combining Lemma \ref{lem4.2},
we complete the proof of Theorem \ref{thm1.4}.
\end{proof}

\section{Proof of Theorem \ref{thm1.3}}\label{4}

It should be noted again that $r\geq2$ and $n$ is assumed to be sufficiently large.
The following lemma is due to Nikiforov,
which is a spectral version
of Erd\H{o}s-Simonovits stability theorem.

\begin{lem}\label{lem3.1}\emph{(\!\!\cite{Nikiforov-2009})}
Let $r\geq 2$, $\frac{1}{\ln n}<c<r^{-8(r+21)(r+1)}$,
$0<\varepsilon<2^{-36}r^{-24}$, and $G$ be an $n$-vertex graph.
If $\rho(G)>(\frac{r-1}{r}-\varepsilon)n$, then one of the following holds:\\
(i) $G$ contains a complete $(r+1)$-partite graph $K_r(\lfloor c\ln n\rfloor, \dots,\lfloor c\ln n\rfloor,\lceil n^{1-\sqrt{c}}\rceil)$;\\
(ii) $G$ differs from $T_{n,r}$ in fewer than $(\varepsilon^{\frac{1}{4}}+c^{\frac{1}{8r+8}})n^2$ edges.
\end{lem}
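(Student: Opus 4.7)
The plan is to establish the dichotomy by showing that whenever $G$ fails conclusion~(ii) it must satisfy conclusion~(i). The approach combines Nikiforov's spectral Tur\'an-type inequality $\rho(G)^2\leq 2(1-1/r)\,e(G)$ (valid for $K_{r+1}$-free graphs), the Erd\H{o}s--Simonovits stability lemma already recorded as Lemma~\ref{lem2.1}, and a blow-up extraction via dependent random choice.

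First, I would attempt to make $G$ $K_{r+1}$-free by deleting a small edge set $D$ of size at most $\eta n^2$, for a carefully chosen $\eta=\eta(\varepsilon,c)$. If this is possible, then applying the spectral inequality to the $K_{r+1}$-free graph $G-D$, together with the hypothesis $\rho(G)>\bigl(\tfrac{r-1}{r}-\varepsilon\bigr)n$, forces $e(G-D)\geq e(T_{n,r})-O(\varepsilon n^2)$. Lemma~\ref{lem2.1} then places $G-D$ within $\varepsilon^{1/4}n^2$ edges of $T_{n,r}$, and after reabsorbing the deleted edges with $|D|\leq c^{1/(8r+8)}n^2$ we obtain conclusion~(ii). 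Otherwise, no such $D$ exists: the minimum edge cover of the $K_{r+1}$-hypergraph in $G$ exceeds $\eta n^2$, which by a greedy/supersaturation count forces $G$ to carry a robust abundance of $K_{r+1}$-copies. From this abundance I would extract the blow-up $K_{r+1}(\lfloor c\ln n\rfloor,\dots,\lfloor c\ln n\rfloor,\lceil n^{1-\sqrt c}\rceil)$ by dependent random choice: pick an $r$-clique $(v_1,\dots,v_r)$ uniformly at random among the $K_r$-copies in $G$, so that the common neighborhood $W=N(v_1)\cap\cdots\cap N(v_r)$ exceeds $n^{1-\sqrt c}$ in size with positive probability; this $W$ is the large class. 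A K\H{o}v\'ari--S\'os--Tur\'an pigeonhole applied inside $W$ then expands each $v_i$ into a class of logarithmic size $\lfloor c\ln n\rfloor$ while maintaining all cross-class adjacencies.

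The main obstacle I anticipate is the precise parameter calibration required to interlock the two cases. The threshold $\eta$ must be small enough that the edge discrepancy $|D|$ fits inside $c^{1/(8r+8)}n^2$, and simultaneously large enough that failure of the small-cover condition actually produces a supersaturated $K_{r+1}$-count capable of supporting a blow-up with the very specific sizes $n^{1-\sqrt c}$ and $c\ln n$. The fractional exponents $\sqrt c$ and $\tfrac{1}{8r+8}$ in the statement appear tuned precisely so that these two constraints become compatible, and reproducing the sharp form of the second term will likely require careful moment estimates on $\operatorname{tr}(A^s)$ for growing $s$ alongside the clean spectral inequality $\rho^2\leq 2(1-1/r)\,e(G)$.
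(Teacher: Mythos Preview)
The paper does not prove Lemma~\ref{lem3.1}; it is quoted verbatim as a result of Nikiforov \cite{Nikiforov-2009} and simply cited without argument. There is therefore no ``paper's own proof'' to compare your proposal against.

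That said, your sketch has a genuine gap in the first branch. You propose to delete an edge set $D$ with $|D|\le\eta n^2$ making $G-D$ $K_{r+1}$-free, and then invoke the spectral Tur\'an inequality on $G-D$ to force $e(G-D)\ge e(T_{n,r})-O(\varepsilon n^2)$. But that inequality bounds $\rho(G-D)$, not $\rho(G)$, and the hypothesis only gives $\rho(G)>(\tfrac{r-1}{r}-\varepsilon)n$. Passing the lower bound from $\rho(G)$ to $\rho(G-D)$ is not free: with the Perron vector $\mathbf{y}$ of $G$ one gets $\rho(G-D)\ge\rho(G)-2\sum_{uv\in D}y_uy_v$, and without control on $\max_u y_u$ the correction term $2\sum_{uv\in D}y_uy_v$ can be of order $n$ even when $|D|=o(n^2)$. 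Nikiforov's actual argument avoids this edge-deletion detour; it works directly with walk counts $\mathrm{tr}(A^s)$ and a K\H{o}v\'ari--S\'os--Tur\'an/supersaturation mechanism to either locate the blow-up or certify closeness to $T_{n,r}$, which is why the exponents $\sqrt{c}$ and $\tfrac{1}{8r+8}$ emerge with the specific form they do. Your second branch (dependent random choice to extract the unbalanced blow-up) is closer in spirit to the original, but as you yourself note the calibration to hit exactly $\lceil n^{1-\sqrt{c}}\rceil$ and $\lfloor c\ln n\rfloor$ is delicate, and your outline does not yet show how the threshold $\eta$ can be chosen to satisfy both branches simultaneously.
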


Desai et al.
obtained a variation of Lemma \ref{lem3.1}
(see Corollary 2.2, \cite{Desai-2022}),
which presents an effective approach to
study spectral extremal problems on forbidding
a non-bipartite graph $H$.
In this section,
we shall state a version on
forbidding a graph family $\mathcal{H}$.

\begin{lem}\label{lem3.2}
Let $\mathcal{H}$ be a finite graph family with $\phi(\mathcal{H})=r+1$.
For every $\varepsilon>0$, there exist $\delta>0$ and $n_0$ such that
if $G$ is an $\mathcal{H}$-free graph on $n\geq n_0$ vertices with $\rho(G)\geq(\frac{r-1}{r}-\delta)n$,
then $G$ can be obtained from $T_{n,r}$
by adding and deleting at most $\varepsilon n^2$ edges.
\end{lem}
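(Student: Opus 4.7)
The plan is to derive Lemma \ref{lem3.2} directly from Nikiforov's spectral stability dichotomy (Lemma \ref{lem3.1}) by using the finiteness of $\mathcal{H}$ together with the chromatic condition to rule out its first alternative. This mirrors the strategy behind Desai, Kang and Xue's Corollary 2.2, cited in the paragraph just above the lemma; the only novelty is that we need to absorb a fixed member of the family $\mathcal{H}$ whose chromatic number equals $r+1$, rather than a single forbidden graph.

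Given $\varepsilon > 0$, I would first choose auxiliary constants $c$ and $\varepsilon'$ satisfying $c < r^{-8(r+21)(r+1)}$, $\varepsilon' < 2^{-36}r^{-24}$, and $\varepsilon'^{1/4} + c^{1/(8r+8)} \leq \varepsilon$, and set $\delta = \varepsilon'$. For $n$ large enough that $\frac{1}{\ln n} < c$, applying Lemma \ref{lem3.1} to $G$ yields one of two alternatives: either (i) $G$ contains a complete $(r+1)$-partite subgraph with $r$ parts of size $\lfloor c\ln n\rfloor$ and one part of size $\lceil n^{1-\sqrt{c}}\rceil$, or (ii) $G$ differs from $T_{n,r}$ in fewer than $(\varepsilon'^{1/4} + c^{1/(8r+8)})n^2 \leq \varepsilon n^2$ edges. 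Alternative (ii) is exactly the desired conclusion, so the task reduces to ruling out (i).

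To rule out (i), I would pick some $H_0 \in \mathcal{H}$ with $\chi(H_0) = r+1$ (the minimum chromatic number in the family), and let $\phi = \phi(\mathcal{H})$, which is a fixed constant by the finiteness hypothesis. Fix a proper $(r+1)$-coloring of $H_0$, whose $r+1$ color classes each have at most $\phi$ vertices. For $n$ sufficiently large we have $\lfloor c \ln n \rfloor \geq \phi$, so each color class of $H_0$ injects into one of the $r+1$ parts of the multipartite subgraph guaranteed by (i); since that subgraph is complete $(r+1)$-partite, this embedding is a graph homomorphism, hence a copy of $H_0$ in $G$. This contradicts the $\mathcal{H}$-freeness of $G$, so case (i) cannot occur and case (ii) delivers the lemma.

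The argument is essentially bookkeeping once Lemma \ref{lem3.1} is in hand; the only care required is selecting $c$, $\varepsilon'$, and $\delta$ simultaneously to meet the hypotheses of Lemma \ref{lem3.1} and to make the error in (ii) at most $\varepsilon n^2$, and then ensuring $n_0$ is large enough that $\lfloor c\ln n\rfloor \geq \phi$ for all $n \geq n_0$. There is no substantial combinatorial obstacle beyond this parameter chase.
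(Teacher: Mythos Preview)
Your proposal is correct and is exactly the argument the paper has in mind: the paper does not spell out a proof of Lemma~\ref{lem3.2} but merely remarks that it is the graph-family version of Corollary~2.2 in \cite{Desai-2022}, itself an immediate consequence of Nikiforov's dichotomy (Lemma~\ref{lem3.1}); your parameter chase and the elimination of alternative~(i) via an $(r+1)$-colouring of some $H_0\in\mathcal{H}$ with $\chi(H_0)=r+1$ is precisely that deduction. One cosmetic point: the cited corollary is due to Desai, Kang, Li, Ni, Tait and Wang (abbreviated ``Desai et al.'' in the paper), not ``Desai, Kang and Xue''.
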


The following result was given by
Wang, Kang and Xue (see Claim 1, \cite{WANG-2023}).

\begin{lem} \label{lem3.3}\emph{(\!\cite{WANG-2023})}
Let $K$ be a
complete $r$-partite graph of order $n$ with partite sets of size
$n_1\geq\cdots\geq n_r$.
If $n_1-n_r\geq 2$, then $\rho(T_{n,r})-\rho(K)
\geq \frac{\gamma}{n}$
for some constant $\gamma>0$.
\end{lem}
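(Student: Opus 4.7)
The plan is to exhibit the spectral gap by a single ``balancing move,'' exploiting the fact that any complete $r$-partite graph has an equitable partition into its parts. Let $K$ have parts $V_1,\ldots,V_r$ with $|V_i|=n_i$. Since the indicator vectors of the parts span an invariant subspace of $A(K)$, the Perron eigenvalue $\rho(K)$ coincides with the Perron eigenvalue of the $r\times r$ quotient matrix $B$ with $B_{ii}=0$ and $B_{ij}=n_j$ for $i\neq j$. Writing the eigenvector equation componentwise and setting $S=\sum_j n_jy_j$ gives $y_i=S/(\rho(K)+n_i)$, so plugging back yields the characteristic relation
\[
\sum_{i=1}^{r}\frac{n_i}{\rho(K)+n_i}=1.
\]
Denote by $g_K(\lambda):=\sum_i n_i/(\lambda+n_i)$ the associated function; it is continuous, strictly decreasing on $\lambda>0$, and $\rho(K)$ is its unique positive root of the equation $g_K=1$.

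Next, I would introduce the \emph{balancing move} sending $K$ to the complete $r$-partite graph $K'$ with part sizes $(n_1-1,n_2,\ldots,n_{r-1},n_r+1)$; the hypothesis $n_1-n_r\geq 2$ ensures $n_1-1\geq n_r+1$, so these sizes are admissible. Using $n/(\lambda+n)=1-\lambda/(\lambda+n)$ and a short algebraic manipulation,
\[
g_{K'}(\lambda)-g_K(\lambda)=\lambda\left[\frac{1}{(\lambda+n_r)(\lambda+n_r+1)}-\frac{1}{(\lambda+n_1)(\lambda+n_1-1)}\right],
\]
which is strictly positive because $n_1-1\geq n_r+1$ implies $(\lambda+n_1)(\lambda+n_1-1)\geq(\lambda+n_r+2)(\lambda+n_r+1)$. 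Evaluating at $\lambda=\rho(K)$ gives $g_{K'}(\rho(K))>1=g_{K'}(\rho(K'))$, and monotonicity of $g_{K'}$ yields $\rho(K')>\rho(K)$.

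To obtain the quantitative $\gamma/n$ bound, I would split into two regimes. If $K$ is far from balanced in the sense that $\rho(K)\leq\rho(T_{n,r})-1$, then the conclusion is trivial for $n$ large. Otherwise $\rho(K)=\Theta(n)$ and $n_i=\Theta(n)$ for every $i$; the display above then gives $g_{K'}(\rho(K))-1\geq c_1/n^2$ for an absolute constant $c_1>0$. Applying the mean value theorem to $g_{K'}$ between $\rho(K)$ and $\rho(K')$,
\[
g_{K'}(\rho(K))-1=|g_{K'}'(\xi)|\bigl(\rho(K')-\rho(K)\bigr),
\]
and since $|g_{K'}'(\lambda)|=\sum_i n_i/(\lambda+n_i)^2=O(1/n)$ in this regime, we conclude $\rho(K')-\rho(K)\geq c_2/n$. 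Iterating balancing moves converges to $T_{n,r}$ in finitely many steps, and since each move strictly increases the spectral radius, $\rho(T_{n,r})-\rho(K)\geq c_2/n$, as desired.

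The main obstacle I expect is keeping the constant $\gamma$ uniform across all configurations, particularly very unbalanced $K$ where $\rho(K)$ may be only $\Theta(\sqrt{n})$ and the leading-order asymptotics of $g_{K'}'$ differ. The cleanest remedy is the trivial split above: when $K$ is very unbalanced, $\rho(T_{n,r})-\rho(K)=\Omega(1)$ outright, so only the near-Tur\'an regime needs the delicate estimate, and there the part sizes and $\rho(K)$ are all of the same order $n/r$, which makes the denominators in the key estimate tractable.
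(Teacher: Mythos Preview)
The paper does not supply its own proof of this lemma; it is quoted verbatim from Wang, Kang and Xue \cite{WANG-2023} (their Claim~1), so there is nothing in the present paper to compare against. Your argument via the equitable-partition identity $\sum_i n_i/(\rho+n_i)=1$ together with a single balancing move is sound and self-contained, and it yields a constant $\gamma=\gamma(r)$ as required.

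One small point: in the nontrivial regime you assert ``$n_i=\Theta(n)$ for every $i$'' without justification. This is in fact true (if some $n_i=o(n)$ one checks from the characteristic identity that $\rho(K)\le \tfrac{r-2}{r-1}n+o(n)$, which is already $\Theta(n)$ below $\rho(T_{n,r})$), but more to the point you do not need it. Your lower bound on $g_{K'}(\rho(K))-1$ and your upper bound on $|g_{K'}'(\xi)|$ both follow from $\rho(K)=\Theta(n)$ alone: every factor $\rho(K)+n_i$ appearing in the denominators lies between $\rho(K)$ and $\rho(K)+n$, hence is $\Theta(n)$ uniformly, and $\sum_i n_i'/(\xi+n_i')^2\le n/\xi^2=O(1/n)$. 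With that clarification the argument is complete.
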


Recall that $\chi(\mathcal{H})=r+1\geq 3$ and ${\rm ex}(n,\mathcal{H})<e(T_{n,r})+\lfloor \frac{n}{2r} \rfloor$.
Let $G^*\in {\rm EX}(n,\mathcal{H})$.
By Theorem \ref{thm1.2}, $G^*$ can be obtained from $T_{n,r}$
by first adding $\alpha_1$ edges, and then deleting $\alpha_2$ edges
in $E(T_{n,r})$,
where $\alpha_1,\alpha_2$ are constant integers.

\begin{lem} \label{lem3.4}
We have $\alpha_1\geq \alpha_2$ and ${\rm ex}(n,\mathcal{H})=e(T_{n,r})+\alpha_1-\alpha_2$.
Moreover,
\begin{align*}
\rho(G^*)-\rho(T_{n,r})\geq \frac{2(\alpha_1-\alpha_2)}{n}
-\frac{6\alpha_1}{n^2}.
\end{align*}
\end{lem}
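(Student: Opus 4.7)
The plan is to dispatch the two assertions separately. For the first, because $\chi(\mathcal{H})=r+1$, the graph $T_{n,r}$ is itself $\mathcal{H}$-free, so by the extremality of $G^*$ we have $e(G^*)\ge e(T_{n,r})$. Combined with the identity $e(G^*)=e(T_{n,r})+\alpha_1-\alpha_2$ (the very definition of $\alpha_1,\alpha_2$), this immediately gives both $\alpha_1\ge\alpha_2$ and ${\rm ex}(n,\mathcal{H})=e(G^*)=e(T_{n,r})+\alpha_1-\alpha_2$.

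For the spectral inequality, let $E_1$ be the set of $\alpha_1$ added edges (each lying inside a single partite set of $T_{n,r}$) and $E_2$ the set of $\alpha_2$ deleted edges (each going between two partite sets). Let $\mathbf{z}$ be the unit Perron vector of $T_{n,r}$; plugging $\mathbf{z}$ into the Rayleigh quotient for $A(G^*)$ gives
\[
\rho(G^*)\ \ge\ \mathbf{z}^T A(G^*) \mathbf{z}\ =\ \rho(T_{n,r})\ +\ 2\sum_{uv\in E_1} z_u z_v\ -\ 2\sum_{uv\in E_2} z_u z_v,
\]
so it suffices to show that the difference of sums is at least $(\alpha_1-\alpha_2)/n-3\alpha_1/n^2$.

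From the proof of Theorem~\ref{thm1.2} we already know that the partite sets of $T_{n,r}$ have sizes $a:=\lfloor n/r\rfloor$ or $a+1=\lceil n/r\rceil$. Let $t=n-ra$, so there are $t$ ``large'' parts of size $a+1$ and $r-t$ ``small'' parts of size $a$. If $t=0$ then $T_{n,r}$ is regular, $\mathbf{z}\equiv 1/\sqrt n$, and the bound is trivial, so assume $1\le t\le r-1$. Then $\mathbf{z}$ takes two values, $y_S$ on small parts and $y_L$ on large parts, with $y_S\ge y_L$ since small-part vertices have higher degree. Subtracting the two Perron equations yields $y_S=y_L\bigl(1+\xi\bigr)$ with $\xi=1/(\rho(T_{n,r})+a)$, and the normalization $(r-t)a\,y_S^2+t(a+1)\,y_L^2=1$ combined with the obvious $(r-t)a/n+t(a+1)/n=1$ gives the key relation $(r-t)a(y_S^2-1/n)=t(a+1)(1/n-y_L^2)$. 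Setting $\Delta_L=1/n-y_L^2\ge 0$ and $\Delta_S=y_S^2-1/n\ge 0$, one also solves the normalization to get $y_L^2=1/(n+\psi)$ with $\psi=(r-t)a\xi(2+\xi)$, and combining these identities produces
\[
n^2(\Delta_L+\Delta_S)\ =\ \frac{n^2\xi(2+\xi)}{n+\psi}.
\]
Since $\rho(T_{n,r})\ge n-\lceil n/r\rceil$ and $a\ge n/r-1$ give $\rho(T_{n,r})+a\ge n-2$, hence $\xi\le 1/(n-2)$, a short calculation bounds the right-hand side by $3$ for $n$ large enough, i.e.\ $\Delta_L+\Delta_S\le 3/n^2$.

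To conclude, each edge of $E_1$ satisfies $z_u z_v\in\{y_L^2,y_S^2\}\ge y_L^2$, while each edge of $E_2$ satisfies $z_u z_v\le y_S^2$, so
\[
\sum_{uv\in E_1} z_u z_v-\sum_{uv\in E_2} z_u z_v\ \ge\ \alpha_1 y_L^2-\alpha_2 y_S^2\ =\ \frac{\alpha_1-\alpha_2}{n}-(\alpha_1\Delta_L+\alpha_2\Delta_S)\ \ge\ \frac{\alpha_1-\alpha_2}{n}-\frac{3\alpha_1}{n^2},
\]
where the last step uses $\alpha_2\le\alpha_1$ to get $\alpha_1\Delta_L+\alpha_2\Delta_S\le\alpha_1(\Delta_L+\Delta_S)\le 3\alpha_1/n^2$. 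Doubling delivers the claimed inequality. The main technical obstacle will be the clean derivation of the closed-form identity for $n^2(\Delta_L+\Delta_S)$ together with the lower bound $\rho(T_{n,r})+a\ge n-2$; once those are in hand, the rest is routine bookkeeping.
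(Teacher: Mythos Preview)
Your proposal is correct and follows essentially the same strategy as the paper: plug the unit Perron vector of $T_{n,r}$ into the Rayleigh quotient for $A(G^*)$, then bound the two eigenvector values on small versus large parts. The paper obtains $y_L\ge(1-1/n)y_S$ and $1/n\le y_S^2\le 1/n+3/n^2$, then bounds $2\alpha_1 y_L^2-2\alpha_2 y_S^2\ge(2\alpha_0-4\alpha_1/n)y_S^2$ and finishes with a short case split on the sign of the coefficient; you instead introduce $\Delta_L=1/n-y_L^2$ and $\Delta_S=y_S^2-1/n$, derive the closed form $n^2(\Delta_L+\Delta_S)=n^2\xi(2+\xi)/(n+\psi)\le 3$, and use $\alpha_2\le\alpha_1$ to avoid the case split---but these are cosmetic differences in bookkeeping, not a genuinely different route.
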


\begin{proof}
Since
$G^*$ is obtained from $T_{n,r}$
by adding $\alpha_1$ edges and deleting $\alpha_2$ edges,
we have ${\rm ex}(n,\mathcal{H})=e(G^*)=e(T_{n,r})+\alpha_1-\alpha_2$.
Recall that $T_{n,r}$ is $\mathcal{H}$-free.
Then ${\rm ex}(n,\mathcal{H})\geq e(T_{n,r})$,
and hence $\alpha_1\geq \alpha_2$.

Since $T_{n,r}$ is connected, by the Perron-Frobenius theorem, there exists a positive unit eigenvector
$\mathbf{y}$ corresponding to $\rho(T_{n,r})$, let $y_i$ denote its $i$'th entry.
Let $V_1$ (resp. $V_2$) be an arbitrary partite set of $T_{n,r}$
with size $\lfloor\frac nr\rfloor$ (resp. $\lceil\frac nr\rceil$).
By symmetry, we may assume that
$y_v\equiv y_i$ for $v\in V_i$ and $i\in \{1,2\}$.
Then,
$\rho(T_{n,r})y_i=\sum_{v\in V(T_{n,r})}y_v-|V_i|y_i,$
which yields that $(\rho(T_{n,r})+|V_i|)y_i=\sum_{v\in V(T_{n,r})}y_v$.
Moreover, $\rho(T_{n,r})\geq \delta(T_{n,r})=n-|V_2|$.
It follows that
\begin{align*}
1\geq \frac{y_2}{y_1}=\frac{\rho(T_{n,r})+|V_1|}{\rho(T_{n,r})+|V_2|}
\geq 1-\frac{1}{\rho(T_{n,r})+|V_2|}\geq 1-\frac{1}{n}.
\end{align*}
Hence,
$\big(1-\frac{1}{n}\big)y_1\leq y_2\leq y_1$,
and furthermore,
\begin{align*}
(n-2)y_1^2<
n\left(1-\frac{1}{n}\right)^2y_1^2\leq ny_2^2\leq\sum_{v\in V(T_{n,r})}y_v^2\leq ny_1^2.
\end{align*}
Notice that $\sum_{v\in V(T_{n,r})}y_v^2=1$.
Thus,
$\frac{1}{n}\leq y_1^2<\frac1{n-2}<\frac{1}{n}+\frac{3}{n^2}$.

Now, let $G^*_{in}$ and $G^*_{cr}$ be
two subgraphs of $G^*$ induced by edges
in $E(G^*)\setminus E(T_{n,r})$
and $E(T_{n,r})\setminus E(G^*)$, respectively.
By definition, $e(G^*_{in})=\alpha_1$ and $e(G^*_{cr})=\alpha_2$.
Thus,
\begin{align}\label{eq3.1}
\rho(G^*)-\rho(T_{n,r}) &\geq
\mathbf{y}^{T}\Big(A(G^*)-A(T_{n,r})\Big)\mathbf{y}
\geq\!\!\!\sum_{uv\in E(G^*_{in})}\!\!\!2y_uy_v
-\!\!\!\sum_{uv\in E(G^*_{cr})}\!\!\!2y_uy_v \nonumber\\
&\geq 2\alpha_1y_2^2-2\alpha_2y_1^2
\geq 2\alpha_1\Big(1-\frac{1}{n}\Big)^2y_{1}^2-2\alpha_2y_{1}^2 \nonumber\\
&\geq \Big(2\alpha_1-2\alpha_2-\frac{4\alpha_1}{n}\Big)y_1^2.
\end{align}
Recall that $\frac{1}{n}\leq y_1^2<\frac{1}{n}+\frac{3}{n^2}$.
Set $\alpha_0=\alpha_1-\alpha_2$.
If $\alpha_0\geq1$,
then $2\alpha_0-\frac{4\alpha_1}{n}>0$,
and by \eqref{eq3.1} we have
$\rho(G^*)-\rho(T_{n,r})
\geq \Big(2\alpha_0-\frac{4\alpha_1}{n}\Big)\frac{1}{n}$.
If $\alpha_0\leq 0$, then
$2\alpha_0-\frac{4\alpha_1}{n}\leq 0$.
Consequently,
\begin{align*}
\rho(G^*)-\rho(T_{n,r})
\geq \Big(2\alpha_0-
\frac{4\alpha_1}{n}\Big)\Big(\frac{1}{n}
+\frac{3}{n^2}\Big)
\geq\frac{2\alpha_0}{n}-\frac{6\alpha_2}{n^2}.
\end{align*}
Since $\max\{4\alpha_1,6\alpha_2\}\leq 6\alpha_1$,
we can obtain the desired result in both cases.
\end{proof}

Inspired by the stability theorems
obtained by Nikiforov \cite{Nikiforov-2009}
and Desai et al. \cite{Desai-2022},
and the idea of Wang, Kang, Xue \cite{WANG-2023},
we now give the proof of Theorem \ref{thm1.3}.

\begin{proof}
Let $G\in {\rm SPEX}(n,\mathcal{H})$.
By the Perron-Frobenius theorem,
there exists a non-negative unit eigenvector
$\mathbf{x}=(x_1,\ldots,x_n)^{T}$ corresponding to $\rho(G)$,
where $x_{u^*}=\max\{x_i: i\in V(G)\}$.
Let $\varphi$ and $\varepsilon$ be defined as in \eqref{eq2.1}.
Since $\chi(\mathcal{H})=r+1\geq 3$ and ${\rm ex}(n,\mathcal{H})<e(T_{n,r})+\lfloor \frac{n}{2r} \rfloor$,
by Theorem \ref{thm1.2} there exist two integers
$\nu\leq\lfloor\frac{\phi}{2}\rfloor$
and $\Delta\leq \phi$ such that $M_{2\nu},
S_{\Delta+1}\in\mathcal{M}(\mathcal{H})$.
Recall that $T_{n,r}$ is $\mathcal{H}$-free
and $e(T_{n,r})\geq\frac{r-1}{2r}n^2-\frac{r}{8}$.
Hence,
\begin{align}\label{eq3.2}
\rho(G)\geq \rho(T_{n,r})\geq \frac{\mathbf{1}^{T}A(T_{n,r})\mathbf{1}}{\mathbf{1}^{T}\mathbf{1}}
  =\frac{2e(T_{n,r})}{n}\geq\frac{r-1}{r}n-\frac{r}{4n}.
\end{align}
Then by Lemma \ref{lem3.2}, $G$ can be obtained from $T_{n,r}$
by adding and deleting at most $\varepsilon n^2$ edges.
By Definition \ref{def2.1},
$G$ is $\mathcal{H}$-good,
and thus Claims \ref{cl2.1}-\ref{cl2.6}
also hold for $G$.

Let $V_1,\dots,V_r$ be a partition of $V(G)$
which is defined in Claim \ref{cl2.1},
and let $K$ denote the complete $r$-partite graph
with partite sets $V_1,\dots,V_r$.
Furthermore, let $G_{in}$ and $G_{cr}$
be two subgraphs of $G$ induced by edges
in $E(G)\setminus E(K)$
and $E(K)\setminus E(G)$, respectively.
Then we have
$E(G)=E(K)+E(G_{in})-E(G_{cr})$.

\begin{claim}\label{cl3.1}
$\mathbf{x}>0$ and $R=L=\varnothing$, where $L$ and $R$ are
defined in Claims \ref{cl2.2} and \ref{cl2.3}.
\end{claim}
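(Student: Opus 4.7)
The plan is to prove $L=\varnothing$ first, by upgrading the vertex-switching argument from the last part of the proof of Theorem~\ref{thm1.2} to a spectral comparison using the Perron vector $\mathbf{x}$; then $R=\varnothing$ follows immediately from Claim~\ref{cl2.5}, and $\mathbf{x}>0$ follows from the ensuing connectivity together with Perron--Frobenius.

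For $L=\varnothing$, suppose for contradiction that some $v_0\in L$. Let $G'$ be obtained from $G$ by deleting all edges incident to $v_0$ and joining $v_0$ to every vertex of $\bigcup_{i\in[r-1]}\overline{I}_i$. The $\mathcal{H}$-freeness of $G'$ follows verbatim from the argument at the end of the proof of Theorem~\ref{thm1.2}: any $H\in\mathcal{H}$ inside $G'$ must use $v_0$ with $N_H(v_0)\subseteq\bigcup_{i<r}\overline{V}_i$ and $|N_H(v_0)|\leq\varphi-2$, and Claim~\ref{cl2.4} then supplies a vertex $u\in\overline{V}_r$ adjacent in $G$ to all of $N_H(v_0)$, yielding a copy of $H$ already in $G$. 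For the spectral gain I would estimate
\[
\rho(G')-\rho(G)\;\geq\;\mathbf{x}^{T}\bigl(A(G')-A(G)\bigr)\mathbf{x}\;=\;2x_{v_0}\Bigl(\sum_{v\in\bigcup_{i<r}\overline{I}_i}x_v-\sum_{u\in N_G(v_0)}x_u\Bigr).
\]
The counting bound $\sum_{i<r}|\overline{I}_i|\geq(1-\tfrac{1}{r}-3\varepsilon^{1/3})n$ from \eqref{eq2.7}, against $|N_G(v_0)|\leq(\tfrac{r-1}{r}-5\varepsilon^{1/3})n$, exhibits a vertex excess of at least $2\varepsilon^{1/3}n$ on the positive side; combined with a uniform lower bound $x_v\geq(1-o(1))x_{u^*}$ on vertices of near-maximum degree (read off from the eigen-equation $\rho(G)x_v=\sum_{u\sim v}x_u$ together with $\rho(G)\geq\rho(T_{n,r})=(\tfrac{r-1}{r}-o(1))n$ and $d_G(v)\geq(\tfrac{r-1}{r}-o(1))n$ for $v\in\bigcup_{i<r}\overline{I}_i$), this counting excess becomes a strictly positive weighted gap, whence $\rho(G')>\rho(G)$, contradicting $G\in\mathrm{SPEX}(n,\mathcal{H})$. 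Thus $L=\varnothing$, and Claim~\ref{cl2.5} gives $R\subseteq L=\varnothing$.

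Once $L=R=\varnothing$, every vertex has degree exceeding $(\tfrac{r-1}{r}-5\varepsilon^{1/3})n$, and an iterated application of Claim~\ref{cl2.4} shows that any two vertices share common neighbors through the dense complete-$r$-partite backbone on $\overline{V}=V(G)$; hence $G$ is connected, and Perron--Frobenius gives $\mathbf{x}>0$.

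The main obstacle is the weighted comparison in the $L=\varnothing$ step: the counting gap between $\sum_{i<r}|\overline{I}_i|$ and $|N_G(v_0)|$ is only of order $\varepsilon^{1/3}n$, so I need a uniform comparability estimate placing $x_v$ close to $x_{u^*}$ across all high-degree vertices in order to convert the vertex-count gap into a genuine spectral gap. A secondary subtlety is that the Rayleigh-quotient argument at $v_0$ requires $x_{v_0}>0$ in the first place; this can be secured by a preliminary switching step applied to any component of $G$ on which $\mathbf{x}$ vanishes, using that such a component has at most $o(n)$ vertices (its spectral radius is at most its order minus one, far below $\rho(G)\geq(\tfrac{r-1}{r}-o(1))n$) and can therefore be absorbed into the main component by inserting a single cross-partite edge whose $\mathcal{H}$-freeness is verified once more by Claim~\ref{cl2.4}.
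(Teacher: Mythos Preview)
Your approach has a genuine gap at the ``uniform lower bound $x_v\geq(1-o(1))x_{u^*}$'' step. You claim this can be ``read off from the eigen-equation'' together with the degree estimate $d_G(v)\geq(\tfrac{r-1}{r}-o(1))n$, but the eigen-equation $\rho(G)x_v=\sum_{u\sim v}x_u$ only yields the \emph{upper} bound $x_v\leq\frac{d_G(v)}{\rho(G)}x_{u^*}$; a lower bound on $x_v$ would require lower bounds on the entries of its neighbours, and you have none at this stage. The paper does eventually prove such a uniform bound (Claim~\ref{cl3.3}), but only \emph{after} establishing $L=R=\varnothing$, so invoking it here is circular. Since the vertex excess you have between $|I|$ and $|N_G(v_0)|$ is only of order $\varepsilon^{1/3}n$, the weighted comparison $\sum_{v\in I}x_v>\sum_{u\in N_G(v_0)}x_u$ cannot be concluded without such control.

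The paper avoids individual entry estimates entirely by applying the eigen-equation at the \emph{maximum-entry} vertex $u^*$. Every neighbour of $u^*$ lies in $L\cup\bigcup_i\overline{V}_i$, and the contribution from $L$, from $\overline{V}_r$ (where $u^*$ lives), and from $\bigcup_{i<r}(\overline{V}_i\setminus\overline{I}_i)$ is at most $\tfrac{7}{2}\varepsilon^{1/3}n\,x_{u^*}$ by counting, each term being bounded by $x_{u^*}$. This gives directly
\[
\sum_{v\in I}x_v\;\geq\;\bigl(\rho(G)-\tfrac{7}{2}\varepsilon^{1/3}n\bigr)x_{u^*}\;>\;\bigl(\tfrac{r-1}{r}-4\varepsilon^{1/3}\bigr)n\,x_{u^*},
\]
with $I=\bigcup_{i<r}\overline{I}_i$. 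This bound on the \emph{sum}, combined with the trivial upper bound $\sum_{u\in N_G(u_0)}x_u\leq d_G(u_0)x_{u^*}$, does all the work your counting-plus-uniform-comparability argument was meant to do. The paper also handles the possibility $x_{u_0}=0$ more cleanly than your component-switching workaround: if $x_{u_0}=0$ then the Rayleigh inequality forces $\rho(G')=\rho(G)$, so $\mathbf{x}$ is itself an eigenvector of $G'$ for $\rho(G')$, whence $\rho(G')x_{u_0}=\sum_{v\in I\setminus\{u_0\}}x_v>0$, a contradiction. Positivity of $\mathbf{x}$ and $L=\varnothing$ are thus established simultaneously for each vertex, and $R=\varnothing$ then follows from Claim~\ref{cl2.5}.
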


\begin{proof}
Assume without loss of generality that $u^*\in V_r$.
Since $x_{u^*}=\max\{x_i: i\in V(G)\}$,
we have $\rho(G)x_{u^*}=\sum_{v\in N_{G}(u^*)}x_v
\leq d_G(u^*)x_{u^*}$,
that is, $d_G(u^*)\geq \rho(G)$.
Combining \eqref{eq3.2},
we get
$d_G(u^*)\geq\frac{r-1}{r}n-\frac{r}{4n}
>\Big(\frac{r-1}{r}-5\varepsilon^{\frac13}\Big)n.$
Thus $u^*\notin L$,
and by Claim \ref{cl2.5}, $R\subseteq L$.
Hence, $u^*\in \overline{V}_r$,
where $\overline{V}_r=V_r\setminus(R\cup L)$.
By the definition of $R$, we know that
$d_{\overline{V}_r}(u^*)\leq d_{V_r}(u^*)<2\varepsilon^{\frac13}n$.
By Claim \ref{cl2.2}, we have $|L|\leq\varepsilon^{\frac13}n$,
and by Claim \ref{cl2.6},
every $G[\overline{V}_i]$ contains an independent set
$\overline{I}_i$ with
$\big|\overline{V}_i\setminus\overline{I}_i\big|\leq2(\nu-1)$.
It is clear that
\begin{align}\label{eq3.3}
\rho(G)x_{u^*}&=\!\!\sum_{v\in N_{L}(u^*)}\!\!x_v+
\!\!\sum_{i\in [r]}\sum_{v\in N_{\overline{V}_i}(u^*)}\!\!\!x_v
\leq\Big(|L|+d_{\overline{V}_r}(u^*)\Big)x_{u^*}
+\!\!\!\sum_{i\in [r-1]}\sum_{v\in\overline{V}_{i}
\setminus\overline{I}_{i}}\!\!\!x_{u^*}
+\!\!\!\sum_{i\in [r-1]}\sum_{v\in \overline{I}_i}\!x_v\nonumber\\
&\leq \frac72\varepsilon^{\frac13} nx_{u^*}
+\sum_{i\in[r-1]}\sum_{v\in \overline{I}_i}x_v.
\end{align}

Notice that $R\subseteq L$.
Now we show that $\mathbf{x}>0$ and $L=\varnothing$.
Choose an arbitrary vertex $u_0\in V(G)$.
It suffices to prove that
$x_{u_0}>0$ and $u_0\notin L$.

Denote $I=\bigcup_{i\in [r-1]}\overline{I}_i.$
Combining \eqref{eq3.2} and \eqref{eq3.3}, we can see that
\begin{align}\label{eq3.4}
\sum_{v\in I\setminus\{u_0\}}x_v
\geq\Big(\rho(G)-\frac72\varepsilon^{\frac13}n\Big)x_{u^*}- x_{u^*}
>\Big(\frac{r-1}{r}-4\varepsilon^{\frac13}\Big)nx_{u^*}.
\end{align}
Let $G'$ be obtained from $G$
by deleting all edges incident to $u_0$
and joining $u_0$ with all vertices in
$I\setminus \{u_0\}$.
It is easy to see that $G'$ is $\mathcal{H}$-free.
Since $G\in{\rm SPEX}(n,\mathcal{H})$, we have
\begin{align}\label{eq3.5}
0\geq \rho(G')-\rho(G)\ge \mathbf{x}^T\Big(A(G')-A(G)\Big)\mathbf{x}
=2x_{u_0}\Big(
\sum_{v\in I\setminus \{u_0\}}x_v-\!\!\sum_{v\in N_G(u_0)}\!\!x_v\Big).
\end{align}
Then $x_{u_0}>0$, otherwise,
by \eqref{eq3.5} we have $\rho(G')=\rho(G)$,
and $\mathbf{x}$ is also an eigenvector corresponding to $\rho(G')$.
Thus, $\rho(G')x_{u_0}=\sum_{v\in I\setminus\{u_0\}}x_v>0$
by \eqref{eq3.4}, a contradiction.
Furthermore,
since $x_{u_0}>0$, by \eqref{eq3.4} and \eqref{eq3.5} we have
$\sum_{v\in N_G(u_0)}x_v
\geq\sum_{v\in I\setminus\{u_0\}}x_v
>\Big(\frac{r-1}{r}-4\varepsilon^{\frac13}\Big)nx_{u^*}.$
Consequently, $d_G(u_0)>\big(\frac{r-1}{r}-4\varepsilon^{\frac13}\big)n$.
By the definition of $L$, we have $u_0\notin L$.
\end{proof}

By Claim \ref{cl3.1},
$\overline{V}_i=V_i\setminus (R\cup L)=V_i$
for $i\in [r]$.
This, together with Claims  \ref{cl2.5} and \ref{cl2.6},
gives that $\Delta(G[V_i])\leq \Delta-1$ and $\nu(G[V_i])\leq \nu-1$.
By Lemma \ref{lem2.2}, we have
$e(G[V_i])\leq f(\nu-1,\Delta-1)\leq (\nu-1)\Delta.$
Hence,
\begin{align}\label{eq3.6}
|G_{in}|\leq 2e(G_{in})=\sum_{i\in[r]}2e(G[V_i])\leq 2r(\nu-1)\Delta.
\end{align}
Now, let $\widehat{V}_i$
be the set of isolated vertices in $G[V_i]$.
Clearly, $N_G(u)\subseteq\bigcup_{k\in[r]\setminus\{i\}}V_k$
for each $i\in[r]$
and each vertex $u\in \widehat{V}_i$.

\begin{claim}\label{cl3.2}
$N_G(u)=\bigcup_{k\in[r]\setminus\{i\}}V_k$ for $i\in[r]$
and $u\in \widehat{V}_i$.
Moreover, $V(G_{cr})\subseteq V(G_{in})$.
\end{claim}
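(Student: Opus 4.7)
The two parts of the claim are logically equivalent: if $v\in V(G_{cr})$ with $v\in V_i$, then $v$ has a non-neighbor in $\bigcup_{k\neq i}V_k$, so the contrapositive of the first part forces $v\notin\widehat V_i$, that is, $v\in V(G_{in})$. I will therefore concentrate on proving the first assertion.

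Suppose, for contradiction, that some $u\in\widehat V_i$ has $W:=\bigcup_{k\neq i}V_k\setminus N_G(u)\neq\varnothing$. Let $G^u$ be obtained from $G$ by joining $u$ to every vertex of $W$. Since $u$ is isolated in $G[V_i]$, this operation only adds cross edges at $u$ and leaves each $G[V_j]$ unchanged. Using the positivity $\mathbf{x}>0$ from Claim \ref{cl3.1}, the Rayleigh quotient gives
\[
\rho(G^u)-\rho(G)\;\ge\;\mathbf{x}^{T}\bigl(A(G^u)-A(G)\bigr)\mathbf{x}\;=\;2x_u\!\sum_{v\in W}x_v\;>\;0,
\]
so $\rho(G^u)>\rho(G)={\rm spex}(n,\mathcal{H})$. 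It therefore suffices to show that $G^u$ is still $\mathcal{H}$-free, as this will contradict $G\in{\rm SPEX}(n,\mathcal{H})$.

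Suppose some $H\in\mathcal{H}$ embeds into $G^u$. Because $G$ is $\mathcal{H}$-free and the edges of $E(G^u)\setminus E(G)$ are all incident to $u$, we must have $u\in V(H)$ and $N_H(u)\subseteq N_{G^u}(u)=\bigcup_{k\neq i}V_k$, with $|N_H(u)|\le\phi-1\le\varphi^{2}$. Since $R=L=\varnothing$ by Claim \ref{cl3.1}, every vertex of $\bigcup_{k\neq i}V_k$ satisfies the lower bound \eqref{eq2.5} on $d_{V_i}(\cdot)$; plugging these estimates into the inclusion--exclusion count underlying Claim \ref{cl2.4}, the set $N_H(u)$ has at least $\varphi$ (in fact $\Theta(n)$) common neighbors inside $V_i$. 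Because $|V(H)|\le\phi<\varphi$, we may choose such a common neighbor $u'\in V_i\setminus V(H)$. Replacing $u$ by $u'$ in the embedded copy of $H$ produces an isomorphic copy of $H$ inside $G$: the edges of $H$ not touching $u$ lie in $G^u-u=G-u$, while the edges $u'v$ with $v\in N_H(u)$ lie in $G$ by the choice of $u'$. This contradicts the $\mathcal{H}$-freeness of $G$, completing the proof.

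The main potential obstacle is that Claim \ref{cl2.4} is phrased around an auxiliary heavy vertex $u_0\in R\setminus L$, which does not exist in our current setting. However, this is actually in our favor: in the absence of such a $u_0$, every chosen vertex satisfies the stronger estimate \eqref{eq2.5} rather than the weaker \eqref{eq2.4}, so the same inclusion--exclusion bound immediately delivers a common neighborhood of size $\Theta(n)$ in $V_i$, far more than enough to find the avatar $u'$ avoiding $V(H)$.
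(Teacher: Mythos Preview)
Your proof is correct and follows essentially the same route as the paper: assume a missing cross-edge at some $u\in\widehat V_i$, add it back, use the strict increase in spectral radius to force a copy of some $H\in\mathcal{H}$ through $u$, and then apply the common-neighborhood estimate behind Claim~\ref{cl2.4} to swap $u$ for an avatar $u'\in V_i\setminus V(H)$, contradicting $\mathcal{H}$-freeness of $G$. The only cosmetic difference is that the paper adds a \emph{single} missing edge $u_0v_0$ rather than all of $W$ at once; both variants work, and your explicit remark that the absence of any $u_0\in R\setminus L$ only helps (every relevant vertex now satisfies the stronger bound~\eqref{eq2.5}) cleanly addresses the one point where the invocation of Claim~\ref{cl2.4} is formally delicate.
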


\begin{proof}
Suppose to the contrary that
$u_0\in \widehat{V}_i$ with $N_G(u_0)\nsupseteq\bigcup_{k\in[r]\setminus\{i\}}V_k$.
Then there exists $v_0\in \bigcup_{k\in[r]\setminus\{i\}}V_k$
such that $u_0v_0\notin E(G)$.
Denote $G'=G+\{u_0v_0\}$.
We have $\rho(G')>\rho(G)$, and thus
$G'$ contains a member $H\in \mathcal{H}$
and $u_0v_0\in E(H)$.
Observe that $N_{H}(u_0)\subseteq N_{G'}(u_0)\subseteq
\bigcup_{k\in [r]\setminus\{i\}}V_k$
and $d_H(u_0)<|H|\leq\phi<\varphi$.
By Claim \ref{cl2.4}, we can find at least $\varphi$ vertices in
$V_i$, and thus some $u\in V_i\setminus V(H)$,
adjacent to each vertex in $N_H(u_0).$
Thus, $G[(V(H)\setminus \{u_0\})\cup\{u\}]$
contains a copy of $H$, a contradiction.
Hence, $N_G(u)=\bigcup_{k\in[r]\setminus\{i\}}V_k$.

For each $i\in[r]$
and each $u\in \widehat{V}_i$, since $N_G(u)=\bigcup_{k\in[r]\setminus\{i\}}V_k$,
we have $\widehat{V}_i\cap V(G_{cr})=\varnothing$.
Thus, $V(G_{cr})\subseteq V(G)\setminus
\big(\bigcup_{i\in[r]}\widehat{V}_i\big)$.
Moreover, it is clear that
$V(G_{in})=V(G)\setminus\big(\bigcup_{i\in[r]}\widehat{V}_i\big)$.
Thus, $V(G_{cr})\subseteq V(G_{in})$.
\end{proof}

\begin{claim}\label{cl3.3}
For each $u\in V(G)$,
we have $x_u\geq\Big(1-\frac{4\varphi^3}{n}\Big)x_{u^*}$.
\end{claim}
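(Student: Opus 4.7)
The plan is to exploit the spectral extremality of $G$ by a one-vertex rewiring argument. For each $v\in V(G)$ and each index $t\in[r]$, I would consider the graph
\[
G_{v,t}=\bigl(G-\{vw:w\in N_G(v)\}\bigr)\cup\{vw:w\in V(G)\setminus V_t,\;w\ne v\},
\]
in which $v$ has neighborhood exactly $V(G)\setminus(V_t\cup\{v\})$. The first step is to verify that $G_{v,t}$ is $\mathcal{H}$-free. Since $G$ is $\mathcal{H}$-free and only edges at $v$ have been modified, any copy of an $H\in\mathcal{H}$ in $G_{v,t}$ must contain $v$, so $N_H(v)\subseteq V(G)\setminus V_t$ and $d_H(v)\le\phi-1<\varphi^2$. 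Applying Claim \ref{cl2.4} with $i^*=t$---which is legitimate here because $L=\varnothing$ by Claim \ref{cl3.1}, so every vertex of $N_H(v)$ satisfies the required degree hypothesis---produces $u\in V_t\setminus V(H)$ that is $G$-adjacent to every vertex of $N_H(v)$, whence $G[(V(H)\setminus\{v\})\cup\{u\}]$ already carries a copy of $H$, contradicting the $\mathcal{H}$-freeness of $G$. This uniform $\mathcal{H}$-freeness verification is the main technical step.

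Once $G_{v,t}$ is $\mathcal{H}$-free, $G\in{\rm SPEX}(n,\mathcal{H})$ gives $\rho(G)=\mathbf{x}^TA(G)\mathbf{x}\ge\rho(G_{v,t})\ge \mathbf{x}^TA(G_{v,t})\mathbf{x}$, so
\[
0\le \mathbf{x}^T(A(G)-A(G_{v,t}))\mathbf{x}
=2x_v\Bigl(\rho(G)x_v-\!\!\sum_{w\in V(G)\setminus V_t,\,w\ne v}\!\!x_w\Bigr).
\]
By Claim \ref{cl3.1} we have $\mathbf{x}>\mathbf{0}$, so this reduces to $\rho(G)x_v\ge\sum_{w\in V(G)\setminus V_t,\,w\ne v}x_w$. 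Writing $S=\sum_{w}x_w$, $S_t=\sum_{w\in V_t}x_w$ and $\xi_t:=(S-S_t)/\rho(G)$---which, by Claim \ref{cl3.2} together with $\widehat V_t\ne\varnothing$, equals $x_u$ for every $u\in\widehat V_t$---I read off
$x_v\ge\xi_t$ when $v\in V_t$, and $(\rho(G)+1)x_v\ge\rho(G)\xi_t$ when $v\notin V_t$.

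To finish I would specialize to $t=t_0$ with $u^*\in V_{t_0}$. Claim \ref{cl2.5} gives $d_{V_{t_0}}(u^*)\le\Delta-1$, so the eigen-equation at $u^*$ yields $\rho(G)x_{u^*}\le(\Delta-1)x_{u^*}+(S-S_{t_0})$, i.e.\ $\xi_{t_0}\ge(1-(\Delta-1)/\rho(G))x_{u^*}$. Plugging this into both cases above and using $(1-a)(1-b)\ge 1-a-b$ for $a,b\in[0,1]$ gives, uniformly in $v\in V(G)$,
\[
x_v\ge\Bigl(1-\frac{\Delta}{\rho(G)}\Bigr)x_{u^*}.
\]
Finally \eqref{eq3.2} gives $\rho(G)\ge(r-1)n/(2r)$ for $n$ large, and combining $\Delta\le\phi\le\varphi-1$ with $r/(r-1)\le 2$ and $\varphi\ge 1$ yields $\Delta/\rho(G)\le 2r\Delta/((r-1)n)\le 4\varphi/n\le 4\varphi^3/n$, completing the estimate. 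The subtle point is precisely the $\mathcal{H}$-freeness of $G_{v,t}$: it requires Claim \ref{cl2.4} to be applied with $u_0\notin L$ (rather than $u_0\in R\setminus L$ as literally stated), which is the same harmless extension already invoked implicitly in Claim \ref{cl3.1}.
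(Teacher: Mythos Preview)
Your argument is correct and follows essentially the same one-vertex-rewiring strategy as the paper: the paper rewires $u_0$ to the set $\widehat{V}=\bigcup_{i\ne r}\widehat{V}_i$ of isolated vertices and argues by contradiction (bounding the overhead at $u^*$ by $|V(G_{in})|\le 2r(\nu-1)\Delta$ via \eqref{eq3.6}), whereas you rewire to all of $V(G)\setminus V_t$, argue directly, and use the sharper $d_{V_{t_0}}(u^*)\le\Delta-1$ from Claim~\ref{cl2.5}. These are minor execution differences on the same idea; your version in fact yields the stronger constant $4\varphi/n$ before you relax to the stated $4\varphi^3/n$.
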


\begin{proof}
Assume without loss of generality that $u^*\in V_r$.
Denote $\widehat{V}=\bigcup_{i\in [r-1]}\widehat{V}_i.$
Every neighbor of $u^*$ is either an isolated vertex of some $V_i$
($i\neq r$) or incident to an edge within some $V_j$ ($j\in[r]$).
Thus, $N_G(u^*)\subseteq \big(\widehat{V}\cup V(G_{in})\big)$.
Combining \eqref{eq3.6}, we have
\begin{align}\label{eq3.7}
\rho(G)x_{u^*}\leq \!\!\sum_{v\in N_{V(G_{in})}(u^*)}\!\!x_v+
\sum_{v\in N_{\widehat{V}}(u^*)}x_v
\leq 2r(\nu-1)\Delta x_{u^*}+\sum_{v\in \widehat{V}}x_v.
\end{align}

Now we show $x_u\geq
(1-\frac{4\varphi^3}{n})x_{u^*}$.
Suppose to the contrary that $x_{u_0}<(1-\frac{4\varphi^3}{n})x_{u^*}$
for some $u_0\in V(G)$.
Let $G'$ be the graph obtained from $G$
by deleting all edges incident to $u_0$
and joining all edges between $u_0$
and $\widehat{V}\setminus \{u_0\}$.
Then $G'$ is $\mathcal{H}$-free,
and thus $\rho(G)\geq\rho(G')$.

In view of \eqref{eq3.2},
we have $\rho(G\geq\frac{r-1}{r}n-\frac{r}{4n}>\frac12 n-1$.
Combining \eqref{eq3.7},
we obtain
\begin{align*}
\sum_{v\in \widehat{V}\setminus\{u_0\}}x_v-\sum_{v\in N_G(u_0)}x_{v}
&=\sum_{v\in \widehat{V}}x_v -x_{u_0}-\rho(G)x_{u_0} \nonumber\\
&\geq\Big(\rho(G)-2r(\nu-1)\Delta-1\Big)x_{u^*}
-\rho(G)\Big(1-\frac{4\varphi^3}{n}\Big)x_{u^*}    \nonumber\\
&\geq\Big(2\varphi^3-2r(\nu-1)\Delta-2\Big)x_{u^*}.  \nonumber
\end{align*}
Recall that $\nu\leq\lfloor\frac{\phi}{2}\rfloor$,
$\Delta\leq \phi$ and $\varphi=\max\{r,\phi+1,5\}$.
It is easy to check that
$\sum_{v\in \widehat{V}\setminus\{u_0\}}x_v>\sum_{v\in N_G(u_0)}x_{v}.$
Consequently,
$\rho(G')-\rho(G)\geq
2x_{u_0}\big(\sum_{v\in \widehat{V}\setminus\{u_0\}}x_v
-\sum_{v\in N_G(u_0)}x_{v}\big)>0,$
a contradiction.
Thus, $x_u\geq\big(1-\frac{4\varphi^3}{n}\big)x_{u^*}$
for each $u\in V(G)$.
\end{proof}

\begin{claim}\label{cl3.4}
$e(G_{in})-e(G_{cr})\leq\alpha_1-\alpha_2$,
where $\alpha_1,\alpha_2$ are defined in Lemma \ref{lem3.4}.
\end{claim}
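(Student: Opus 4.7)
The plan is to derive the inequality by combining the edge-count bound $e(G)\le e(G^*)=e(T_{n,r})+\alpha_1-\alpha_2$ (which uses that $G\in\mathrm{SPEX}(n,\mathcal{H})$ is $\mathcal{H}$-free) with the spectral extremality of $G$. Since $e(G)=e(K)+e(G_{in})-e(G_{cr})$, the edge-count inequality rearranges to
\[
e(G_{in})-e(G_{cr})\le\bigl(e(T_{n,r})-e(K)\bigr)+(\alpha_1-\alpha_2).
\]
Because $T_{n,r}$ maximises the edge count over all complete $r$-partite graphs on $n$ vertices, $e(K)\le e(T_{n,r})$ with equality if and only if the partition $V_1,\ldots,V_r$ is balanced (so that $K\cong T_{n,r}$). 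The plan therefore reduces to proving that $K$ is balanced.

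To show balance, I would argue by contradiction: suppose $|V_1|-|V_r|\ge 2$. By Lemma~\ref{lem3.3}, $\rho(T_{n,r})-\rho(K)\ge\gamma/n$ for a constant $\gamma>0$. Testing the Perron unit vector $\mathbf{x}$ of $G$ against the decomposition $A(G)=A(K)+A(G_{in})-A(G_{cr})$, together with $\mathbf{x}^{T}A(K)\mathbf{x}\le\rho(K)$, Claim~\ref{cl3.3} (each $x_u$ lies within a $1-O(1/n)$ factor of $x_{u^*}$), the normalization $x_{u^*}^{2}\le\frac{1}{n}+O(\frac{1}{n^{2}})$ coming from $\|\mathbf{x}\|_{2}=1$, and the constant-size bounds $e(G_{in})\le 2r(\nu-1)\Delta$ from \eqref{eq3.6} and $e(G_{cr})=O(1)$ from Claim~\ref{cl3.2}, one obtains
\[
\rho(G)\le\rho(K)+\frac{2}{n}\bigl[e(G_{in})-e(G_{cr})\bigr]+O(1/n^{2}).
\]
Combining with Lemma~\ref{lem3.4} in the form $\rho(G)\ge\rho(G^{*})\ge\rho(T_{n,r})+2(\alpha_1-\alpha_2)/n-O(1/n^{2})$ and the Lemma~\ref{lem3.3} gap $\rho(T_{n,r})-\rho(K)\ge\gamma/n$ forces
\[
e(G_{in})-e(G_{cr})\ge(\alpha_1-\alpha_2)+\gamma/2-O(1/n).
\]

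The main obstacle is that this lower bound by itself does not yet contradict anything, since in the unbalanced case the edge-count upper bound $e(T_{n,r})-e(K)+\alpha_1-\alpha_2$ can similarly exceed $\alpha_1-\alpha_2$. I would close the argument by a rebalancing/vertex-swap: pick $u\in V_1\setminus V(G_{in})$, which exists because $|V_1|\ge n/r-o(n)$ while $|V(G_{in})|=O(1)$; by Claim~\ref{cl3.2}, $u\notin V(G_{cr})$ either, so $N_G(u)=\bigcup_{k\neq 1}V_k$. Reassigning $u$ from $V_1$ to $V_r$ and updating its adjacencies accordingly yields a graph $G'$ with $e(G')-e(G)=|V_1|-|V_r|-1\ge 1$, and using the Perron vector of $G$ as a test vector for $G'$ together with Claim~\ref{cl3.3} should give $\rho(G')>\rho(G)$. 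The critical technical step — which I expect to be the hardest part — is the verification that $G'$ remains $\mathcal{H}$-free: any hypothetical embedding of some $H\in\mathcal{H}$ in $G'$ involving $u$ places $N_H(u)$ inside $\bigcup_{k\neq r}V_k$, and Claim~\ref{cl2.4} then supplies a common neighbour $u'\in V_r\setminus V(H)$ enabling one to reroute the copy inside $G$, contradicting the $\mathcal{H}$-freeness of $G$. The resulting $\rho(G')>\rho(G)=\mathrm{spex}(n,\mathcal{H})$ contradicts $G\in\mathrm{SPEX}(n,\mathcal{H})$, forcing $K$ to be balanced and proving the claim.
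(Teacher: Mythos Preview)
Your reduction to showing that $K$ is balanced is sound, but the vertex-swap step where you claim $\rho(G')>\rho(G)$ does not go through with the tools at hand. You need
\[
\sum_{v\in V_1\setminus\{u\}}x_v-\sum_{v\in V_r}x_v>0.
\]
Claim~\ref{cl3.3} controls each $x_v$ only up to a multiplicative factor $1-4\varphi^3/n$, so in the worst case the left side is at least
\[
(|V_1|-1)\Bigl(1-\tfrac{4\varphi^3}{n}\Bigr)x_{u^*}-|V_r|\,x_{u^*}
=\Bigl(|V_1|-1-|V_r|\Bigr)x_{u^*}-\tfrac{4\varphi^3}{n}(|V_1|-1)x_{u^*}.
\]
The first bracket is at least $1$, but since $|V_1|=\Theta(n/r)$ the correction term $\tfrac{4\varphi^3}{n}(|V_1|-1)$ is a fixed positive constant of order $4\varphi^3/r$, which is far larger than $1$. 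Thus Claim~\ref{cl3.3} is too coarse: summing $O(1/n)$ relative errors over $\Theta(n)$ vertices produces an $O(1)$ defect that swamps the single extra term you gain from $|V_1|-1>|V_r|$. No finer eigenvector information is available at this point in the argument (indeed, in the paper the balance of $K$ is only deduced \emph{after} Claim~\ref{cl3.4}, using Claim~\ref{cl3.4} as an input to the inequality \eqref{eq3.9}).

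The paper bypasses the balance question altogether. Since $|V(G_{in})|\le 2r(\nu-1)\Delta$ and $V(G_{cr})\subseteq V(G_{in})$ by Claim~\ref{cl3.2}, one can choose in each $V_i$ a subset $V_i'$ of the common size $\lfloor n/(2r)\rfloor$ containing $V_i\cap V(G_{in})$. The induced subgraph $G[\bigcup_iV_i']$ is then a balanced Tur\'an graph $T_{r\lfloor n/(2r)\rfloor,r}$ with exactly $e(G_{in})$ edges added and $e(G_{cr})$ edges removed, and applying the extremal bound ${\rm ex}(m,\mathcal{H})=e(T_{m,r})+\alpha_1-\alpha_2$ at $m=r\lfloor n/(2r)\rfloor$ yields the claim in one line. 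This sidesteps any need to compare $e(K)$ with $e(T_{n,r})$ or to know the part sizes of $K$.
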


\begin{proof}
By \eqref{eq3.6}, we have
$|G_{in}|\leq2r(\nu-1)\Delta<\frac{n}{2r}.$
Moreover, $|V_i|\geq (\frac1r-\varepsilon^{\frac13})n$
by Claim \ref{cl2.1}.
Then for each $i\in [r]$,
$V_i$ admits a subset $V_i'$ of size
$\lfloor\frac{n}{2r}\rfloor$ such that
$\big(V_i\cap V(G_{in})\big)\subseteq V_i'$.
Combining Claim \ref{cl3.2}, we have
$V(G_{cr})\subseteq V(G_{in})\subseteq\cup_{i\in[r]}V_i'.$
Thus,
\begin{align*}
e\big(G[\cup_{i\in[r]}V_i']\big)
=e\big(T_{r\lfloor\frac{n}{2r}\rfloor,r}\big)
+e(G_{in})-e(G_{cr}).
\end{align*}

By Lemma \ref{lem3.4},
${\rm ex}(n,\mathcal{H})=e(T_{n,r})+\alpha_1-\alpha_2$.
Since $G$ is $\mathcal{H}$-free, $G[\cup_{i=1}^{r}V_i']$ is too.
Hence, $e\big(G[\cup_{i\in[r]}V_i']\big)\leq
e\big(T_{r\lfloor\frac{n}{2r}\rfloor,r}\big)+\alpha_1-\alpha_2,$
which implies $e(G_{in})-e(G_{cr})\leq\alpha_1-\alpha_2.$
\end{proof}

Note that $\mathbf{x}$ is a unit vector.
Then $\sum_{u\in V(G)}x_u^2=1$.
By Claim \ref{cl3.3}, we have
\begin{align*}
nx_{u^*}^2\geq \sum_{u\in V(G)}x_u^2=1
\geq n\Big(1-\frac{4\varphi^3}{n}\Big)^2x_{u^*}^2\geq
(n-8\varphi^3)x_{u^*}^2,
\end{align*}
which yields that $\frac{1}{n}\leq x_{u^*}^2\leq
\frac1{n-8\varphi^3}
\leq\frac{1}{n}+\frac{9\varphi^3}{n^2}$.
By \eqref{eq3.6} and Claim \ref{cl3.2},
we have
$|G_{cr}|\leq|G_{in}|\leq 2r(\nu-1)\Delta$;
and by \eqref{eq2.1} we get $r(\nu-1)\Delta\leq\varphi^3$.
Thus, $e(G_{cr})\leq\binom{|G_{cr}|}{2}\leq2\varphi^6$ and
\begin{align}\label{eq3.8}
e(G_{cr})\Big(1-\frac{4\varphi^3}{n}\Big)^2
\geq e(G_{cr})-e(G_{cr})\frac{8\varphi^3}{n}
\geq e(G_{cr})
-\frac{16\varphi^9}{n}.
\end{align}

Denote $\beta=e(G_{cr})-e(G_{in})$.
Recall that $E(G)=E(K)-E(G_{cr})+E(G_{in})$.
Consequently, $E(G)=E(K)-\beta$.
Combining \eqref{eq3.8} and Claim \ref{cl3.3},
we obtain
\begin{align*}
\rho(K)-\rho(G) &\geq
\!\!\sum_{uv\in E(G_{cr})}\!\!2x_ux_v
-\!\!\sum_{uv\in E(G_{in})}\!\!2x_ux_v
\geq 2e(G_{cr})\Big(1-\frac{4\varphi^3}{n}\Big)^2 x_{u^*}^2-2e(G_{in})x_{u^*}^2 \nonumber\\
&\geq 2\Big(\beta-\frac{16\varphi^9}{n}\Big)x_{u^*}^2.
\end{align*}
Notice that $\frac{1}{n}\leq x_{u^*}^2
\leq\frac{1}{n}+\frac{9\varphi^3}{n^2}$.
If $\beta\geq1$, then
$\rho(K)-\rho(G)
\geq2\big(\beta
-\frac{16\varphi^9}{n}\big)\frac1n
=\frac{2\beta}{n}-\frac{\beta_1}{n^2}$,
where $\beta_1=32\varphi^9$.
If $\beta\leq0$, then
$\rho(K)-\rho(G)
\geq2\big(\beta
-\frac{16\varphi^9}{n}\big)(\frac{1}{n}+\frac{9\varphi^3}{n^2}),$
and by Claim \ref{cl3.4} we have $\beta\geq \alpha_2-\alpha_1$.
Hence, we can also obtain
$\rho(K)-\rho(G)\geq \frac{2\beta}{n}-\frac{\beta_2}{n^2}$
for some constant $\beta_2$.

Recall that $G^*\in{\rm EX}(n,\mathcal{H})$
and $G\in{\rm SPEX}(n,\mathcal{H})$.
Then $\rho(G)\geq\rho(G^*).$
Combining Lemma \ref{lem3.4}, we obtain
$\rho(G)-\rho(T_{n,r})\geq
\frac{2(\alpha_1-\alpha_2)}{n}-\frac{6\alpha_1}{n^2}$.
Furthermore, we have
\begin{align}\label{eq3.9}
\rho(K)-\rho(T_{n,r})
=\Big(\rho(K)-\rho(G)\Big)
+\Big(\rho(G)-\rho(T_{n,r})\Big)
\geq\frac{2(\beta+\alpha_1-\alpha_2)}{n}
-\frac{\gamma_1}{n^2},
\end{align}
where $\gamma_1=\max\{\beta_1,\beta_2\}+6\alpha_1$
and $\beta\geq \alpha_2-\alpha_1$.

Now assume that
$|V_1|\geq\cdots\geq|V_r|$.
If $|V_1|\geq |V_r|+2$,
then by Lemma \ref{lem3.3} we have
$\rho(T_{n,r})-\rho(K)\geq \frac{\gamma}{n}$
for some constant $\gamma>0$,
which contradicts \eqref{eq3.9}.
Therefore, $|V_r|\leq|V_1|\leq|V_r|+1$,
and thus $K\cong T_{n,r}$.
Now \eqref{eq3.9} becomes
$0\geq\frac{2(\beta+\alpha_1-\alpha_2)}{n}
-\frac{\gamma_1}{n^2}$,
which implies that $\beta\leq\alpha_2-\alpha_1$.
Therefore, $\beta=\alpha_2-\alpha_1$ and
$e(G)=E(K)-\beta=e(T_{n,r})+\alpha_1-\alpha_2.$
By Lemma \ref{lem3.4},
${\rm ex}(n,\mathcal{H})=e(T_{n,r})+\alpha_1-\alpha_2$.
Thus, $G\in {\rm EX}(n,\mathcal{H})$
as $G$ is $\mathcal{H}$-free.
This completes the proof of Theorem \ref{thm1.3}.
\end{proof}

\section{Proof of Theorem \ref{thm1.5}}\label{5}

In this section, we characterize
$\mathrm{SPEX}\big(n,\mathbb{G}(F_1,\ldots,F_k)\big)$.
Combining Theorems \ref{thm1.3} and \ref{thm1.4},
we immediately have the following result.

\begin{lem}\label{lem5.1}
Every graph in $\mathrm{SPEX}\big(n,\mathbb{G}(F_1,\ldots,F_k)\big)$
is obtained from $T_{n,r}$ by embedding $k-1$ edges
within its partite sets.
\end{lem}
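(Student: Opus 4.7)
The plan is to view this lemma as a direct corollary of the two preceding theorems by checking that the family $\mathcal{H}=\mathbb{G}(F_1,\ldots,F_k)$ meets the hypotheses of Theorem \ref{thm1.3}. There are really only two conditions to verify: the chromatic number condition and the Tur\'an-number upper bound.

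First I would verify that $\chi(\mathcal{H})=r+1$. For any member $H\in\mathbb{G}(F_1,\ldots,F_k)$, $H$ contains $F_1$ as a subgraph, so $\chi(H)\geq \chi(F_1)=r+1$, giving $\chi(\mathcal{H})\geq r+1$. For the reverse inequality, the vertex-disjoint union of $F_1,\ldots,F_k$ is a member of $\mathbb{G}(F_1,\ldots,F_k)$ (its $F_j$'s are trivially edge-disjoint) and has chromatic number $\max_j\chi(F_j)=r+1$, so $\chi(\mathcal{H})=r+1$.

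Next I would check that $\mathrm{ex}(n,\mathcal{H})<e(T_{n,r})+\lfloor n/(2r)\rfloor$ for large $n$. This is immediate from Theorem \ref{thm1.4}: every graph in $\mathrm{EX}(n,\mathbb{G}(F_1,\ldots,F_k))$ is obtained from $T_{n,r}$ by embedding exactly $k-1$ edges inside the partite sets, so $\mathrm{ex}(n,\mathcal{H})=e(T_{n,r})+k-1$. Since $k-1$ is a fixed constant while $\lfloor n/(2r)\rfloor\to\infty$, the required strict inequality holds for $n$ sufficiently large.

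Having checked both hypotheses, Theorem \ref{thm1.3} applies and yields $\mathrm{SPEX}(n,\mathcal{H})\subseteq \mathrm{EX}(n,\mathcal{H})$. Composing this with the characterization in Theorem \ref{thm1.4} gives the claim: every graph in $\mathrm{SPEX}(n,\mathbb{G}(F_1,\ldots,F_k))$ is obtained from $T_{n,r}$ by embedding $k-1$ edges within its partite sets. There is no real obstacle here beyond confirming the two hypotheses; the whole content of the lemma is absorbed into the two preceding theorems.
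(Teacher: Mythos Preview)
Your proposal is correct and follows exactly the paper's approach: the lemma is stated as an immediate consequence of combining Theorems \ref{thm1.3} and \ref{thm1.4}, and you have simply spelled out the verification of the two hypotheses (the chromatic number and the Tur\'an-number bound) that the paper leaves implicit.
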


Before proceeding,
we shall introduce two more lemmas.

\begin{lem}\emph{(\!\cite{Wu-2005})}\label{lem5.2}
Assume that $G$ is a connected graph with $u,v\in V(G)$
and $w_1,w_2,\ldots,w_s$ $\in N_G(v)\setminus N_G(u)$.
Let $\mathbf{x}=(x_1,x_2,\ldots,x_n)^T$ be the Perron vector of $G$,
and $G'=G-\{vv_i: i\in [s]\}+\{uv_i: i\in [s]\}$.
If $x_u\geq x_v$, then $\rho(G')>\rho(G)$.
\end{lem}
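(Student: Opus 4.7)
The plan is to apply the Rayleigh quotient principle to the Perron vector $\mathbf{x}$ of $G$, and then upgrade the resulting inequality to a strict one by exploiting the positivity of $\mathbf{x}$ guaranteed by the Perron--Frobenius theorem on the connected graph $G$.

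First I would observe that $A(G')$ and $A(G)$ differ exactly on the $s$ swapped edge pairs (each edge $vw_i$ removed, each edge $uw_i$ added), so a direct computation of the quadratic form gives
\begin{equation*}
\mathbf{x}^{T}\bigl(A(G')-A(G)\bigr)\mathbf{x}=2\sum_{i=1}^{s}x_{w_i}(x_u-x_v).
\end{equation*}
Connectedness of $G$ together with Perron--Frobenius forces $\mathbf{x}>0$, so $x_{w_i}>0$ for each $i\in[s]$. Combined with the hypothesis $x_u\geq x_v$, this quadratic form is non-negative, and the Rayleigh principle applied to the symmetric matrix $A(G')$ at the unit vector $\mathbf{x}/\|\mathbf{x}\|$ yields
\begin{equation*}
\rho(G')\geq\frac{\mathbf{x}^{T}A(G')\mathbf{x}}{\mathbf{x}^{T}\mathbf{x}}\geq\frac{\mathbf{x}^{T}A(G)\mathbf{x}}{\mathbf{x}^{T}\mathbf{x}}=\rho(G).
\end{equation*}

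To sharpen this to a strict inequality, I would argue by contradiction: suppose $\rho(G')=\rho(G)$. Then equality in the Rayleigh quotient forces $\mathbf{x}$ itself to be an eigenvector of $A(G')$ for $\rho(G')$. Comparing the eigenvalue equations at vertex $u$ in $G$ and in $G'$, and using $w_i\notin N_G(u)$ for every $i\in[s]$, one finds $\rho(G')x_u-\rho(G)x_u=\sum_{i=1}^{s}x_{w_i}$. The left-hand side vanishes by hypothesis, while the right-hand side is strictly positive because $s\geq 1$ and $\mathbf{x}>0$, a contradiction. The only genuinely delicate point in the whole argument is ensuring this Rayleigh bound is \emph{strict}; as shown, that reduces entirely to the strict positivity of the Perron vector, which in turn rests essentially on the connectedness of $G$.
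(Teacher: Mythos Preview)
Your argument is correct. The Rayleigh--quotient estimate gives $\rho(G')\geq\rho(G)$, and your strictness step is clean: if equality held, $\mathbf{x}$ would have to be an eigenvector of $A(G')$ for $\rho(G')$, and comparing the $u$-th coordinates of $A(G')\mathbf{x}$ and $A(G)\mathbf{x}$ immediately yields $\sum_{i=1}^s x_{w_i}=0$, impossible since $\mathbf{x}>0$ and $s\geq 1$. (Implicitly one uses $u\neq v$ and $w_i\neq u$, which are forced by $s\geq 1$ and the standing simple-graph convention.)

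There is nothing to compare against in the paper: Lemma~5.2 is quoted from \cite{Wu-2005} without proof. Your write-up is essentially the standard proof of this edge-switching lemma and would serve perfectly well as a self-contained justification.
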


\begin{lem}\emph{(\!\cite{Ning-arxiv})}\label{lem5.3}
Let $G$ be a graph with $m$ edges. Then
$\sum_{v\in V(G)}d^2_G(v)\leq m^2+m.$
\end{lem}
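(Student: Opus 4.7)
The plan is a direct double-counting argument in two steps, using nothing beyond the handshaking lemma.

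First, I would establish the identity
$$\sum_{v \in V(G)} d_G^2(v) \;=\; \sum_{uv \in E(G)} \bigl(d_G(u)+d_G(v)\bigr).$$
This follows by rewriting $d_G(v)^2 = \sum_{u \in N_G(v)} d_G(v)$ and swapping the order of summation: the resulting double sum runs over ordered pairs $(v,u)$ with $uv \in E(G)$, and each unordered edge $\{u,v\}$ contributes once as $(u,v)$ and once as $(v,u)$, producing a total contribution of $d_G(u)+d_G(v)$.

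Second, I would bound $d_G(u)+d_G(v) \le m+1$ for every edge $uv \in E(G)$. By inclusion--exclusion, the number of distinct edges incident to at least one of $u,v$ equals $d_G(u)+d_G(v)-1$ (the edge $uv$ itself is counted twice in the sum of degrees while all other incident edges are counted once). Since this set of edges is a subset of $E(G)$, it has size at most $m$, which yields the claimed bound.

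Summing the per-edge inequality over the $m$ edges of $G$ and combining with the identity above gives
$$\sum_{v \in V(G)} d_G^2(v) \;\le\; m(m+1) \;=\; m^2+m,$$
which is exactly the statement. There is no genuine obstacle here; the only point that requires a moment's care is the double-counting identity. As a sanity check one can note that equality is attained precisely on the star $K_{1,m}$, where $\sum_v d_G^2(v) = m^2 + m$.
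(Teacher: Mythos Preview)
Your proof is correct; the paper states this lemma as a citation to \cite{Ning-arxiv} and gives no proof of its own, so there is nothing to compare against. One tiny quibble with your closing remark: equality is not attained \emph{precisely} on $K_{1,m}$ --- the triangle $K_3$ (with $m=3$) also gives $\sum_v d_G^2(v)=12=m^2+m$ --- but this does not affect the argument itself.
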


Now we provide the proof of Theorem \ref{thm1.5}.

\begin{proof}
Let $G^*\in \mathrm{SPEX}\big(n,\mathbb{G}(F_1,\ldots,F_k)\big)$
with $E(G^*)\setminus E(T_{n,r})=E_0$ (where $|E_0|=k-1\geq1$).
Since $G^*-E_0\cong T_{n,r}$, we may assume that
$V_1,V_2,\ldots,V_r$
are $r$ partite sets of $G^*-E_0$.
Obviously, $\lfloor\frac nr\rfloor\leq|V_i|
\leq\lceil\frac nr\rceil$ for $i\in [r]$.

Let $\rho=\rho(G^*)$ and
$\mathbf{x}=(x_1,x_2,\ldots,x_n)^T$ be the Perron vector of $G^*$.
Moreover,
let $H$ be the subgraph of $G^*$ induced by the edges in $E_0$,
and $H_i$ be the subgraph induced by the vertex subset $V(H)\cap V_i$
for $i\in [r]$.
For convenience,
we use $x_{V_i}$ to denote $\sum_{v\in V_i}x_v$,
and let $x_V=x_{V(G^*)}$ for simplicity.
We now prove a series of claims.

\begin{claim}\label{cl5.1}
Denote $\overline{V_i}=V(G^*)\setminus V_i$
for $i\in [r]$.
Then
\begin{equation}\label{eq5.1}
\frac{\rho x_V}{\rho+|V_i|+\frac{2e(H_i)}{\rho-k+1}}
\leq x_{\overline{V_i}}
\leq\frac{\rho x_V}{\rho+|V_i|+\frac{2e(H_i)}{\rho}}.
\end{equation}
Moreover, if $|V_i|=|V_j|$ and $e(H_i)<e(H_j)$,
then $0<x_{\overline{V_i}}-x_{\overline{V_j}}
\leq\frac1{\Theta({n^3})}\rho x_V.$
\end{claim}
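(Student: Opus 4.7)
\textbf{Proof plan for Claim \ref{cl5.1}.} The core idea is to apply the eigenvalue equation $\rho x_u = \sum_{w \in N(u)} x_w$ locally within each partite set $V_i$ and then sum. For any $u \in V_i$, the neighborhood in $G^*$ consists of all of $\overline{V_i}$ (because $G^* - E_0 \cong T_{n,r}$) plus the $H$-neighbors lying in $V_i$, namely $N_{H_i}(u)$. Summing $\rho x_u$ over $u \in V_i$ and using $x_V = x_{V_i} + x_{\overline{V_i}}$, I would arrive at the key identity
\begin{equation*}
\rho x_V = (\rho + |V_i|)\, x_{\overline{V_i}} + S_i, \qquad S_i := \sum_{u \in V(H_i)} d_{H_i}(u)\, x_u,
\end{equation*}
so everything reduces to estimating the ``correction'' $S_i$ from above and below.

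For the lower bound on $S_i$, I would use the eigenvalue equation once more: any $u \in V_i$ satisfies $\rho x_u \geq x_{\overline{V_i}}$, hence $x_u \geq x_{\overline{V_i}}/\rho$, and summing against $d_{H_i}(u)$ gives $S_i \geq \frac{2 e(H_i)}{\rho} x_{\overline{V_i}}$. For the upper bound, I would apply the classical ``max trick'': let $M_i = \max_{u \in V(H_i)} x_u$, taken at a vertex $u^\star$. Then $\rho M_i = x_{\overline{V_i}} + \sum_{w \in N_{H_i}(u^\star)} x_w \leq x_{\overline{V_i}} + d_{H_i}(u^\star) M_i \leq x_{\overline{V_i}} + (k-1) M_i$ (using $e(H) = k-1$), which yields $M_i \leq x_{\overline{V_i}}/(\rho - k + 1)$ and hence $S_i \leq \frac{2 e(H_i)}{\rho - k + 1} x_{\overline{V_i}}$. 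Plugging these two bounds into the identity and solving for $x_{\overline{V_i}}$ produces exactly the sandwich in \eqref{eq5.1}.

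For the ``moreover'' statement, I would apply \eqref{eq5.1} with the hypothesis $|V_i| = |V_j|$ and $e(H_i) \leq e(H_j) - 1$, taking the lower bound for index $i$ and the upper bound for index $j$ (and vice versa) to bracket $x_{\overline{V_i}} - x_{\overline{V_j}}$. After setting $a := \rho + |V_i| = \rho + |V_j|$ and writing each bound as $\rho x_V / (a + \varepsilon)$ with $\varepsilon = O(1/\rho)$, a direct $\tfrac{1}{A} - \tfrac{1}{B} = \tfrac{B-A}{AB}$ computation shows the relevant numerator is of order $(e(H_j) - e(H_i))/\rho = \Theta(1/n)$ (positive thanks to $\rho \gg (k-1)^2$), while the denominator is $a^2 = \Theta(n^2)$. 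Combined, the gap is of order $\rho x_V / \Theta(n^3)$, with positivity secured by the lower estimate and the asymptotic upper bound by the upper estimate.

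The main obstacle is the upper bound on $S_i$: the eigenvalue equation directly gives $x_u \geq x_{\overline{V_i}}/\rho$ for every $u \in V_i$, but controlling $x_u$ from above for $u \in V(H_i)$ requires exploiting $d_{H_i}(u) \leq k-1$ via the max trick rather than any naive bound. Once this is in hand, the ``moreover'' part is a short algebraic manipulation, provided one is careful to track that $\rho$ and $|V_i|$ are both $\Theta(n)$ while $k$ and $e(H_i)$ are $O(1)$ so that all the error terms truly land at the $\Theta(n^3)$ scale.
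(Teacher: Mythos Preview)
Your proposal is correct and follows essentially the same argument as the paper: sum the eigenvalue equations over $V_i$ to obtain $\rho x_V=(\rho+|V_i|)x_{\overline{V_i}}+S_i$ with $S_i=\sum_{u\in V(H_i)}d_{H_i}(u)x_u$, bound $S_i$ below using $x_u\ge x_{\overline{V_i}}/\rho$ and above via the max trick with $d_{H_i}(u^\star)\le k-1$, and then handle the ``moreover'' part by subtracting the two instances of \eqref{eq5.1} and tracking the $\Theta(1/n)$ numerator against the $\Theta(n^2)$ denominator. The paper separates out the trivial case $V(H_i)=\varnothing$ explicitly, but your argument covers it implicitly since then $S_i=0=e(H_i)$.
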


\begin{proof}
If $V(H_i)=\varnothing$, then
$V_i$ is an independent set of $G^*$, and hence
$\rho x_v=\sum_{u\in N(v)}x_u=x_{\overline{V_i}}$
for each $v\in V_i$.
Thus, $\rho x_{V_i}=|V_i|x_{\overline{V_i}}$
and $\rho x_{V}=\rho(x_{\overline{V_i}}+x_{V_i})
=(\rho+|V_i|)x_{\overline{V_i}}.$
Note that now $e(H_i)=0$.
Therefore, (\ref{eq5.1}) holds in equality.

Now assume that $V(H_i)\neq\varnothing.$
Then $\rho x_v=x_{\overline{V_i}}$
for each $v\in V_i\setminus V(H_i)$,
and $\rho x_v=x_{\overline{V_i}}+\sum_{u\in N_{H_i}(v)}x_u$
for each $v\in V(H_i)$.
It follows that
\begin{equation}\label{eq5.2}
\rho x_{V_i}=\sum_{v\in V_i}\rho x_v=
|V_i|x_{\overline{V_i}}+
\sum_{v\in V(H_i)}d_{H_i}(v)x_v.
\end{equation}

It is easy to see that
$\min_{v\in V_i}x_v=\frac{x_{\overline{V_i}}}{\rho}.$
On the one hand,
let $v^*\in V_i$ with $x_{v^*}=\max_{v\in V_i}x_v$.
Then $v^*\in V(H_i)$ and
$\rho x_{v^*}=x_{\overline{V_i}}+\sum_{u\in N_{H_i}(v^*)}x_u
\leq x_{\overline{V_i}}+(k-1)x_{v^*}.$
It follows that $x_{v^*}\leq\frac{x_{\overline{V_i}}}{\rho-k+1}.$
Combining (\ref{eq5.2}), we obtain
\begin{equation}\label{eq5.3}
|V_i|x_{\overline{V_i}}+
2e(H_i)\frac{x_{\overline{V_i}}}{\rho}
\leq\rho x_{V_i}\leq
|V_i|x_{\overline{V_i}}+
2e(H_i)\frac{x_{\overline{V_i}}}{\rho-k+1}.
\end{equation}
Note that $\rho x_{V_i}=\rho(x_V-x_{\overline{V_i}})$.
In view of (\ref{eq5.3}), we get (\ref{eq5.1}) immediately.

Observe that $e(H_i)+e(H_j)\leq k-1$
and $\rho\geq \rho(T_{n,r})=\Theta(n).$
If $e(H_i)<e(H_j)$, that is, $e(H_i)\leq e(H_j)-1$,
then $\frac{2e(H_i)}{\rho-k+1}<\frac{2e(H_j)}{\rho}.$
Combining (\ref{eq5.1}) and $|V_i|=|V_j|$, we can see that
$x_{\overline{V_i}}>x_{\overline{V_j}}.$
On the other hand,
Combining (\ref{eq5.1}) and $|V_i|=|V_j|$ also gives that
\begin{equation*}
x_{\overline{V_i}}-x_{\overline{V_j}}\leq
\frac{\rho x_V}{\rho+|V_i|+\frac{2e(H_i)}{\rho}}-
\frac{\rho x_V}{\rho+|V_j|+\frac{2e(H_j)}{\rho-k+1}}
=\frac1{\Theta({n^3})}\rho x_V.
\end{equation*}
Hence, the claim holds.
\end{proof}

In the proof of Claim \ref{cl5.1},
we do not use the condition that $G^*$ is a spectral extremal graph.
Therefore, Claim \ref{cl5.1} holds for every graph $G$ in $\mathrm{EX}\big(n,\mathbb{G}(F_1,\ldots,F_k)\big)$.

\begin{claim}\label{cl5.2}
If $V(H_i)\neq\varnothing$, then $|V_i|=\lfloor\frac{n}r\rfloor$.
\end{claim}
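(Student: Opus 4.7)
I argue by contradiction. Suppose $V(H_i)\neq\varnothing$ and $|V_i|>\lfloor n/r\rfloor$; then $|V_i|=\lceil n/r\rceil$ and there is some index $j$ with $|V_j|=\lfloor n/r\rfloor=|V_i|-1$. My goal is to construct a graph $G'$ still of the form $T_{n,r}$ plus $k-1$ edges within its partite sets---and hence $\mathbb{G}(F_1,\ldots,F_k)$-free by Lemma~\ref{lem4.2}---with $\rho(G')>\rho(G^*)$, contradicting the extremality of $G^*$.

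Define $G'$ by deleting every edge of $H_i$ from $G^*$ and inserting an isomorphic copy of $H_i$ inside $V_j$, using vertices of $V_j\setminus V(H_j)$. Such a choice exists because $|V_j|=\lfloor n/r\rfloor$ is large while $|V(H_i)|+|V(H_j)|\leq 2(k-1)$ is a constant. In $G'$ the part $V_i$ carries no internal edges, while $V_j$ carries $H_j$ together with the inserted copy of $H_i$ as a disjoint union, and all other parts are unchanged.

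The heart of the argument is to prove $\rho(G')>\rho(G^*)$. A direct Rayleigh bound using the Perron vector of $G^*$ is too crude here, since the eigenvector itself must shift. Instead, I follow the derivation in Claim~\ref{cl5.1} to write a secular equation valid for any graph of this form. Setting $g_{H}(\rho):=\mathbf{1}^{T}A(H)(\rho I-A(H))^{-1}\mathbf{1}$, which equals $0$ when $H$ is edgeless and is positive and decreasing in $\rho$ when $H$ has edges and $\rho>\rho(H)$, the same calculation as in Claim~\ref{cl5.1} gives $x_{\overline{V_k}}=\frac{\rho x_V}{\rho+|V_k|+g_{H_k}(\rho)}$. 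Summing the identity $\sum_k x_{\overline{V_k}}=(r-1)x_V$ then yields
\[
F_G(\rho):=\sum_{k=1}^{r}\frac{\rho}{\rho+|V_k|+g_{H_k}(\rho)}=r-1.
\]
Each summand is strictly increasing in $\rho$ because $|V_k|+g_{H_k}(\rho)-\rho g_{H_k}'(\rho)>0$, so $F_G$ is strictly increasing. Comparing $F_{G^*}$ with $F_{G'}$, the two differ only in the contributions from $V_i$ and $V_j$, and a short calculation gives
\[
F_{G^*}(\rho)-F_{G'}(\rho)=\rho\,g_{H_i}(\rho)\!\left[\frac{1}{(\rho+|V_j|+g_{H_j})(\rho+|V_j|+g_{H_j}+g_{H_i})}-\frac{1}{(\rho+|V_i|+g_{H_i})(\rho+|V_i|)}\right]>0,
\]
since $|V_i|>|V_j|$ forces both factors in the left denominator to be strictly smaller. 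Combining $F_{G^*}(\rho(G^*))=F_{G'}(\rho(G'))=r-1$ with $F_{G^*}>F_{G'}$ pointwise and the monotonicity of $F_{G^*}$, we conclude $\rho(G')>\rho(G^*)$, which is the desired contradiction.

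The main technical hurdle is verifying the positivity of $g_{H_i}$ and the monotonicity of each summand in the regime $\rho=\Theta(n)$. This follows from the spectral expansion $g_H(\rho)=\sum_{\lambda\in\mathrm{spec}(A(H))}\frac{\langle\mathbf{1},\mathbf{v}_\lambda\rangle^{2}\lambda}{\rho-\lambda}$: because $\rho=\Theta(n)$ dwarfs $\rho(H)=O(1)$, the Perron term $\lambda=\rho(H)$ (with positive coefficient) dominates, yielding $g_H(\rho)>0$ and $g_H'(\rho)<0$, as needed.
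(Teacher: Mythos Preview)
Your proof is correct and takes a genuinely different route from the paper's. Both arguments begin identically---assume $|V_i|=\lceil n/r\rceil>\lfloor n/r\rfloor=|V_j|$, and define $G'$ by transplanting a copy of $H_i$ from $V_i$ into $V_j\setminus V(H_j)$---but diverge at the comparison step. The paper uses Rowlinson's double-eigenvector identity $\mathbf{x}^{T}\mathbf{y}(\rho'-\rho)=\mathbf{x}^{T}(A(G')-A(G^*))\mathbf{y}$ with the Perron vectors $\mathbf{x}$ of $G^*$ and $\mathbf{y}$ of $G'$, and then plugs in the two-sided bounds on $x_{\overline{V_i}}$, $y_{\overline{V_i}}$ from Claim~\ref{cl5.1} to obtain an explicit lower bound of the form $\frac{c_1}{d_1}-\frac{c_2}{d_2}>0$. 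You instead upgrade Claim~\ref{cl5.1} from an inequality to an exact resolvent identity $x_{\overline{V_k}}=\rho x_V/(\rho+|V_k|+g_{H_k}(\rho))$, extract a scalar secular equation $F_G(\rho)=r-1$, and compare $F_{G^*}$ with $F_{G'}$ as functions. Your approach is more conceptual once the secular equation is set up---the sign of $F_{G^*}-F_{G'}$ is transparent, and you avoid juggling two eigenvectors simultaneously. The paper's approach, on the other hand, stays within the toolkit already used elsewhere in the section and does not require establishing monotonicity of $F_G$ or invertibility of $\rho I-A(H_k)$.

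Two small points to tighten. First, your inequality ``$|V_i|>|V_j|$ forces both factors in the left denominator to be strictly smaller'' silently uses $g_{H_i}(\rho)+g_{H_j}(\rho)<1$; this does hold since each $g_H(\rho)=O(1/\rho)=O(1/n)$, but should be stated. Second, your final paragraph's appeal to ``the Perron term dominates'' is not quite the right mechanism: all spectral terms in $g_H(\rho)=\sum_\lambda c_\lambda\lambda/(\rho-\lambda)$ are of the same order $O(1/\rho)$. The cleaner justification is the Neumann series $g_H(\rho)=\sum_{m\ge1}\mathbf{1}^{T}A(H)^{m}\mathbf{1}/\rho^{m}$, in which every term is nonnegative; this gives $g_H(\rho)>0$ whenever $e(H)>0$ and $g_H'(\rho)\le0$ immediately, for all $\rho>\rho(H)$, not merely for $\rho=\Theta(n)$.
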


\begin{proof}
Recall that $\lfloor\frac{n}r\rfloor\leq|V_i|
\leq\lceil\frac{n}r\rceil$
for each $i\in [r]$.
If $\lfloor\frac{n}r\rfloor=\lceil\frac{n}r\rceil$,
then we are done.
Now consider the case $\lceil\frac{n}r\rceil>\lfloor\frac{n}r\rfloor,$
and suppose to the contrary that $|V_i|=\lceil\frac{n}r\rceil$.
Then we can find some $j\in [r]$ such that
$|V_j|=\lfloor\frac{n}r\rfloor=|V_i|-1$.
Furthermore, we define $G'$ to be the graph obtained
from $G^*-E(H_i)$ by embedding a copy $H$ of $H_i$ into $V_j\setminus V(H_j).$
Let $\rho'=\rho(G')$ and
$\mathbf{y}=(y_1,y_2,\ldots,y_n)^T$ be the Perron vector of $G'$.
Clearly, $G'\in \mathrm{EX}\big(n,\mathbb{G}(F_1,\ldots,F_k)\big)$,
and thus $\rho\geq \rho'\geq \rho(T_{n,r})=\Theta(n)$.

Since $H_i\cong H$,
they have the same degree sequence.
Thus we can set
$\sum_{v\in V(H_i)}d_{H_i}(v)=\sum_{v\in V(H)}d_{H}(v)=a$
and $\sum_{v\in V(H_i)}d^2_{H_i}(v)=\sum_{v\in V(H)}d^2_{H}(v)=b$.
Choose $v^*\in V_i$ such that $x_{v^*}=\max_{v\in V_i}x_v.$
Then $x_{v^*}\leq\frac{x_{\overline{V_i}}}{\rho-k+1}.$
Moreover, $\sum_{uv\in E(H_i)}(x_u+x_v)=\sum_{v\in V(H_i)}d_{H_i}(v)x_v,$
and $\rho x_v\leq x_{\overline{V_i}}+d_{H_i}(v)x_{v^*}$
for each $v\in V(H_i).$
Hence,
\begin{align}\label{eq5.4}
\sum_{uv\in E(H_i)}\rho(x_u+x_v)
&\leq \sum_{v\in V(H_i)}d_{H_i}(v)x_{\overline{V_i}}+\sum_{v\in V(H_i)}d^2_{H_i}(v)x_{v^*}\nonumber\\
&= ax_{\overline{V_i}}+bx_{v^*}
\leq\Big(a+\frac{b}{\rho-k+1}\Big)x_{\overline{V_i}}.
\end{align}
Choose an arbitrary $v_0\in V_j\setminus\big(V(H)\cup V(H_j)\big).$
Then $N_{G'}(v_0)=\overline{V_j}$,
and thus $\rho y_{v_0}\geq\rho'y_{v_0}=y_{\overline{V_j}}.$
A similar argument gives
\begin{align}\label{eq5.5}
\!\!\sum_{uv\in E(H)}\!\!\rho'(y_u+y_v)
\geq\!\!\sum_{v\in V(H)}\!\!d_{H}(v)y_{\overline{V_j}}
+\!\!\sum_{v\in V(H)}\!\!d^2_{H}(v)y_{v_0}
=ay_{\overline{V_j}}+by_{v_0}
\geq\Big(a+\frac{b}{\rho}\Big)y_{\overline{V_j}}.
\end{align}

On the other hand, since $V(H)\subseteq V_j\setminus V(H_j)$,
we see that $N_{G^*}(v)=\overline{V_j}$,
and thus $\rho x_v=x_{\overline{V_j}}$, for each $v\in V(H)$.
Similarly, we have $\rho'y_v=y_{\overline{V_i}}$
for each $v\in V(H_i)$.

Now by a double-eigenvector technique
which was first introduced by Rowlinson (see \cite{Rowlinson-1988}),
we can obtain that
\begin{equation}\label{eq5.6}
\mathbf{x}^T\mathbf{y}(\rho'-\rho)=\mathbf{x}^T\Big(A(G')-A(G^*)\Big)\mathbf{y}
=\!\!\!\!\sum_{uv\in E(G')}\!\!\!\!(x_uy_v+x_vy_u)
-\!\!\!\!\sum_{uv\in E(G^*)}\!\!\!\!(x_uy_v+x_vy_u).
\end{equation}
Multiplying both sides of (\ref{eq5.6}) by $\rho\rho'$,
we obtain that
\begin{align}\label{eq5.7}
\mathbf{x}^T\mathbf{y}\rho\rho'(\rho'-\rho)
&=\sum_{uv\in E(H)}\!\!\rho\rho'(x_uy_v+x_vy_u)
-\!\!\sum_{uv\in E(H_i)}\!\!\rho\rho'(x_uy_v+x_vy_u)\nonumber\\
&=\!\!\sum_{uv\in E(H)}\!\!\rho'(y_u+y_v)x_{\overline{V_j}}
-\!\!\sum_{uv\in E(H_i)}\!\!\rho(x_u+x_v)y_{\overline{V_i}}.
\end{align}
Furthermore,
combining (\ref{eq5.4}), (\ref{eq5.5}) and (\ref{eq5.7}),
we get that
\begin{align}\label{eq5.8}
\mathbf{x}^T\mathbf{y}\rho\rho'(\rho'-\rho)
\geq\Big(a+\frac{b}{\rho}\Big)y_{\overline{V_j}}x_{\overline{V_j}}
-\Big(a+\frac{b}{\rho-k+1}\Big)x_{\overline{V_i}}y_{\overline{V_i}}.
\end{align}
Notice that $2e(H_i)=\sum_{v\in V(H_i)}d_{H_i}(v)=a$
and $\rho\geq \rho'=\Theta(n).$
In view of (\ref{eq3.1}), we have
$x_{\overline{V_j}}y_{\overline{V_j}}
\geq\frac{\rho x_V}{\rho+|V_j|+\frac12}\frac{\rho'y_V}{\rho'+|V_j|+\frac12}$
and
$x_{\overline{V_i}}y_{\overline{V_i}}
\leq\frac{\rho x_V}{\rho+|V_i|}\frac{\rho'y_V}{\rho'+|V_i|}.$
Recall that $|V_j|=|V_i|-1$.
Combining (\ref{eq5.8}),
we obtain that
\begin{align*}
\mathbf{x}^T\mathbf{y}(\rho'-\rho)\geq\Big(\frac{c_1}{d_1}-\frac{c_2}{d_2}\Big)x_Vy_V,
\end{align*}
where $c_1=a+\frac{b}{\rho}$,
$c_2=a+\frac{b}{\rho-k+1}$,
$d_1=(\rho+|V_i|-\frac12)(\rho'+|V_i|-\frac12)$
and $d_2=(\rho+|V_i|)(\rho'+|V_i|)$.
It is easy to check that $c_2-c_1=\frac1{\Theta(n^2)}$
and $\frac1{d_1}-\frac1{d_2}=\frac1{\Theta(n^3)}$.
Hence,
\begin{align*}
\frac{c_1}{d_1}
=\Big(c_2-\frac1{\Theta(n^2)}\Big)\Big(\frac1{d_2}+\frac1{\Theta(n^3)}\Big)
=\frac{c_2}{d_2}+\frac1{\Theta(n^3)}-\frac1{\Theta(n^4)}-\frac1{\Theta(n^5)},
\end{align*}
which implies that $\frac{c_1}{d_1}>\frac{c_2}{d_2}$,
and thus $\rho'>\rho$, a contradiction.
The claim holds.
\end{proof}

\begin{claim}\label{cl5.3}
If $V(H_i)\neq\varnothing$, then $H_i\cong K_3$ for $e(H_i)=3$
and $H_i\cong S_{e(H_i)+1}$ otherwise.
\end{claim}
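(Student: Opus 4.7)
The plan is to combine Lemma \ref{lem5.3} with a local spectral switching. Set $m=e(H_i)\geq 1$. Lemma \ref{lem5.3} gives $\sum_{v\in V(H_i)} d_{H_i}^2(v)\leq m^2+m$, with equality holding precisely when every two edges of $H_i$ share a common endpoint---equivalently, $H_i\cong S_{m+1}$, or (only possible when $m=3$) $H_i\cong K_3$. The overall strategy is to first show that extremality forces $H_i$ to attain this bound, and then for $m=3$ to separately distinguish $K_3$ from $S_4$.

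For the first part I argue by contradiction, assuming $\sum_v d_{H_i}^2(v)\leq m^2+m-1$. By Claim \ref{cl5.2}, $|V_i|=\lfloor n/r\rfloor$, so I can pick $m+1$ vertices of $V_i\setminus V(H_i)$ that are isolated in $G^*[V_i]$. Let $G'$ be obtained from $G^*-E(H_i)$ by planting a fresh $S_{m+1}$ on these vertices; by Lemma \ref{lem4.2}, $G'$ is still $\mathbb{G}(F_1,\ldots,F_k)$-free. Writing $\xi:=x_{\overline{V_i}}/\rho$, Claim \ref{cl5.1} gives $\xi=\Theta(1/\sqrt n)$ and $\rho=\Theta(n)$, while iterating the eigenequation $\rho x_v=x_{\overline{V_i}}+\sum_{u\in N_{H_i}(v)}x_u$ yields the expansion $x_v=\xi+\xi d_{H_i}(v)/\rho+O(\xi/\rho^2)$ for $v\in V_i$. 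Feeding this into the Rayleigh quotient bound
\begin{equation*}
\rho(G')-\rho\ \geq\ \mathbf{x}^T\bigl(A(G')-A(G^*)\bigr)\mathbf{x}\ =\ 2\!\!\sum_{uv\in E(S_{m+1})}\!\!x_u x_v\ -\ 2\!\!\sum_{uv\in E(H_i)}\!\!x_u x_v,
\end{equation*}
the leading $2m\xi^2$ terms cancel, and the next-order contribution is $\tfrac{2\xi^2}{\rho}\bigl((m^2+m)-\sum_v d_{H_i}^2(v)\bigr)=\Theta(1/n^2)$, which dominates the $O(1/n^3)$ remainder. Hence $\rho(G')>\rho$, contradicting $G^*\in\mathrm{SPEX}(n,\mathbb{G}(F_1,\ldots,F_k))$.

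For $m\neq 3$ this already yields $H_i\cong S_{m+1}$. When $m=3$, I would run the same switching with $H_i'=K_3$ under the assumption $H_i\cong S_4$. Since $\sum d^2=12$ for both $S_4$ and $K_3$, the previous leading-order improvement vanishes and one must carry the eigenvector expansion one order further. The decisive invariant at this finer level is $\sum_{uv\in E(H_i)}d_{H_i}(u)d_{H_i}(v)$, equal to $12$ for $K_3$ and $9$ for $S_4$; this produces a positive gap of order $\xi^2/\rho^2=\Theta(1/n^3)$ in the Rayleigh quotient comparison, again giving $\rho(G')>\rho$ and a contradiction.

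The main obstacle is precisely this second step: $K_3$ and $S_4$ tie at leading order, so the comparison depends on carefully controlling the $O(\xi/\rho^2)$ error terms in the eigenvector expansion---including the perturbation of $\mathbf{x}$ at vertices of $V(H_i)$ and the induced shift of $x_{\overline{V_i}}$ coming from Claim \ref{cl5.1}---to make sure the $\sum d(u)d(v)$ advantage of $K_3$ is not cancelled by incidental corrections from neighbouring partite sets.
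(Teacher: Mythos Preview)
Your first step contains a genuine error. In the Rayleigh bound
\[
\rho(G')-\rho\ \ge\ \mathbf{x}^T\bigl(A(G')-A(G^*)\bigr)\mathbf{x}
= 2\!\!\sum_{uv\in E(S_{m+1})}\!\! x_u x_v\ -\ 2\!\!\sum_{uv\in E(H_i)}\!\! x_u x_v,
\]
the vector $\mathbf{x}$ is the Perron vector of $G^*$, not of $G'$. Since you place the new star on vertices of $V_i\setminus V(H_i)$, each of these is isolated in $G^*[V_i]$ and therefore has $x_v=\xi$ \emph{exactly}. Hence the first sum equals $2m\xi^2$ with no higher-order term at all. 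By your own expansion the second sum equals $2m\xi^2+\tfrac{2\xi^2}{\rho}\sum_v d_{H_i}^2(v)+O(\xi^2/\rho^2)$, so the right-hand side is
\[
-\frac{2\xi^2}{\rho}\sum_v d_{H_i}^2(v)+O\!\left(\frac{\xi^2}{\rho^2}\right)<0,
\]
and the Rayleigh quotient gives nothing. The ``$(m^2+m)$'' you wrote could only appear if you evaluated the star contribution with the Perron vector $\mathbf{y}$ of $G'$; doing that is precisely the double-eigenvector (Rowlinson) identity $\mathbf{x}^T\mathbf{y}(\rho'-\rho)=\mathbf{x}^T(A(G')-A(G^*))\mathbf{y}$, which you have not invoked. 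The same defect kills your $m=3$ step: a fresh triangle also sits on vertices with $x_v=\xi$, so your $\sum d(u)d(v)$ comparison never enters the one-sided Rayleigh bound.

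The paper avoids this by never planting the competing structure on low-weight vertices alone. It first uses Lemma~\ref{lem5.2} to show that the vertex $v^*$ with $x_{v^*}=\max_{v\in V_i}x_v$ must be dominating in $H_i$ (one edge switch $uv\to uv^*$ strictly raises $\rho$ otherwise). For $e(H_i)\ge 4$ it then builds $G'$ by deleting the $b$ edges of $H_i-v^*$ and adding $b$ edges from $v^*$ to fresh leaves, so the new star is centred at the high-weight vertex $v^*$; the comparison is then carried out via the double-eigenvector identity together with Lemma~\ref{lem5.3}. The $S_4$ versus $K_3$ case is handled by the same two-eigenvector trick with an explicit computation of both $x$- and $y$-coordinates on the four vertices. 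Your outline can be repaired along either of these lines, but as written the single-eigenvector Rayleigh argument on fresh vertices cannot succeed.
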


\begin{proof}
Choose $v^*\in V_i$ such that $x_{v^*}=\max_{v\in V_i}x_v$.
Then $v^*\in V(H_i)$.
We can further see that
$v^*$ is a dominating vertex of $H_i$.
Otherwise, there exists a vertex $u\in V(H_i)\setminus N(v^*)$.
Since $H_i$ is an edge-induced subgraph of $G^*$,
we can find an edge $uv\in E(H_i)$.
Now we define $G'=G^*-\{uv\}+\{uv^*\}.$
Then $G'\in \mathrm{EX}\big(n,\mathbb{G}(F_1,\ldots,F_k)\big)$.
However, $\rho(G')>\rho(G^*)$ by Lemma \ref{lem5.2},
a contradiction.

Since $v^*$ is a dominating vertex,
if $1\leq e(H_i)\leq 2$, then $H_i$ is a star,
and we are done.
Furthermore, if $e(H_i)=3$, then $H_i$ is either a triangle
or a copy of $S_4$.
We now show $H_i\cong K_3$.
Suppose to contrary that $H_i\cong S_4$.
We may assume that $E(H_i)=\{v^*v_j: j\in [3]\}$.
Define $G'=G^*-\{v_3v^*\}+\{v_1v_2\}$.
Then $G'\in \mathrm{EX}\big(n,\mathbb{G}(F_1,\ldots,F_k)\big)$.
Let $\rho'=\rho(G')$ and
$\mathbf{y}=(y_1,y_2,\ldots,y_n)^T$ be the Perron vector of $G'$.
Since $G^*\in \mathrm{SPEX}\big(n,\mathbb{G}(F_1,\ldots,F_k)\big)$,
it is easy to see that $\rho'\leq \rho$.

On the one hand,
we have $x_{v_1}=x_{v_2}=x_{v_3}$ by symmetry.
Moreover, $\rho x_{v^*}=x_{\overline{V}_i}+3x_{v_1}$
and $\rho x_{v_1}=x_{\overline{V}_i}+x_{v^*}$.
Hence, we can deduce that
$\rho x_{v^*}-\rho x_{v_1}=3x_{v_1}-x_{v^*}$,
which implies that $x_{v^*}=\frac{\rho+3}{\rho+1}x_{v_1}.$
On the other hand,
since $V(H_i)$ induces a copy of $K_3\cup K_1$ in $G'$,
we can see that $y_{v^*}=y_{v_1}=y_{v_2}$.
Moreover, $\rho' y_{v^*}=y_{\overline{V_i}}+2y_{v^*}$
and $\rho' y_{v_3}=y_{\overline{V_i}}$.
Hence,
$\rho'y_{v^*}-\rho'y_{v_3}=2y_{v^*}$,
which yields that
$y_{v_3}=\frac{\rho'-2}{\rho'}y_{v^*}
\leq\frac{\rho-2}{\rho}y_{v_1}.$

In view of (\ref{eq5.6}), we can see that
\begin{equation*}
\mathbf{x}^T\mathbf{y}(\rho'-\rho)
=(x_{v_1}y_{v_2}+x_{v_2}y_{v_1})-
(x_{v_3}y_{v^*}+x_{v^*}y_{v_3})
=x_{v_1}y_{v_1}-x_{v^*}y_{v_3}.
\end{equation*}
Since $x_{v^*}y_{v_3}\leq
\frac{\rho+3}{\rho+1}x_{v_1}
\frac{\rho-2}{\rho}y_{v_1}<x_{v_1}y_{v_1},$
we have $\rho'>\rho$, a contradiction.
Therefore, $H_i\cong K_3$,
and the case $e(H_i)=3$ is proved.

Next it remains the case that $e(H_i)\geq4$,
which implies that $d_{H_i}(v^*)\geq3$.
Write $a=d_{H_i}(v^*)$ and $b=e(H_i')$, where $H_i'=H_i-\{v^*\}.$
If $b=0$, then $H_i$ is a star, and we are done.
Now, suppose that $b\geq1$.
Let $G'$ be the graph obtained from $G^*$
by deleting $b$ edges in $H_i'$
and adding $b$ edges from $v^*$ to $w_1,w_2,\ldots,w_b\in V_i\setminus V(H_i)$.
It is easy to see that $G'\in \mathrm{EX}\big(n,\mathbb{G}(F_1,\ldots,F_k)\big)$,
and thus $\rho(G')\leq\rho(G^*)$.
Let $\rho'=\rho(G')$ and
$\mathbf{y}=(y_1,y_2,\ldots,y_n)^T$ be the Perron vector of $G'$.

Observe that $V(H_i)\cup\{w_j: j\in[b]\}$
induces a copy $H_i''$ of $S_{a+b+1}$ in $G'$.
By symmetry, we have $y_{v}=y_{w_1}$
for every vertex $v\in V(H_i'')\setminus \{v^*\}$.
Thus,
$\rho'y_{v^*}=y_{\overline{V_i}}+(a+b)y_{w_1}$
and $\rho'y_{w_1}=y_{\overline{V_i}}+y_{v^*}$,
which give
$y_{v^*}=\frac{\rho'+a+b}{\rho'+1}y_{w_1}
\geq\frac{\rho+a+b}{\rho+1}y_{w_1}.$
In view of (\ref{eq5.6}),
we have
\begin{align}\label{eq5.9}
\mathbf{x}^T\mathbf{y}(\rho'-\rho)
&=\sum_{1\leq j\leq b}(x_{v^*}y_{w_j}+x_{w_j}y_{v^*})
-\!\!\sum_{uv\in E(H_i')}\!\!(x_uy_v+x_vy_u) \nonumber\\
&\geq b\Big(x_{v^*}+\frac{\rho+a+b}{\rho+1}x_{w_1}\Big)y_{w_1}
-\!\!\sum_{uv\in E(H_i')}\!\!(x_u+x_v)y_{w_1}.
\end{align}

Clearly,
$\sum_{uv\in E(H_i')}(x_u+x_v)=\sum_{v\in V(H_i')}d_{H_i'}(v)x_v.$
Moreover, $\rho x_v\leq x_{\overline{V}_i}+(d_{H_i'}(v)+1)x_{v^*}$
for each $v\in V(H_i').$
Recall that $e(H_i')=b$.
Thus, $\sum_{v\in V(H_i')}d^2_{H_i'}(v)\leq b^2+b$ by Lemma \ref{lem5.3}.
Consequently,
\begin{align}\label{eq5.10}
\sum_{uv\in E(H_i')}\rho(x_u+x_v)
&\leq \sum_{v\in V(H_i')}d_{H_i'}(v)x_{\overline{V_i}}+\sum_{v\in V(H_i')}\big(d^2_{H_i'}(v)+d_{H_i'}(v)\big)x_{v^*}\nonumber\\
&\leq 2bx_{\overline{V_i}}+(b^2+3b)x_{v^*}.
\end{align}

On the other hand,
since $d_{H_i}(v^*)=a$ and $N_{G^*}(w_1)=\overline{V_i}\subseteq
N_{G^*}(v)$ for each $v\in V(H_i)$,
we have $\rho x_{w_1}=x_{\overline{V_i}}$
and $\rho x_{v^*}\geq x_{\overline{V_i}}+ax_{w_1}.$
Furthermore,
$\frac{\rho+a+b}{\rho+1}\rho x_{w_1}
\geq(\rho+a+b-2)x_{w_1}=x_{\overline{V_i}}+(a+b-2)x_{w_1}.$
Combining with (\ref{eq5.9}) and (\ref{eq5.10}),
we obtain that
\begin{align}\label{eq5.11}
\mathbf{x}^T\mathbf{y}(\rho'-\rho)\frac{\rho}{y_{w_1}}
&\geq b\Big(\rho x_{v^*}+\frac{\rho+a+b}{\rho+1}\rho x_{w_1}\Big)
-\!\!\sum_{uv\in E(H_i')}\!\!\rho(x_u+x_v)\nonumber\\
&\geq b(2a+b-2)x_{w_1}-(b^2+3b)x_{v^*}.
\end{align}
Recall that $x_{w_1}=\frac{x_{\overline{V_i}}}{\rho}$.
As in the proof of Claim \ref{cl5.1},
we know that $x_{v^*}\leq\frac{x_{\overline{V_i}}}{\rho-k+1}$.
Moreover, since $a\geq3$ and $b\geq1$, we have
$b(2a+b-2)-(b^2+3b)=2ab-5b\geq1.$
Combining (\ref{eq5.11}),
we can deduce that $(\rho'-\rho)\frac{\rho}{y_{w_1}}>0$,
a contradiction.
Therefore, $H_i$ is a star.
\end{proof}

\begin{claim}\label{cl5.4}
There exists exactly one $i\in [r]$
such that $V(H_i)\neq\varnothing$.
\end{claim}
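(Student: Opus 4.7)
The plan is to argue by contradiction via a consolidation operation in the spirit of Claims \ref{cl5.2} and \ref{cl5.3}. Suppose two distinct parts, say $V_i$ and $V_j$, both carry non-empty $H_i$ and $H_j$. By Claim \ref{cl5.2}, $|V_i|=|V_j|=\lfloor n/r\rfloor$, and by Claim \ref{cl5.3} each of $H_i, H_j$ is a star or a triangle with a dominating vertex $u_i^{*}, u_j^{*}$. Setting $s_j=e(H_j)\geq 1$, I would construct $G'$ from $G^{*}$ by deleting $E(H_j)$ and inserting $s_j$ new edges $u_i^{*}z_1,\dots,u_i^{*}z_{s_j}$, where $z_1,\dots,z_{s_j}$ are distinct vertices in $V_i\setminus V(H_i)$ (available since $|V_i|-|V(H_i)|\geq\lfloor n/r\rfloor-k$ is large). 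Then $e(G')=e(G^{*})$ and every extra edge of $G'$ lies in $V_i$, so Lemma \ref{lem4.2} gives $G'\in \mathrm{EX}\bigl(n,\mathbb{G}(F_1,\ldots,F_k)\bigr)$.

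To contradict $G^{*}\in \mathrm{SPEX}\bigl(n,\mathbb{G}(F_1,\ldots,F_k)\bigr)$ I would show $\rho(G')>\rho(G^{*})$ via the double eigenvector technique from Claim \ref{cl5.2}. Let $\mathbf{y}$ be the Perron vector of $G'$ and $\rho'=\rho(G')$; then
\begin{equation*}
(\rho'-\rho)\mathbf{x}^{T}\mathbf{y}
=\sum_{\ell=1}^{s_j}\bigl(x_{u_i^{*}}y_{z_\ell}+x_{z_\ell}y_{u_i^{*}}\bigr)
-\sum_{uv\in E(H_j)}\bigl(x_uy_v+x_vy_u\bigr).
\end{equation*}
Each entry is read off the local eigenvector equations: a typical vertex $v$ in a part $V_t$ satisfies $x_v=x_{\overline{V_t}}/\rho$ (resp.\ $y_v=y_{\overline{V_t}}/\rho'$), while centers of stars/triangles are ``boosted'' by factors of the form $\frac{\rho+s}{\rho^{2}-s}$ derived exactly as in Claim \ref{cl5.1}. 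Because $e(G')=e(G^{*})$, the first-order Rayleigh contributions cancel; Claim \ref{cl5.1}, applied on \emph{both} $G^{*}$ and $G'$, controls the asymmetries $|x_{\overline{V_i}}-x_{\overline{V_j}}|$ and $|y_{\overline{V_i}}-y_{\overline{V_j}}|$ by $O(\rho x_V/n^{3})$, making the cross-difference terms negligible. The genuine positive contribution, of order $s_j\, x_V y_V/n^{3}$, comes from the fact that in $G'$ the dominating vertex $u_i^{*}$ has $s_i+s_j$ neighbors inside $V_i$ versus $s_i$ in $G^{*}$, boosting $y_{u_i^{*}}$ beyond anything available in $G^{*}$. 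This ``concentration gain'' strictly dominates the sub-leading terms, yielding $\mathbf{x}^{T}(A(G')-A(G^{*}))\mathbf{y}>0$, hence $\rho'>\rho$, the desired contradiction.

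The main obstacle is precisely this asymptotic bookkeeping. Preservation of the edge count forces the change in the Rayleigh quotient to be a second-order effect in $1/n$, so one must simultaneously track the Perron vectors of $G^{*}$ and $G'$ and invoke Claim \ref{cl5.1} on both sides to handle the two small asymmetries, in a manner analogous to the comparison $\tfrac{c_1}{d_1}>\tfrac{c_2}{d_2}$ that concludes the proof of Claim \ref{cl5.2}. There is also a minor case split according to whether $H_i$ or $H_j$ is a star or a triangle; the triangle case (only possible when the corresponding $s=3$) introduces the extra edge $w_1w_2$, whose contribution is absorbed using the triangle symmetry $x_{u_j^{*}}=x_{w_1}=x_{w_2}$ that follows from the local eigenvector equation, after which the comparison closes as in the star case.
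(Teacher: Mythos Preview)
Your plan is sound and runs parallel to the paper's argument: both proceed by contradiction, construct a comparison graph $G'\in\mathrm{EX}\bigl(n,\mathbb{G}(F_1,\ldots,F_k)\bigr)$ that consolidates the extra edges into a single part, and then show $\rho(G')>\rho(G^{*})$ via Rowlinson's double-eigenvector identity together with the asymptotic control from Claim~\ref{cl5.1} applied to both $G^{*}$ and $G'$. The positive ``concentration'' term of order $x_Vy_V/n^{3}$ beats the cross-difference terms of order $x_Vy_V/n^{4}$, exactly as you describe.

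The one genuine difference is in the choice of $G'$. You keep $H_i$ in place and graft $s_j$ fresh leaves onto the existing dominating vertex $u_i^{*}$. The paper instead deletes \emph{both} $E(H_1)$ and $E(H_2)$ and plants a brand-new star $S_{a_1+a_2+1}$ in $V_1\setminus V(H_1)$, after first fixing the labelling so that $x_{\overline{V_1}}\geq x_{\overline{V_2}}$. This buys two simplifications: (i) the new star's contribution is computed exactly (with $\sum d^{2}=a^{2}+a$), so one never needs the star/triangle dichotomy of Claim~\ref{cl5.3} and there is no case split; the deleted side is handled uniformly via the crude bound $\sum_{v}d_{H_i}^{2}(v)\leq a_i^{2}+a_i$ of Lemma~\ref{lem5.3}; and (ii) the assumption $x_{\overline{V_1}}\geq x_{\overline{V_2}}$ disposes of the $x$-asymmetry outright, leaving only the $y$-asymmetry to be bounded by Claim~\ref{cl5.1} on $G'$. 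Your route is a bit more direct conceptually (modify only one part), at the price of the triangle sub-case and of tracking both asymmetries; either way the dominant gain is $\Theta(a_1a_2\,x_Vy_V/n^{3})$ and the conclusion follows.
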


\begin{proof}
Suppose to the contrary that there exist at
least two partite sets, say $V_1$ and $V_2$,
such that both $V(H_1)$ and $V(H_2)$ are non-empty.
Then by Claim \ref{cl5.2},
we have $|V_1|=|V_2|=\lfloor\frac{n}{r}\rfloor.$
We may assume without loss of generality that
$x_{\overline{V_1}}\geq x_{\overline{V_2}}$.
Now let $e(H_i)=a_i$ for $i\in [2]$, and let $a=a_1+a_2$.
Then, define
$G'$ to be the graph obtained from
$G^*-\cup_{i=1}^2E(H_i)$ by embedding a copy $H$ of $S_{a+1}$
in $V_1\setminus V(H_1).$
Let $\rho'=\rho(G')$ and
$Y=(y_1,y_2,\ldots,y_n)^T$ be the Perron vector of $G'$.
Clearly, $\rho\geq\rho'\geq\rho(T_{n,r})=\Theta(n)$.

By Lemma \ref{lem5.3},
we have $\sum_{v\in V(H_i)}d^2_{H_i}(v)\leq a_i^2+a_i.$
Recall that
$x_{v_i^*}=\max_{v\in V_i}x_v\leq\frac{x_{\overline{V_i}}}{\rho-k+1}.$
In view of (\ref{eq5.4}), we have
\begin{align}\label{eq5.12}
\!\!\!\sum_{uv\in E(H_i)}\!\!\!\rho(x_u+x_v)
\leq\!\!\!\sum_{v\in V(H_i)}\!\!\!d_{H_i}(v)x_{\overline{V_i}}
+\!\!\!\sum_{v\in V(H_i)}\!\!\!d^2_{H_i}(v)x_{v_i^*}
\leq 2a_ix_{\overline{V_i}}+(a_i^2+a_i)\frac{x_{\overline{V_i}}}{\rho-k+1}.
\end{align}
Moreover, since $V(H_i)$ is an independent set in $G'$ for $i\in [2]$,
we have $N_{G'}(v)=\overline{V_i}$,
and thus $\rho'y_v=y_{\overline{V_i}}$, for each $v\in V(H_i)$.
Combining (\ref{eq5.12}), we obtain
\begin{align}\label{eq5.13}
\!\!\!\!\sum_{uv\in E(H_i)}\!\!\!\!\rho\rho'(x_uy_v+x_vy_u)
=\!\!\!\!\sum_{uv\in E(H_i)}\!\!\!\!\rho(x_u+x_v)y_{\overline{V_i}}
\leq \Big(2a_i+\frac{a^2_i+a_i}{\rho-k+1}\Big)
x_{\overline{V}_i}y_{\overline{V_i}}.
\end{align}

Now calculate $x_v$ and $y_v$ for each $v\in V(H)$.
Recall that $V(H)\subseteq V_1\setminus V(H_1).$
Then $N_{G^*}(v)=\overline{V_1}$,
and thus $\rho x_v=x_{\overline{V_1}}$, for each $v\in V(H)$.
Moreover, similar as (\ref{eq5.5}), we have
\begin{align*}
\!\!\!\sum_{uv\in E(H)}\!\!\!\rho'(y_u+y_v)
\geq\!\!\!\sum_{v\in V(H)}\!\!\!d_{H}(v)y_{\overline{V_1}}
+\!\!\!\sum_{v\in V(H)}\!\!\!d^2_{H}(v)\min_{v\in V_1}y_v
=2ay_{\overline{V_1}}+(a^2+a)\frac{y_{\overline{V_1}}}{\rho'},
\end{align*}
where the equality follows from $H\cong S_{a+1}$.
Note that $\rho\geq\rho'$. It follows that
\begin{align}\label{eq5.14}
\!\!\!\!\sum_{uv\in E(H)}\!\!\!\!\rho\rho'(x_uy_v+x_vy_u)
=\!\!\!\!\sum_{uv\in E(H)}\!\!\!\!\rho'(y_u+y_v)x_{\overline{V_1}}
\geq \Big(2a+\frac{a^2+a}{\rho}\Big)
y_{\overline{V_1}}x_{\overline{V_1}}.
\end{align}
Multiplying both sides of (\ref{eq5.6}) by $\rho\rho'$
and combining (\ref{eq5.13}-\ref{eq5.14}),
we obtain that
\begin{align*}
\mathbf{x}^T\mathbf{y}\rho\rho'(\rho'-\rho)
&=\sum_{uv\in E(H)}\!\!\!\rho\rho'(x_uy_v+x_vy_u)
-\!\!\!\!\sum_{uv\in E(H_1)\cup E(H_2)}
\!\!\!\!\rho\rho'(x_uy_v+x_vy_u)\nonumber\\
&\geq\Big(2a+\frac{a^2+a}{\rho}\Big)
x_{\overline{V_1}}y_{\overline{V_1}}
-\sum_{i=1}^2\Big(2a_i+\frac{a^2_i+a_i}{\rho-k+1}\Big)
x_{\overline{V}_i}y_{\overline{V_i}}\nonumber\\
&= \Big(2a_2+\frac{a^2+a}{\rho}-\frac{a^2_1+a_1}{\rho-k+1}\Big)
x_{\overline{V_1}}y_{\overline{V_1}}
-\Big(2a_2+\frac{a^2_2+a_2}{\rho-k+1}\Big)
x_{\overline{V_2}}y_{\overline{V_2}}.
\end{align*}

Recall that $a=a_1+a_2$.
Set $c_1=2a_2+\frac{a^2+a}{\rho}-\frac{a^2_1+a_1}{\rho-k+1}$
and $c_2=2a_2+\frac{a^2_2+a_2}{\rho-k+1}.$
One can easily check that
$c_1>c_2$ and $c_1-c_2=\frac1{\Theta(n)}$.
Moreover, by Claim \ref{cl5.1}
we have
$y_{\overline{V_2}}=\frac1{\Theta(n)}\rho'y_V$
and $y_{\overline{V_2}}-y_{\overline{V_1}}\leq
\frac1{\Theta(n^3)}\rho'y_V$.
Now by the above inequality and the assumption that
$x_{\overline{V_1}}\geq x_{\overline{V_2}},$
we deduce that
\begin{align*}
\mathbf{x}^T\mathbf{y}\rho\rho'(\rho'-\rho) &\geq
\Big(c_2+\frac1{\Theta(n)}\Big)\Big(y_{\overline{V_2}}
-\frac{\rho'y_V}{\Theta(n^3)}\Big)x_{\overline{V_2}}
-c_2x_{\overline{V_2}}y_{\overline{V_2}}\\
&=\Big(\frac1{\Theta(n^2)}-\frac1{\Theta(n^3)}
-\frac1{\Theta(n^4)}\Big)
\rho'y_V x_{\overline{V_2}}.
\end{align*}
It follows that $\rho'>\rho$, a contradiction.
Therefore, the claims holds.
\end{proof}

Combining Claims \ref{cl5.2}, \ref{cl5.3} and \ref{cl5.4},
we can see that $G^*$ is obtained
from $T_{n,r}$ by embedding a subgraph $H$ of size $k-1$ in
some $V_i$ with $|V_i|=\lfloor\frac nr\rfloor.$
Moreover, $H$ is a triangle for $k=4$ and $H$ is a star otherwise.
This completes the proof of Theorem \ref{thm1.5}.
\end{proof}

\section{Concluding remarks}

In 1974, Simonovits gave a result
(see Theorem 1.a., \cite{Simonovits-1974}), which implies that:
Given a finite graph family  $\mathcal{H}$ with $\chi(\mathcal{H})=r+1$,
if $\mathcal{M}(\mathcal{H})$ contains a linear forest,
then there exist integers $k=k(\mathcal{H})$,
$n_0=n(\mathcal{H})$ and a graph $G\in {\rm EX}(n,\mathcal{H})$ such that
after deleting at most $k$ vertices of $G$,
the remaining graph contains a complete $r$-partite spanning subgraph $K_r(n_1,n_2,\dots,n_r)$ for $n\geq n_0$,
where $|n_i-\frac{n}{r}|\leq k$ for all $i\in [r]$.

In this paper,
we used the decomposition family to characterize which graph families $\mathcal{H}$ satisfy $\chi(\mathcal{H})=r+1\geq 3$ and ${\rm ex}(n,\mathcal{H})<e(T_{n,r})+\lfloor \frac{n}{2r} \rfloor$ (see Theorem \ref{thm1.2}),
where the third sufficient and necessary condition states that
every graph in ${\rm EX}(n,\mathcal{H})$
is obtained from $T_{n,r}$ by adding and deleting $O(1)$ edges.
It may be interesting to investigate
whether Theorem \ref{thm1.2} can be strengthened. A natural first thought is that perhaps one does not need to remove edges. That is, perhaps extremal graphs are all obtained by adding edges to $T_{n,r}$. This is the case for many kinds of forbidden subgraphs
(see $K_{r+1}$, color-critical graphs \cite{Simonovits-1968},
friendship graphs \cite{Erdos-1995},
intersecting cliques \cite{Chen-2003},
and intersecting odd cycles \cite{Hou-2016,Yuan-2018}). However, the following example shows that this is not true in general.

\begin{example}
There exists a finite family $\mathcal{H}$ with $\chi(\mathcal{H})=3$ and $\mathrm{ex}(n, \mathcal{H}) = e(T_{n,2})  + \Theta(1)$ such that there are no graphs in $\mathrm{EX}(n, \mathcal{H})$ containing $T_{n,2}$ as a subgraph.
\end{example}

\begin{proof}
Let $s\geq 3$ and let $H=\{H_1, H_2, H_3\}$, where $H_1$ is obtained from
$K_{2s,2s}$ by embedding two isolated edges in one partite set,
$H_2$ is obtained from $K_{2s,2s}$ by embedding $K_{1,s+2}$ in one partite set,
and $H_3$ is obtained from $K_{2s,2s}$ by embedding a copy of $K_{1,s}$ in each partite set.
The chromatic number of $H_i$ is 3 for $i=1,2,3$. Note that a matching on $2$ edges and $K_{1,s+2}$ are both in $\mathcal{M}(\mathcal{H})$, and hence by Theorem \ref{thm1.2} we have that $\mathrm{e}x(n, \mathcal{H}) = e(T_{n,2}) + O(1)$. 

Assume that $G$ is an $\mathcal{H}$-free graph on $n$ vertices where $n$ is sufficiently large, and assume that $T_{n,2}$ is a subgraph of $G$. Then because $G$ is $H_1$ and $H_2$ free, each partite set may induce at most $s+1$ edges with equality if and only if they each induce a star. Furthermore, since $G$ is $H_3$-free, both partite sets may not induce stars with at least $s$ edges, and therefore we have $e(G) \leq e(T_{n,2}) + 2s$ with equality if and only if one part induces $K_{1,s-1}$ and one part induces $K_{1,s+1}$. 

On the other hand, the graph $G'$ obtained by removing one edge from $T_{n,2}$ and embedding a star on $s+1$ vertices into each part where the two centers are the endpoints of the removed edge is an $\mathcal{H}$-free graph with $e(G') = e(T_{n,r}) + 2s+1$. 
\end{proof}



Many scholars are interested in graph families
$\mathcal{H}$ with
${\rm SPEX}(n,\mathcal{H})\subseteq{\rm EX}(n,\mathcal{H})$.
Based on previous extremal results,
we present the following problems.

\begin{prob}
For $r\geq 2$ and $n$ sufficiently large,
if a finite graph family $\mathcal{H}$
satisfies that $e(G)=e(T_{n,r})+\Theta(n)$
and $T_{n,r}\subseteq G$ for every $G\in{\rm EX}(n,\mathcal{H})$,
determine whether or not we have
${\rm SPEX}(n,\mathcal{H})\subseteq{\rm EX}(n,\mathcal{H})$.
\end{prob}

\begin{prob}
Let $\mathcal{H}$ be a finite graph family
with $\chi(\mathcal{H})=r+1\geq 3$.
Determine the maximum constant $c$ such that for all
positive $\varepsilon<c$ and sufficiently large $n$,
${\rm ex}(n,\mathcal{H})\leq e(T_{n,r})+(c-\varepsilon)n$
implies ${\rm SPEX}(n,\mathcal{H})\subseteq{\rm EX}(n,\mathcal{H})$.
\end{prob}

By Theorem \ref{thm1.3}, we can see that $c\geq\frac{1}{2r}$.


\begin{thebibliography}{99}
\setlength{\itemsep}{0pt}


\bibitem {Byrne-arxiv-2024}
J. Byrne, D.N. Desai, M. Tait,
A general theorem in spectral extremal graph theory, arXiv:2401.07266v1.



\bibitem {Chen-2003}
G.T. Chen, R.J. Gould, F. Pfender, B. Wei, Extremal graphs for intersecting cliques,
\emph{J. Combin. Theory Ser. B}
\textbf{89} (2003), no. 2, 159--171.

\bibitem {CH-1976}
V. Chv\'{a}tal, D. Hanson, Degrees and matchings,
\emph{J. Combin. Theory Ser. B}
\textbf{20} (1976) 128--138.

\bibitem {Cioaba-2022}
S. Cioab\u{a}, D.N. Desai, M. Tait,
The spectral radius of graphs with no odd wheels,
\emph{European J. Combin.} \textbf{99} (2022),
Paper No. 103420, 19 pp.

\bibitem {Cioaba1}
S. Cioab\u{a}, D.N. Desai, M. Tait, The spectral even cycle problem,
arxiv:2205.00990.


\bibitem {Cioaba-2023}
S. Cioab\u{a}, D.N. Desai, M. Tait,
A spectral Erd\H{o}s-S\'{o}s theorem,
\emph{SIAM J. Discrete Math.} \textbf{37}
(2023), no. 3, 2228--2239.

\bibitem {Cioaba-2020}
S. Cioab\u{a}, L.H. Feng, M. Tait, X.-D. Zhang,
The maximum spectral radius of graphs without friendship subgraphs,
\emph{Electron. J. Combin.}
\textbf{27} (2020), no. 4, Paper No. 4.22, 19 pp.


\bibitem {Desai-2022}
D.N. Desai, L.Y. Kang, Y.T. Li, Z.Y. Ni, M. Tait, J. Wang,
Spectral extremal graphs for intersecting cliques,
\emph{Linear Algebra Appl.}
\textbf{644} (2022) 234--258.

\bibitem {TD}
T. Dzido, A. Jastrzebski, Tur\'{a}n numbers for odd wheels,
\emph{Discrete Math.} \textbf{341} (2018), no.4, 1150--1154.

\bibitem {Erdos-1968}
P. Erd\H{o}s, On some new inequalities concerning extremal properties of
graphs, In: Theory of Graphs (Proceedings of the Colloquium, Tihany, 1966),
Academic Press, New York, 1968, pp. 77--81.

\bibitem {Erdos-1967}
P. Erd\H{o}s,  Some recent results on extremal problems in graph theory (Results),
In: Theory of Graphs (International Symposium Rome, 1966), Gordon and
Breach, New York, Dunod, Paris, 1966, pp. 117--123.

\bibitem {Erdos-1962}
P. Erd\H{o}s, \"{U}ber ein Extremalproblem in der Graphentheorie (German),
\emph{Arch. Math. (Basel)} \textbf{13} (1962) 122--127.




\bibitem {Erdos-1995}
P. Erd\H{o}s, Z. F\"{u}redi, R.J. Gould, D.S. Gunderson,
Extremal graphs for intersecting triangles,
\emph{J. Combin. Theory Ser. B}
\textbf{64} (1995), no. 1, 89--100.

\bibitem {EG-1959}
P. Erd\H{o}s, T. Gallai, On maximal paths and circuits of graphs,
\emph{Acta Math. Acad. Sci. Hungar.} \textbf{10} (1959) 337--356.

\bibitem {Erdos-1966} P. Erd\H{o}s, M. Simonovits, A limit theorem in graph theory,
\emph{Studia Sci. Math. Hungar.} \textbf{1} (1966) 51--57.


\bibitem {FS1975} R.J. Faudree, R.H. Schelp,
Path Ramsey numbers in multicolorings,
\emph{J. Combin. Theory Ser. B}
\textbf{19} (1975), no. 2, 150--160.

\bibitem {FG2015} Z. F\"{u}redi, D.S. Gunderson,
Extremal numbers for odd cycles,
\emph{Combin. Probab. Comput.}
\textbf{24} (2015), no. 4, 641--645.

\bibitem {FS2013}
Z. F\"{u}redi, M. Simonovits,
The history of degenerate (bipartite) extremal graph problems,
Erd\"{o}s centennial, 169--264,
Bolyai Soc. Math. Stud., 25,
J\'{a}nos Bolyai Math. Soc., Budapest, 2013.

\bibitem {Guiduli-1996}
B.D. Guiduli, Spectral extrema for graphs, Ph.D. Thesis, 105 pages,
University of Chicago, December 1996.

\bibitem {Gyori-1991}
E. Gy\H{o}ri, On the number of edge disjoint cliques in graphs of given size, \emph{Combinatorica} \textbf{11} (1991), no. 3, 231--243.

\bibitem {Hou-2016}
X.M. Hou, Y. Qiu, B.Y. Liu, Extremal graph for intersecting odd cycles,
\emph{Electron. J. Combin.}
\textbf{23} (2016), no. 2, Paper 2.29, 9 pp.

\bibitem{Keevash}
P. Keevash, Hypergraph Tur\'an problems, {\em Surveys in combinatorics} {\bf 392} (2011), 83--140.

\bibitem {Lei-2024}
X.Y. Lei, S.C. Li, Spectral extremal problem on disjoint color-critical graphs,
\emph{Electron. J. Combin.} \textbf{31} (2024), no. 1, Paper No. 1.25,
19pp.



\bibitem {LP-2022}
Y.T. Li, Y.J. Peng, Refinement on spectral Tur\'{a}n's theorem,
\emph{SIAM J. Discrete Math.} \textbf{37} (2023), no. 4, 2462--2485.

\bibitem {Li-2022}
Y.T. Li, Y.J. Peng, The spectral radius of graphs
with no intersecting odd cycles,
\emph{Discrete Math.} \textbf{345}
(2022), no. 8, Paper No. 112907, 16 pp.

\bibitem {LinN-2021}
H.Q. Lin, B. Ning, A complete solution to
the Cvetkovi\'{c}-Rowlinson conjecture,
\emph{J. Graph Theory} \textbf{97} (2021), no. 3, 441--450.

\bibitem {Lin-2021}
H.Q. Lin, B. Ning, B. Wu, Eigenvalues and triangles in graphs,
\emph{Combin. Probab. Comput.} \textbf{30} (2021), no. 2, 258--270.


\bibitem {LIN-2022}
H.Q. Lin, M.Q Zhai, Y.H. Zhao, Spectral radius,
edge-disjoint cycles and cycles of the same length,
\emph{Electron. J. Combin.}
\textbf{29} (2022), no. 2, Paper No. 2.1, 26 pp.

\bibitem {LIU}
H. Liu, Extremal graphs for blow-ups of cycles and trees, 
\emph{Electron. J. combin.} \textbf{20} (2013),  no.1, Paper 65, 16pp.



\bibitem {NWK-2023}
Z.Y. Ni, J. Wang, L.Y. Kang, Spectral extremal graphs for disjoint cliques, \emph{Electron. J. Combin.}
\textbf{30} (2023), no. 1, Paper No. 1.20, 16 pp.

\bibitem {Nikiforov-2007}
V. Nikiforov, Bounds on graph eigenvalues II,
\emph{Linear Algebra Appl.}
\textbf{427} (2007), no. 2-3, 183--189.

\bibitem {Nikiforov-2009-2}
V. Nikiforov, Spectral saturation:
inverting the spectral Tur\'{a}n theorem,
\emph{Electron. J. Combin.} \textbf{16} (2009), no. 1,
Research Paper 33, 9 pp.

\bibitem {Nikiforov-2009}
V. Nikiforov, Stability for large forbidden subgraphs,
\emph{J. Graph Theory} \textbf{62} (2009), no. 4, 362--368.


\bibitem {Nikiforov-2010}
V. Nikiforov, The spectral radius of graphs
without paths and cycles of specified length,
\emph{Linear Algebra Appl.}
\textbf{432} (2010), no. 9, 2243--2256.




\bibitem{Ning-arxiv}
B. Ning, M.Q. Zhai, Counting substructures
and eigenvalues II: quadrilaterals,
arXiv:2112.15279.

\bibitem{Rowlinson-1988}
P. Rowlinson, On the maximal index of graphs
with a prescribed number of edges,
\emph{Linear Algebra Appl.} \textbf{110} (1988) 43--53.

\bibitem{Sidorenko}
A. Sidorenko, What we know and what we do not know about Tur\'an numbers, {\em Graphs and Combinatorics} 11.2 (1995), 179--199.

\bibitem {Simonovits-1968}
M. Simonovits, A method for solving extremal problems in graph theory,
stability problems, In: Theory of Graphs (Proceedings of the Colloquium,
Tihany, 1966), Academic Press, New York, 1968, pp. 279--319.


\bibitem {Simonovits-1983}
M. Simonovits, Extremal graph problems and graph products,  {\em Studies in Pure Mathematics: To the Memory of Paul Tur\'an}, (1983) 669--680.


\bibitem {Simonovits-1974}
M. Simonovits, Extremal graph problems
with symmetrical extremal graphs,
additional chromatic conditions, \emph{Discrete Math.}
\textbf{7} (1974) 349--376.

\bibitem{Simonovits-2019}
M. Simonovits, E. Szemer\'edi,
Embedding graphs into larger graphs: results, methods, and problems, In: {\em Building Bridges II: Mathematics of L\'aszl\'o Lov\'asz}, (2019) 445-592.

\bibitem {TAIT-2019}
M. Tait, The Colin de Verdi\`ere parameter, excluded minors,
and the spectral radius, \emph{J. Combin. Theory Ser. A}
\textbf{166} (2019) 42--58.

\bibitem {TAIT-2017}
M. Tait, J. Tobin, Three conjectures in extremal spectral graph theory,
\emph{J. Combin. Theory Ser. B}
\textbf{126} (2017) 137--161.

\bibitem {WANG-2023}
J. Wang, L.Y. Kang, Y.S. Xue, On a conjecture of spectral extremal problems,
\emph{J. Combin. Theory Ser. B} \textbf{159} (2023) 20--41.

\bibitem {WANG-arxiv}
J. Wang, Z.Y. Ni, L.Y. Kang, Y.-Z. Fan,
Spectral extremal graphs for edge blow-up of star forests,
arXiv:2306.16747v1.

\bibitem{Wu-2005}
B.F. Wu, E.L. Xiao, Y. Hong, The spectral radius of trees on
$k$ pendant vertices, \emph{Linear Algebra Appl.}
\textbf{395} (2005) 343--349.

\bibitem {Yuan-2022}
L.-T. Yuan, Extremal graphs for edge blow-up of graphs,
\emph{J. Combin. Theory Ser. B} \textbf{152}
(2022) 379--398.

\bibitem {Yuan-2021}
L.-T. Yuan, Extremal graphs for odd wheels,
\emph{J. Graph Theory} \textbf{98} (2021), no. 4, 691--707.

\bibitem {Yuan-2018}
L.-T. Yuan, Extremal graphs for the $k$-flower,
\emph{J. Graph Theory} \textbf{89} (2018), no. 1, 26--39.

\bibitem {Zhai-2023}
M.Q. Zhai, H.Q. Lin, A strengthening of the spectral
chromatic critical edge theorem: books and theta graphs,
\emph{J. Graph Theory}
\textbf{102} (2023), no. 3, 502--520.

\bibitem {Zhai-2022}
M.Q. Zhai, H.Q. Lin, Spectral extrema of $K_{s,t}$-minor free graphs
--on a conjecture of M. Tait, \emph{J. Combin. Theory Ser. B}
\textbf{157} (2022) 184--215.



\bibitem {ZHAI2022}
M.Q. Zhai, R.F. Liu, J. Xue,
A unique characterization of spectral extrema for friendship graphs,
\emph{Electron. J. Combin.} \textbf{29} (2022), no. 3,
Paper No. 3.32, 17 pp.
\end{thebibliography}
\end{document}